\title[On the Uniform Comp.\ Content of the Baire Category Theorem]
{On the Uniform Computational Content\\ of the Baire Category Theorem}
\author{Vasco Brattka}
\address{Faculty of Computer Science, Universit\"at der Bundeswehr M\"unchen, Germany and 
             Department of Mathematics \& Applied Mathematics, University of Cape Town, South Africa\footnote{Vasco Brattka has received funding from the National Research Foundation of South Africa and by the People Programme (Marie Curie Actions) of the European Union's Seventh Framework Programme FP7/2007-2013/ under REA grant agreement no PIRSES-GA-2011-294962-COMPUTAL.}} 
\email{Vasco.Brattka@cca-net.de}
\author{Matthew Hendtlass}
\address{University of Canterbury, Christchurch, New Zealand}
\email{Matthew.Hendtlass@canterbury.ac.nz}
\author{Alexander P.\ Kreuzer}
\address{Department of Mathematics, National University of Singapore, Singapore\footnote{Alexander P.\ Kreuzer is supported by the Ministry of Education of Singapore through grant R146-000-184-112 (MOE2013-T2-1-062).}}
\email{matkaps@posteo.de}
\def\AA{{\mathcal A}}
\def\CC{{\mathcal C}}
\def\IN{{\mathbb{N}}}
\def\IQ{{\mathbb{Q}}}
\def\IR{{\mathbb{R}}}
\def\TO{\Longrightarrow}
\def\In{\subseteq}
\def\into{\hookrightarrow}
\def\prefix{\sqsubseteq}
\def\mto{\rightrightarrows}
\def\id{{\rm id}}
\def\dom{{\rm dom}}
\def\range{{\rm range}}
\def\Baire{{\IN^\IN}}
\def\ll#1{\ell_{#1}}
\def\Tr{{\rm Tr}}
\newcommand{\SO}[1]{{{\bf\Sigma}^0_{#1}}}
\newcommand{\pO}[1]{{\Pi^0_{#1}}}
\def\LPO{\text{\rm\sffamily LPO}}
\def\LLPO{\text{\rm\sffamily LLPO}}
\def\WKL{\text{\rm\sffamily WKL}}
\def\BCT{\text{\rm\sffamily BCT}}
\def\DBCT{\text{\rm\sffamily DBCT}}
\def\BWT{\text{\rm\sffamily BWT}}
\def\C{\mbox{\rm\sffamily C}}
\def\LPO{\mbox{\rm\sffamily LPO}}
\def\LLPO{\mbox{\rm\sffamily LLPO}}
\def\Low{\text{\rm\sffamily L}}
\def\CL{\text{\rm\sffamily CL}}
\def\J{\text{\rm\sffamily J}}
\def\G{\text{\rm\sffamily G}}
\def\ACC{\text{\rm\sffamily ACC}}
\def\MLR{\text{\rm\sffamily MLR}}
\def\WWKL{\text{\rm\sffamily WWKL}}
\def\GEN{\text{\rm\sffamily GEN}}
\def\WGEN{\text{\rm\sffamily WGEN}}
\def\PC{\text{\rm\sffamily PC}}
\def\leqT{\mathop{\leq_{\mathrm{T}}}}
\def\leqW{\mathop{\leq_{\mathrm{W}}}}
\def\equivW{\mathop{\equiv_{\mathrm{W}}}}
\def\leqSW{\mathop{\leq_{\mathrm{sW}}}}
\def\equivSW{\mathop{\equiv_{\mathrm{sW}}}}
\def\nleqW{\mathop{\not\leq_{\mathrm{W}}}}
\def\nleqSW{\mathop{\not\leq_{\mathrm{sW}}}}
\def\lW{\mathop{<_{\mathrm{W}}}}
\def\lSW{\mathop{<_{\mathrm{sW}}}}
\def\nW{\mathop{|_{\mathrm{W}}}}
\def\nSW{\mathop{|_{\mathrm{sW}}}}
\def\bigtimes{\mathop{\mathsf{X}}}
\def\stars{*_{\rm s}\;\!}
\newcommand{\dash}{\mbox{-}}
\date{\today}
\newtheorem{theorem}{Theorem}[section]
\newtheorem{proposition}[theorem]{Proposition}
\newtheorem{lemma}[theorem]{Lemma}
\newtheorem{fact}[theorem]{Fact}
\newtheorem{corollary}[theorem]{Corollary}
\theoremstyle{definition}
\newtheorem{definition}[theorem]{Definition}
\newtheorem{example}[theorem]{Example}
\newtheorem{question}[theorem]{Question}
\begin{document}

\begin{abstract}
We study the uniform computational content of different versions of the Baire Category Theorem in the Weihrauch lattice.
The Baire Category Theorem can be seen as a 
pigeonhole principle that states that a complete (i.e., ``large'') metric
space cannot be decomposed into countably many nowhere dense (i.e., ``small'')
pieces. 
The Baire Category Theorem is an illuminating example of a theorem that can be used
to demonstrate that one classical theorem can have several different
computational interpretations. 
For one, we distinguish two different
logical versions of the theorem, where one can be seen as the
contrapositive form of the other one. 
The first version aims to find an uncovered point in the space, given a sequence of
nowhere dense closed sets.
The second version aims to find the index of a closed set that is somewhere dense, given a sequence of closed sets that cover the space. 
Even though the two statements behind these versions
are equivalent to each other in classical logic, they are not equivalent in intuitionistic
logic and likewise they exhibit different computational behavior 
in the Weihrauch lattice.
Besides this logical distinction, we also consider different ways how the
sequence of closed sets is ``given''. Essentially, we can distinguish between
positive and negative information on closed sets. 
We discuss all the four resulting versions of the Baire Category Theorem.
Somewhat surprisingly it turns out that the difference in providing the input information
can also be expressed with the jump operation.
Finally, we also relate the Baire Category Theorem to notions of 
genericity and computably comeager sets.
\  \bigskip \\
{\bf Keywords:} Computable analysis, Weihrauch lattice, Baire category, genericity, reverse mathematics.
\end{abstract}

\maketitle

%\begin{footnotesize}
\setcounter{tocdepth}{1}
\tableofcontents
%\end{footnotesize}

\pagebreak

\section{Introduction}
\label{sec:introduction}

The classical Baire Category Theorem is an important tool that is used to prove many other theorems in mathematics. 
It can be seen as a Pigeonhole Principle that states that a complete (i.e., ``large'') metric space cannot be decomposed in countably many
nowhere dense (i.e., ``small'') pieces.

\begin{theorem}[Baire Category Theorem]
\label{thm:BCT}
A complete metric space $X$ cannot be obtained as a countable union $X=\bigcup_{i=0}^\infty A_i$
of nowhere dense closed sets $A_i\In X$.
\end{theorem}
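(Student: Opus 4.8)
The plan is to argue by contradiction, reconstructing Baire's classical nested-ball argument. Suppose, towards a contradiction, that $X$ is nonempty (the empty space being trivial) and that $X=\bigcup_{i=0}^\infty A_i$ with each $A_i$ closed and nowhere dense. Passing to complements, set $U_i:=X\sm A_i$. Since $A_i$ is closed with empty interior, each $U_i$ is open and dense in $X$. It therefore suffices to exhibit a single point $x\in\bigcap_{i=0}^\infty U_i$, for such an $x$ lies in no $A_i$ and thus contradicts $X=\bigcup_i A_i$.

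First I would build, by recursion, a nested sequence of closed balls whose radii tend to $0$. Using that $U_0$ is dense and open, choose $x_0\in U_0$ and $0<r_0\le 1$ with $\overline{B(x_0,r_0)}\In U_0$. Given $x_n$ and $r_n$, the open ball $B(x_n,r_n)$ is nonempty and $U_{n+1}$ is dense, so $B(x_n,r_n)\cap U_{n+1}$ is a nonempty open set; I pick $x_{n+1}$ in it together with a radius $0<r_{n+1}\le r_n/2$ small enough that $\overline{B(x_{n+1},r_{n+1})}\In B(x_n,r_n)\cap U_{n+1}$. This yields $\overline{B(x_0,r_0)}\supseteq\overline{B(x_1,r_1)}\supseteq\cdots$ with $r_n\le 2^{-n}$.

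Finally I would invoke completeness. For $m\ge n$ all $x_m$ lie in $\overline{B(x_n,r_n)}$, so $d(x_m,x_n)\le 2r_n\to 0$ and $(x_n)_n$ is Cauchy; let $x$ be its limit. Each $\overline{B(x_n,r_n)}$ is closed and contains the tail of the sequence, hence contains $x$, so $x\in\overline{B(x_n,r_n)}\In U_n$ for every $n$. Thus $x\in\bigcap_i U_i$, the desired contradiction.

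The argument is short, and I expect the only genuinely delicate points to be bookkeeping rather than conceptual: one must make the radius choices uniformly small (both $r_{n+1}\le r_n/2$ and the closure containment) so that the centers are forced to be Cauchy, and the single essential appeal to the hypothesis is completeness, which guarantees that the nested closures with vanishing radii meet in a point. From the viewpoint of this paper I would also note that the recursion relies on a form of dependent choice in selecting $(x_{n+1},r_{n+1})$ at each stage, and that it is precisely the packaging of these choices, together with the way the sets $A_i$ are presented, that the later Weihrauch-lattice analysis is designed to make precise.
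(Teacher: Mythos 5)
Your proof is correct: it is the canonical nested-ball argument, with the radius control ($r_{n+1}\le r_n/2$ and the closure containments $\overline{B(x_{n+1},r_{n+1})}\In B(x_n,r_n)\cap U_{n+1}$) handled properly so that completeness yields a point of $\bigcap_i U_i$. Note that the paper states this classical theorem as background only and gives no proof of its own (its contribution concerns the Weihrauch-lattice versions $\BCT_0$--$\BCT_3$), so there is nothing to compare against; your argument is exactly the standard one that would be expected, and your closing remark about where dependent choice and the presentation of the $A_i$ enter is indeed the point the paper's uniform analysis makes precise.
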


We recall that a set $A\In X$ is called {\em nowhere dense} if its interior $A^\circ$ is empty.
Otherwise it is called {\em somewhere dense}. Obviously, a closed set is nowhere dense if and only
if its complement is a dense open set. In a slightly stronger version expressed for open sets the theorem reads as follows.

\begin{theorem}[Baire Category Theorem]
\label{thm:BCT-strong}
Let $X$ be a complete metric space. If $(U_n)_n$ is a sequence of dense open subsets $U_n\In X$,
then $\bigcap_{i=0}^\infty U_i$ is also dense in $X$.
\end{theorem}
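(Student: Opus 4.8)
The plan is to prove density directly via the standard nested-ball argument, using completeness to extract a single witnessing point. To show that $\bigcap_{i=0}^\infty U_i$ is dense, it suffices to prove that it meets every nonempty open set $W\In X$. So I would fix such a $W$ and aim to construct one point lying simultaneously in $W$ and in all of the $U_n$, obtaining it as the limit of a cleverly chosen sequence.

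First I would build a nested sequence of shrinking \emph{closed} balls by recursion. Since $U_0$ is dense and open, $W\cap U_0$ is nonempty and open, so I can choose a center $x_0$ and a radius $0<r_0<1$ with $\overline{B}(x_0,r_0)\In W\cap U_0$. Given $\overline{B}(x_n,r_n)$, density and openness of $U_{n+1}$ make $B(x_n,r_n)\cap U_{n+1}$ nonempty and open, so I can pick $x_{n+1}$ and $0<r_{n+1}<r_n/2$ with $\overline{B}(x_{n+1},r_{n+1})\In B(x_n,r_n)\cap U_{n+1}$. This yields nested closed balls $\overline{B}(x_0,r_0)\supseteq\overline{B}(x_1,r_1)\supseteq\cdots$ with $r_n<2^{-n}$.

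Next I would invoke completeness. Because the balls are nested with radii tending to zero, the centers form a Cauchy sequence: for $m\geq n$ we have $x_m\in\overline{B}(x_n,r_n)$, so $d(x_m,x_n)\leq r_n\to 0$. By completeness $(x_n)$ converges to some $x\in X$. Letting $m\to\infty$ in $d(x_m,x_n)\leq r_n$ and using that closed balls are closed, I obtain $x\in\overline{B}(x_n,r_n)$ for every $n$. Hence $x\in\overline{B}(x_n,r_n)\In U_n$ for all $n\geq 1$, and also $x\in\overline{B}(x_0,r_0)\In W\cap U_0$. Therefore $x\in W\cap\bigcap_{i=0}^\infty U_i$, so this intersection is nonempty, and since $W$ was arbitrary it is dense.

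The one step requiring genuine care — and where completeness is indispensable — is the passage to the limit: I must take the balls \emph{closed} and force their radii to shrink to zero, so that the single limit point is trapped inside every $U_n$ at once rather than merely approaching each of them. The remainder is routine, though one should verify the radius bookkeeping (e.g.\ $r_{n+1}<r_n/2$) to guarantee both the nesting of the closed balls and the Cauchy property of the centers.
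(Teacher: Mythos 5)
Your proof is correct: it is the standard nested-closed-ball argument, with the two delicate points (taking the balls closed, and forcing the radii to tend to zero so that completeness traps the limit point in every $U_n$ simultaneously) handled properly. Note that the paper states this theorem only as classical background in the introduction and gives no proof of it at all — its contribution concerns the computational content of the various formalizations, not the classical statement — so your argument is simply the standard textbook proof, and it is sound.
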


We recall that a set is called {\em meager} if it can be written as a countable union of nowhere dense sets
and it is called {\em comeager} if it is the complement of a meager set (i.e., if it contains a countable intersection
of dense open sets).

There are two natural logical ways of writing the Baire Category Theorem~\ref{thm:BCT} as a for-all-exists statement:
one which claims the existence of a point $x\in X$, the other one which claims the existence
of a natural number index $i\in\IN:=\{0,1,2,...\}$.

\begin{itemize}
\item[($X$)] For every sequence $(A_i)_{i\in\IN}$ of nowhere dense closed sets $A_i\In X$
               there exists a point $x\in X\setminus\bigcup_{i=0}^\infty A_i$.
\item[($\IN$)] For every sequence $(A_i)_{i\in\IN}$ of closed sets $A_i\In X$ such that
              $X=\bigcup_{i=0}^\infty A_i$, there exists an index $i\in\IN$ such that $A_i$ is somewhere dense.
\end{itemize}

Secondly, we can represent the closed sets $A\In X$ either with the negative information representation $\psi_-$
or the positive information representation $\psi_+$. We denote the corresponding hyperspaces of closed subsets by
$\AA_-(X)$ and $\AA_+(X)$, respectively (see Section~\ref{sec:closed} for more details).
 This yields four different versions of the Baire Category Theorem that are summarized in the following table:

\begin{figure}[htb]
\begin{center}
\begin{tabular}{c|cc}
              & $X$          & $\IN$ \\\hline
$-$  & $\BCT_0$ & $\BCT_1$\\
$+$ & $\BCT_2$ & $\BCT_3$
\end{tabular}
\end{center}
\end{figure}

We call $\BCT_1$ and $\BCT_3$ the {\em discrete} versions of the Baire Category Theorem,
since the output is an index. We now give precise definitions of these operations (the notations used will be explained in the next section). 

\begin{definition}[Baire Category Theorem]
Let $X$ be a computable Polish space. We introduce the following operations:
\begin{enumerate}
\item $\BCT_{0,X}:\In\AA_-(X)^\IN\mto X$, $\BCT_{2,X}:\In\AA_+(X)^\IN\mto X$ with\smallskip
        \begin{itemize}
        \item $\BCT_{0,X}(A_i):=\BCT_{2,X}(A_i):=X\setminus\bigcup_{i=0}^\infty A_i$,
        \item $\dom(\BCT_{0,X}):=\dom(\BCT_{2,X}):=\{(A_i):(\forall i)\;A_i^\circ=\emptyset\}$,\smallskip
        \end{itemize}
\item $\BCT_{1,X}:\In\AA_-(X)^\IN\mto\IN$, $\BCT_{3,X}:\In\AA_+(X)^\IN\mto\IN$ with\smallskip
        \begin{itemize}
        \item $\BCT_{1,X}(A_i):=\BCT_{3,X}(A_i):=\{i\in\IN:A_i^\circ\not=\emptyset\}$ and
        \item $\dom(\BCT_{1,X}):=\dom(\BCT_{3,X}):=\{(A_i):X=\bigcup_{i=0}^\infty A_i\}$.
        \end{itemize}
\end{enumerate}
\end{definition}

We should mention that the exact location of these operations in the Weihrauch lattice does depend
on the underlying space $X$. For ease of notation we will typically omit the space $X$ in the notation of $\BCT_i$, 
but we often explicitly mention the space that we are using. 

We offer the following interpretations of the four forms of Baire's Category Theorem. 

\begin{itemize}
\item $\BCT_0$ can be seen
 as the constructive Baire Category Theorem, which can be used to construct
        all sorts of computable (counter)examples \cite{Bra01a,Bra07}.
\item $\BCT_1$ can be seen as the functional analytic Baire Category Theorem, whose computational content
        is equivalent to that of many basic theorems of functional analysis, such as the Banach Inverse Mapping Theorem \cite{BG11a}.
\item $\BCT_2$ is a computability theoretic version of the Baire Category Theorem, which is closely related to the notion of $1$--genericity (see Section~\ref{sec:genericity}).
\item $\BCT_3$ is a combinatorial version of the Baire Category Theorem, which (for perfect spaces $X$) is computationally equivalent 
        to the cluster point problem of the natural numbers, as we will show below in Theorem~\ref{thm:jumps}.
\end{itemize}

We mention that analogs of $\BCT_0$ and $\BCT_2$ have already been studied in reverse mathematics 
under the names B.C.T.I and B.C.T.II \cite{BS93a} (see also \cite{Sim14}).
Another version of the Baire Category Theorem appeared in reverse mathematics under the name 
$\pO{1}\G$, which stands for $\pO{1}$--genericity \cite[Page~5823]{HSS09}.
In Section~\ref{sec:Pi01-genericity} we will show that $\pO{1}\G$ is equivalent to $\BCT_2$.

Our goal in this paper is to study the uniform computational content of the Baire Category Theorem in the 
Weihrauch lattice. This study can be seen as a continuation of \cite{BHK15} and we refer the reader to
this source for all undefined notions. 

We briefly mention what is already known on the Baire Category Theorem in the Weihrauch lattice.
In \cite[Theorem~6]{Bra01a} it has been proved that $\BCT_0$ is computable and in \cite[Theorem~5.2]{BG11a} it has been proved
that for non-trivial spaces $\BCT_1$ is equivalent to discrete choice $\C_\IN$ and hence complete for the class of functions that 
are computable with finitely many mind changes \cite[Theorem~7.11]{BBP12}. We summarize these results.

\begin{fact}
\label{fact:BCT0-BCT1}
Let $X$ be a computable Polish space. Then $\BCT_{0}$ is computable
and $\BCT_{1}\equivSW\C_\IN$ is computable with finitely many mind changes.
\end{fact}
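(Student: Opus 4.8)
The statement has two independent parts, and the plan is to treat them separately.

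First, to show that $\BCT_0$ is computable I would run the standard effective nested-ball (Baire) construction. Fix a computable point $x_0$ and a starting rational ball $B_0=B(x_0,1)$, and recall that a $\psi_-$-name of a closed set $A_i$ is exactly an enumeration of all rational balls contained in the open complement $U_i:=X\setminus A_i$. I would build a computable fast Cauchy sequence of rational balls $B_0\supseteq B_1\supseteq\cdots$, with the radius of $B_{n+1}$ below $2^{-(n+1)}$, such that $\overline{B_{n+1}}\subseteq B(j)$ for some ball $B(j)$ enumerated in the name of $A_n$ (so that $B(j)\cap A_n=\emptyset$) and $\overline{B_{n+1}}\subseteq B_n$. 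At stage $n$ one dovetails over the enumerated complement-balls of $A_n$ and over rational sub-balls of $B_n$, accepting the first candidate meeting the radius and inclusion conditions; since $A_n$ is nowhere dense, $U_n$ is dense, so $U_n\cap B_n$ is a nonempty open set covered by the enumerated balls, which guarantees that a suitable witness exists and is eventually found. The limit $x=\lim_n\text{center}(B_n)$ is computable and lies in $\overline{B_{n+1}}\subseteq B(j)\subseteq U_n$ for every $n$, hence $x\in X\setminus\bigcup_n A_n$, so $\BCT_0$ is computable.

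For the reduction $\BCT_1\leqSW\C_\IN$ I would turn the same construction into a backtracking search with finitely many mind changes, restricted to a fixed ball $B_0$. Maintain a stack $B_0\supseteq B_1\supseteq\cdots\supseteq B_\ell$ in which $\overline{B_{j+1}}$ is witnessed inside the complement of $A_j$ exactly as above, and output the current level $\ell$ as the guess. Whenever a sub-ball of $B_\ell$ dodging $A_\ell$ is found, push it and raise the guess to $\ell+1$; this is the only way the guess changes, so the guesses form a nondecreasing stream. Since $\bigcup_n A_n=X$, the search cannot descend forever, as its limit point would avoid every $A_n$; so the level stabilises at some $L$ after finitely many mind changes. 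The key lemma is that the stable level is correct: if $A_L$ were nowhere dense then $U_L$ would be dense, so $U_L\cap B_L$ is nonempty open and hence covered by an enumerated complement-ball meeting $B_L$, producing a further descent and contradicting stability; therefore $A_L^\circ\neq\emptyset$ and $L\in S:=\{i:A_i^\circ\ne\emptyset\}$. This exhibits $\BCT_1$ as computable with finitely many mind changes, and by the cited completeness of $\C_\IN$ for that class \cite{BBP12} it yields $\BCT_1\leqSW\C_\IN$, with the final guess read off without re-inspecting the input.

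For the converse $\C_\IN\leqSW\BCT_1$ I would exploit that negative information only ever \emph{shrinks} a closed set. Assuming $X$ is non-trivial, it contains a rational ball, which I treat as a copy of $[0,1]$ carrying a fixed point $p$. Given a $\C_\IN$-instance presented as an enumeration of $F=\IN\setminus S$ with $S\ne\emptyset$, define names for sets $A_n$ by the online rule: as long as $n$ has not yet entered $F$, reveal no complement-ball, so the set named so far is still all of $X$; from the stage at which $n$ enters $F$ onwards, enumerate every rational ball avoiding $p$, so that the named set collapses to the singleton $\{p\}$. Then $A_n=X$ when $n\in S$ and $A_n=\{p\}$ is nowhere dense when $n\notin S$; in particular $\bigcup_n A_n=X$ because $S\ne\emptyset$, and the somewhere-dense indices are exactly $S$. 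Applying $\BCT_1$ returns some $i$ with $A_i^\circ\ne\emptyset$, hence $i\in S$, i.e.\ $i\notin F$, which is a valid $\C_\IN$-answer; the output map is the identity, so the reduction is strong. Together with the previous paragraph this gives $\BCT_1\equivSW\C_\IN$.

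The two load-bearing ideas, and the places where I expect the difficulty, are the following. (i) The witnessing convention in $\BCT_0$ and $\BCT_1$: because $X$ need not be locally compact, ``dodging $A_n$'' must be certified by a \emph{single} enumerated complement-ball, and the proof that failure to dodge forces $A_L^\circ\ne\emptyset$ depends on precisely this convention. (ii) The solution set $S$ is only $\SO{2}$-definable from negative information, so there is no direct co-c.e.\ choice set to hand to $\C_\IN$; the backtracking-with-mind-changes search is exactly the device that converts the undetectable ``getting stuck'' into the nondecreasing, finitely-often-revised guess that $\C_\IN$ computes. Verifying that the encoding in the converse really maintains the covering hypothesis while keeping every excluded index nowhere dense --- possible only because a fat set like $[0,1]$ is never a countable union of nowhere dense closed sets, so at least one survivor must carry the whole space --- is the remaining point that needs care.
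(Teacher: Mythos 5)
The paper itself offers no proof of this Fact: it is a summary of \cite[Theorem~6]{Bra01a}, \cite[Theorem~5.2]{BG11a} and \cite[Theorem~7.11]{BBP12}, together with the remark that $\C_\IN\leqSW\BCT_1$ is ``easy to see''. Your reconstruction follows the same architecture as those sources, and two of its three parts are sound: the nested-ball construction for $\BCT_0$ is correct, and so is your backtracking search showing that $\BCT_1$ is computable with finitely many mind changes (including the two verification claims that an infinite descent would contradict the covering hypothesis, and that stability at level $L$ forces $A_L^\circ\neq\emptyset$). But two steps fail as written. First, in the reduction $\C_\IN\leqSW\BCT_1$: a computable Polish space contains no ``copy of $[0,1]$'' in general, and what your singleton encoding actually needs is a computable \emph{non-isolated} point, which need not exist either. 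The Fact is asserted for every computable Polish space, and for $X=\IN$ with the discrete metric (a legitimate computable Polish space, used repeatedly in the paper) every singleton has nonempty interior; so your sets $A_n=\{p\}$ for excluded indices $n$ are somewhere dense, and $\BCT_1$ may legitimately answer with an excluded index. The repair is simple and restores full generality: once $n$ is enumerated into $F=\IN\setminus S$, enumerate \emph{all} rational balls, so that $A_n=\emptyset$. The empty set is nowhere dense in every space, the covering $\bigcup_n A_n=X$ still holds because $S\neq\emptyset$ forces $A_n=X$ for some $n$, and then $\{i:A_i^\circ\neq\emptyset\}=S$ exactly. (Your closing remark that the covering is maintained ``because $[0,1]$ is not a countable union of nowhere dense closed sets'' is also off target: the covering holds by construction, not by a category argument.)

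Second, the strong reduction $\BCT_1\leqSW\C_\IN$ is not delivered by what you cite. \cite[Theorem~7.11]{BBP12} characterizes \emph{ordinary} Weihrauch reducibility: finitely many mind changes yields $\BCT_1\leqW\C_\IN$, where the outer functional needs the original input in order to recompute the guess at the stage returned by $\C_\IN$. Your parenthetical ``with the final guess read off without re-inspecting the input'' conceals the actual difficulty: the natural input-free instance --- enumerate the abandoned levels, handing $\C_\IN$ the set $\{L,L+1,L+2,\dots\}$ --- does not work, since $\C_\IN$ may return any $n\geq L$, and $A_n$ for $n>L$ need not be somewhere dense. To obtain strongness, hand $\C_\IN$ instead the set of all pairs $\langle\ell,s\rangle$ such that your search is at level $\ell$ at stage $s$ and never pushes at any later stage: this set is co-c.e.\ (its complement is c.e.), it is nonempty, and every element has first coordinate equal to the stable level $L$, so the output map $\langle\ell,s\rangle\mapsto\ell$ needs no access to the input. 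With these two repairs your proof is complete and matches the content of the cited results.
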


In fact, in \cite[Theorem~5.2]{BG11a} only $\C_\IN\leqW\BCT_1$ and $\BCT_1\leqSW\C_\IN$ was proved.
But it is easy to see that also $\C_\IN\leqSW\BCT_1$ holds. 

In Section~\ref{sec:preliminaries} we introduce some basic concepts related to the Weihrauch lattice.
In Section~\ref{sec:closed} we discuss different representation of the hyperspace of closed subsets.
In particular, we introduce the spaces $\AA_-(X)$ and $\AA_+(X)$ of closed subsets represented by negative and positive information respectively. 
We prove that the jump of $\AA_-(X)$ can be described with the cluster point representation.
This enables us to prove in Section~\ref{sec:cluster-boundary} that (for perfect Polish spaces)
\[\BCT_0'\equivSW\BCT_2\mbox{ and }\BCT_1'\equivSW\BCT_3.\]
In other words, the change of the input space from $\AA_-(X)$ to $\AA_+(X)$ can equivalently be expressed
by an application of the jump. This is somewhat surprising and simplifies the picture, because we are essentially
left with the versions $\BCT_0$ and $\BCT_1$ of the Baire Category Theorem up to jumps. 
In Section~\ref{sec:parallelizability} we prove that $\BCT_0$ and $\BCT_2$ are parallelizable
and in Section~\ref{sec:perfect-Polish} we prove that 
for any two computable perfect Polish spaces $\BCT_0$ yields the same strong equivalence class.
This holds also for $\BCT_2$ and the mentioned type of spaces includes Cantor space and Baire space.
In Section~\ref{sec:dense} we prove that the seemingly stronger version of the Baire Category Theorem
expressed in Theorem~\ref{thm:BCT-strong} is not actually stronger in terms of its computational content. 
In Section~\ref{sec:Pi01-genericity} we prove
\[\pO{1}\G\equivSW\BCT_2\]
and in Section~\ref{sec:genericity} we study the problem of $1$--genericity $1\dash\GEN$.
Among other things we prove that $1\dash\GEN$ lies between $\BCT_0$ and $\BCT_2$, i.e.,
\[\BCT_0\leqSW1\dash\GEN\leqSW\BCT_2.\]
We also prove that $\lim_\J$ is an upper bound on $\BCT_2$ ($\lim_\J$ is the limit operation with respect to the jump topology). 
Additionally, we study effective versions of comeager sets related to $\BCT_0,\BCT_2$ and $\BCT_0'$.
Finally, in Section~\ref{sec:BCT-probabilistic} we discuss probabilistic aspects of the Baire Category Theorem. 
Among other things we prove a uniform version of the Theorem of Kurtz that states that
\[1\dash\GEN\leqSW(1-*)\dash\WWKL,\]
i.e., $1\dash\GEN$ is reducible to a certain variant of Weak Weak K\H{o}nig's Lemma. 
On the other hand, we prove that there is a co-c.e.\ comeager set (i.e., one of the effective type that corresponds to $\BCT_2$)
without points that are low for $\Omega$. Using this result we can separate $1\dash\GEN$ and $\BCT_2$.

\section{Preliminaries}
\label{sec:preliminaries}

In this section we give a brief introduction into the Weihrauch lattice and we provide some basic notions from
probability theory. 

\subsection*{Pairing Functions}

We are going to use some standard pairing functions in the following that we briefly summarize.
As usual, we denote by 
$\langle n,k\rangle :=\frac{1}{2}(n+k+1)(n+k)+k$
the Cantor pair of two natural numbers $n,k\in\IN$ and by 
$\langle p,q\rangle(n):=p(k)$ if $n=2k$ and $\langle p,q\rangle(n)=q(k)$, if $n=2k+1$,
the pairing of two sequences $p,q\in\IN^\IN$. By 
$\langle k,p\rangle(n):=kp$ 
we denote the natural pairing of a number $k\in\IN$ with a sequence $p\in\IN^\IN$.
We also define a pairing function
$\langle p_0,p_1\rangle:=\langle\langle p_0(0),p_1(0)\rangle,\langle \overline{p_0},\overline{p_1}\rangle\rangle$,
for $p_0,p_1\in\IN\times2^\IN$, where $\overline{p_i}(n)=p_i(n+1)$.
Finally, we use the pairing function
$\langle p_0,p_1,p_2,...\rangle\langle i,j\rangle:=p_i(j)$
for $p_i\in\IN^\IN$.

\subsection*{The Weihrauch Lattice}

The original definition of Weihrauch reducibility is due to Klaus Weihrauch
and has been studied for many years \cite{Ste89,Wei92a,Wei92c,Her96,Bra99,Bra05}.
More recently it has been noticed that a certain variant of this reducibility yields
a lattice that is very suitable for the classification of the computational content of mathematical theorems
\cite{GM09,Pau10,Pau10a,BG11,BG11a,BBP12,BGM12}. The basic reference for all notions
from computable analysis is Weihrauch's textbook \cite{Wei00}.
The Weihrauch lattice is a lattice of multi-valued functions on represented
spaces. 

A {\em representation} $\delta$ of a set $X$ is just a surjective partial
map $\delta:\In\IN^\IN\to X$. In this situation we call $(X,\delta)$ a {\em represented space}.
In general we use the symbol ``$\In$'' in order to indicate that a function is potentially partial.
We work with partial multi-valued functions $f:\In X\mto Y$ where $f(x)\In Y$ denotes the set of possible
values upon input $x\in\dom(f)$. If $f$ is single-valued, then for the sake of simplicity we identify $f(x)$ with its unique inhabitant.
We denote the {\em composition} of
two (multi-valued) functions $f:\In X\mto Y$ and $g:\In Y\mto Z$ either by $g\circ f$ or by $gf$.
It is defined by 
\[g\circ f(x):=\{z\in Z:(\exists y\in Y)(z\in g(y)\mbox{ and }y\in f(x))\},\]
where $\dom(g\circ f):=\{x\in X:f(x)\In\dom(g)\}$.
Using represented spaces we can define the concept of a realizer. 

\begin{definition}[Realizer]
Let $f : \In (X, \delta_X) \mto (Y, \delta_Y)$ be a multi-valued function on represented spaces.
A function $F:\In\IN^\IN\to\IN^\IN$ is called a {\em realizer} of $f$, in symbols $F\vdash f$, if
$\delta_YF(p)\in f\delta_X(p)$ for all $p\in\dom(f\delta_X)$.
\end{definition}

Realizers allow us to transfer the notions of computability
and continuity and other notions available for Baire space to any represented space;
a function between represented spaces will be called {\em computable} if it has a computable realizer, etc.
Now we can define Weihrauch reducibility.

\begin{definition}[Weihrauch reducibility]
Let $f,g$ be multi-valued functions on represented spaces. 
Then $f$ is said to be {\em Weihrauch reducible} to $g$, in symbols $f\leqW g$, if there are computable
functions $K,H:\In\IN^\IN\to\IN^\IN$ such that $H\langle \id, GK \rangle \vdash f$ for all $G \vdash g$.
Moreover, $f$ is said to be {\em strongly Weihrauch reducible} to $g$, in symbols $f\leqSW g$,
if an analogous condition holds, but with the property $HGK\vdash f$ in place of  $H\langle \id, GK \rangle \vdash f$.
\end{definition}

The difference between ordinary and strong Weihrauch reducibility is that the ``output modifier'' $H$ has
direct access to the original input in case of ordinary Weihrauch reducibility, but not in case of strong Weihrauch reducibility. 
There are algebraic and other reasons to consider ordinary Weihrauch reducibility as the more natural variant. 
For instance, one can characterize the reduction $f\leqW g$ as follows: $f\leqW g$ holds if and only if a Turing machine can compute $f$ in such a way that
it evaluates the ``oracle'' $g$ exactly on one (usually infinite) input during the course of its computation \cite[Theorem~7.2]{TW11}.
We will use the strong variant $\leqSW$ of Weihrauch reducibility mostly for technical purposes; for instance
it is better suited to study jumps (since jumps are monotone with respect to strong reductions but in general not for ordinary reductions).

We note that the relations $\leqW$, $\leqSW$ and $\vdash$ implicitly refer to the underlying representations, which
we will only mention explicitly if necessary. It is known that these relations only depend on the underlying equivalence
classes of representations and not on the specific representatives (see Lemma~2.11 in \cite{BG11}).
The relations $\leqW$ and $\leqSW$ are reflexive and transitive, thus they induce corresponding partial orders on the sets of 
their equivalence classes (which we refer to as {\em Weihrauch degrees} and {\em strong Weihrauch degrees} respectively).
These partial orders will be denoted by $\leqW$ and $\leqSW$ as well. The induced lattice and semi-lattice, respectively, are distributive
(for~details~see~\cite{Pau10a}~and~\cite{BG11}).
We use $\equivW$ and $\equivSW$ to denote the respective equivalences regarding $\leqW$ and $\leqSW$, 
by $\lW$ and $\lSW$ we denote strict reducibility and by $\nW,\nSW$ we denote incomparability in the respective sense.

\subsection*{The Algebraic Structure}

The partially ordered structures induced by the two variants of Weihrauch reducibility are equipped with a number of useful algebraic operations that we summarize in the next definition.
We use $X\times Y$ to denote the ordinary set-theoretic {\em product}, $X\sqcup Y:=(\{0\}\times X)\cup(\{1\}\times Y)$ 
to denote {\em disjoint sums} or {\em coproducts}, and by $\bigsqcup_{i=0}^\infty X_i:=\bigcup_{i=0}^\infty(\{i\}\times X_i)$ we denote the 
{\em infinite coproduct}. By $X^i$ we denote the $i$--fold product of a set $X$ with itself, where $X^0=\{()\}$ is some canonical singleton.
By $X^*:=\bigsqcup_{i=0}^\infty X^i$ we denote the set of all {\em finite sequences over $X$}
and by $X^\IN$ the set of all {\em infinite sequences over $X$}. 
All these constructions have parallel canonical constructions on representations and the corresponding representations
are denoted by $[\delta_X,\delta_Y]$ for the product of $(X,\delta_X)$ and $(Y,\delta_Y)$, and
by $\delta_X^n$ for the $n$--fold product of $(X,\delta_X)$ with itself, where $n\in\IN$ and $\delta_X^0$ is a representation of the one-point set $\{()\}=\{\varepsilon\}$.
By $\delta_X\sqcup\delta_Y$ we denote the representation of the coproduct, by $\delta^*_X$ the representation of $X^*$ and by $\delta_X^\IN$ the representation
of $X^\IN$. For instance, $(\delta_X\sqcup\delta_Y)$ can be defined by $(\delta_X\sqcup\delta_Y)\langle n,p\rangle:=(0,\delta_X(p))$
if $n=0$ and $(\delta_X\sqcup\delta_Y)\langle n,p\rangle:=(1,\delta_Y(p))$ otherwise.
Likewise, $\delta^*_X\langle n,p\rangle:=(n,\delta_X^n(p))$.
See \cite{Wei00} or \cite{BG11,Pau10a,BBP12} for details of the definitions of the other representations. We will always assume that these canonical representations
are used, if not mentioned otherwise. 

\begin{definition}[Algebraic operations]
\label{def:algebraic-operations}
Let $f:\In X\mto Y$ and $g:\In Z\mto W$ be multi-valued functions. Then we define
the following operations:
\begin{enumerate}
\itemsep 0.2cm
\item $f\times g:\In X\times Z\mto Y\times W, (f\times g)(x,z):=f(x)\times g(z)$ \hfill (product)
\item $f\sqcap g:\In X\times Z\mto Y\sqcup W, (f\sqcap g)(x,z):=f(x)\sqcup g(z)$ \hfill (sum)
\item $f\sqcup g:\In X\sqcup Z\mto Y\sqcup W$, with $(f\sqcup g)(0,x):=\{0\}\times f(x)$ and\\
        $(f\sqcup g)(1,z):=\{1\}\times g(z)$ \hfill (coproduct)
\item $f^*:\In X^*\mto Y^*,f^*(i,x):=\{i\}\times f^i(x)$ \hfill (finite parallelization)
\item $\widehat{f}:\In X^\IN\mto Y^\IN,\widehat{f}(x_n):=\bigtimes\limits_{i\in\IN} f(x_i)$ \hfill (parallelization)
\end{enumerate}
\end{definition}

In this definition and in general we denote by $f^i:\In X^i\mto Y^i$ the $i$--th fold product
of the multi-valued map $f$ with itself ($f^0$ is the constant function on the canonical singleton).
It is known that $f\sqcap g$ is the {\em infimum} of $f$ and $g$ with respect to both strong and
ordinary Weihrauch reducibility (see \cite{BG11}, where this operation was denoted by $\oplus$).
Correspondingly, $f\sqcup g$ is known to be the {\em supremum} of $f$ and $g$ with respect to ordinary Weihrauch reducibility $\leqW$ \cite{Pau10a}.
This turns the partially ordered structure of Weihrauch degrees (induced by $\leqW$) into a lattice,
which we call the {\em Weihrauch lattice}.
The two operations $f\mapsto\widehat{f}$ and $f\mapsto f^*$ are known to be closure operators
in this lattice \cite{BG11,Pau10a}.

There is some useful terminology related to these algebraic operations. 
We say that $f$ is a {\em a cylinder} if $f\equivSW\id\times f$ where $\id:\Baire\to\Baire$ always
denotes the identity on Baire space, if not mentioned otherwise. 
For a cylinder $f$ and any $g$ the reduction $g\leqW f$ is equivalent to $g\leqSW f$ \cite{BG11}.
We say that $f$ is {\em idempotent} if $f\equivW f\times f$ and {\em strongly idempotent}, if $f\equivSW f\times f$. 
We say that a multi-valued function on represented spaces is {\em pointed}, if it has a computable
point in its domain. For pointed $f$ and $g$ we obtain $f\sqcup g\leqSW f\times g$. 
The properties of pointedness and idempotency are both preserved under
equivalence and hence they can be considered as properties of the respective degrees.
For a pointed $f$ the finite parallelization $f^*$ can also be considered as {\em idempotent closure} since idempotency is equivalent to $f\equivW f^*$ in this case.
We call $f$ {\em parallelizable} if $f\equivW\widehat{f}$ and it is easy to see that $\widehat{f}$ is always idempotent.
Analogously, we call $f$ {\em strongly parallelizable} if $f\equivSW\widehat{f}$.

\subsection*{Compositional Products}

While the Weihrauch lattice is not complete, some suprema and some infima exist in general.
The following result was proved by the first author and Pauly in \cite{BP16} and ensures the existence of certain important maxima and minima.

\begin{proposition}[Compositional products]
Let $f,g$ be multi-valued functions on represented spaces. Then the following Weihrauch degrees exist:
\begin{itemize}
\item[] $f *g:=\max\{f_0\circ g_0:f_0\leqW f\mbox{ and }g_0\leqW g\}$ \hfill (compositional product)
\end{itemize}
\end{proposition}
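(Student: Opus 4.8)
The plan is to prove that the set of Weihrauch degrees $S:=\{[f_0\circ g_0]:f_0\leqW f\AND g_0\leqW g\}$ has a greatest element, by exhibiting a single multi-valued function $h$ that is simultaneously of the form $f_0\circ g_0$ with $f_0\leqW f$, $g_0\leqW g$ (so $[h]\in S$) and an upper bound for all of $S$. Since a maximum, if it exists, is a unique degree, this establishes that $f*g$ is well defined. As the whole set $S$ depends only on the degrees $[f],[g]$ (the class $\{f_0:f_0\leqW f\}$ depends only on $[f]$), we may assume without loss of generality that $f,g:\In\IN^\IN\mto\IN^\IN$ are problems on Baire space under the identity representation, so that a realizer $F\vdash f$ is simply a choice function with $F(p)\in f(p)$ for $p\in\dom(f)$. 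Fix a standard enumeration $(\Phi_e)_{e\in\IN}$ of the partial computable functionals $\Phi_e:\In\IN^\IN\to\IN^\IN$, for which $(e,q)\mapsto\Phi_e(q)$ is computable.

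The candidate $h:\In\IN^\IN\mto\IN^\IN$ is defined on inputs of the form $\langle e,a,p\rangle$, consisting of a finite program index $e$, an infinite advice string $a$, and a $g$-input $p$, by
\[ h(\langle e,a,p\rangle):=\{\langle v,y\rangle : v\in g(p)\AND y\in f(\Phi_e(\langle a,\langle p,v\rangle\rangle))\}, \]
with domain those $\langle e,a,p\rangle$ for which $p\in\dom(g)$ and $\Phi_e(\langle a,\langle p,v\rangle\rangle)$ is defined and lies in $\dom(f)$ for every $v\in g(p)$. Informally, $h$ applies $g$ to $p$, feeds the result $v$ together with the original input and the advice through the $e$-th functional to manufacture an $f$-input $r$, applies $f$ to $r$, and finally echoes $v$ alongside the $f$-output. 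Then $[h]\in S$: set $g_0(\langle e,a,p\rangle):=\{\langle e,a,p,v,\Phi_e(\langle a,\langle p,v\rangle\rangle)\rangle:v\in g(p)\}$ and $f_0(\langle e,a,p,v,r\rangle):=\{\langle v,y\rangle:y\in f(r)\}$. Each makes a single oracle call (to $g$ on $p$, respectively to $f$ on $r$) surrounded by computable pre- and post-processing, so $g_0\leqW g$, $f_0\leqW f$, and a direct unwinding of domains gives $h=f_0\circ g_0$.

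It remains to verify that $h$ is an upper bound. Suppose $f_0'\leqW f$ via computable $K_1,H_1$ (so that $s\mapsto H_1\langle s,FK_1(s)\rangle$ realizes $f_0'$ for every $F\vdash f$) and $g_0'\leqW g$ via $K_2,H_2$. Let $e_0$ be a fixed index for the computable functional $\langle a,\langle p,v\rangle\rangle\mapsto K_1(H_2\langle a,v\rangle)$ and reduce $f_0'\circ g_0'\leqW h$ by
\[ K(t):=\langle e_0,t,K_2(t)\rangle,\qquad H(t,\langle v,y\rangle):=H_1\langle H_2\langle t,v\rangle,y\rangle. \]
On input $K(t)$ the function $h$ calls $g$ on $K_2(t)$, which is precisely the call made inside $g_0'$; it obtains some $v\in g(K_2(t))$, forms $r=\Phi_{e_0}(\langle t,\langle K_2(t),v\rangle\rangle)=K_1(H_2\langle t,v\rangle)=K_1(s)$ with $s:=H_2\langle t,v\rangle$ the intermediate $g_0'$-name, applies $f$ to obtain some $y\in f(K_1(s))$, and returns $\langle v,y\rangle$. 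Since $v\in g(K_2(t))$ we have $s\in g_0'(t)$, and since $y\in f(K_1(s))$ we get $H_1\langle s,y\rangle\in f_0'(s)$; hence $H(t,\langle v,y\rangle)=H_1\langle s,y\rangle\in f_0'(g_0'(t))$, as required, and this holds for every realizer of $h$.

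The two devices in the last step are exactly where the difficulty lies. A naive attempt using only a finite index $e$ fails, because the required middle transformation $v\mapsto K_1(H_2\langle t,v\rangle)$ depends on the \emph{infinite} input $t$ and cannot be folded into a program number; this is resolved by equipping $h$ with the infinite advice slot $a$, set to $a:=t$, together with a universal functional, so that the fixed glue $K_1,H_2$ is captured by the single constant index $e_0$. Secondly, the output modifier $H$ must reconstruct the intermediate name $s=H_2\langle t,v\rangle$ but has no access to the $g$-output $v$, so we build $h$ to copy $v$ into its own output. The remaining domain and uniformity checks are routine, and together they show that the displayed maximum exists.
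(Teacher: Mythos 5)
The paper does not actually prove this proposition; it is quoted as a result of Brattka and Pauly \cite{BP16}, so there is no in-paper argument to compare against line by line. Your proof is correct, and it reconstructs essentially the argument from the cited source: pass to realizer versions on Baire space, and exhibit a single ``universal'' composition that both lies in the set $\{f_0\circ g_0 : f_0\leqW f,\ g_0\leqW g\}$ and bounds it, using a universal Turing functional $\Phi_e$ for the middle glue. Your two ``devices'' are exactly the cylindrification trick that makes the known proof work: in \cite{BP16} the canonical representative is built from the cylinders $\id\times g$ and $\id\times f$ rather than $g$ and $f$ themselves, and the $\id$-components there play precisely the roles of your infinite advice slot $a$ (carrying the original input past the $g$-call, so that the input-dependent glue $v\mapsto K_1(H_2\langle t,v\rangle)$ can be encoded by one fixed index $e_0$) and of your echoed coordinate $v$ (carrying the intermediate name past the $f$-call so the outer post-processor can reconstruct $s=H_2\langle t,v\rangle$). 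The domain bookkeeping and the appeal to the fact that every $v\in g(K_2(t))$ is attained by some realizer are handled correctly, so the argument is complete; the only cosmetic gap is that $\dom(g_0)$ is left implicit, but the intended choice is clear and makes all three verifications ($g_0\leqW g$, $f_0\leqW f$, $h=f_0\circ g_0$) go through.
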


Here $f*g$ is defined over all $f_0\leqW f$ and $g_0\leqW g$ which can actually be composed (i.e., the target space of $g_0$ and the source space of $f_0$ have to coincide).
In this way $f*g$ characterizes the most complicated Weihrauch degree that can be obtained by first performing a computation with the help of $g$ and then another one with the help of $f$.
Since $f*g$ is a maximum in the Weihrauch lattice, we can consider $f*g$ as some fixed representative of the corresponding degree.
It is easy to see that $f\times g\leqW f*g$ holds. 
We can also define the {\em strong compositional product} by 
\[f\stars g:=\sup\{f_0\circ g_0:f_0\leqSW f\mbox{ and }g_0\leqSW g\},\]
but we neither claim that it exists in general nor that it is a maximum. 
The compositional products were originally introduced in \cite{BGM12}.

\subsection*{Jumps}

In \cite{BGM12}  {\em jumps} or {\em derivatives} $f'$ of multi-valued functions $f$ on represented spaces were introduced.
The {\em jump} $f':\In (X,\delta_X')\mto (Y,\delta_Y)$ of a multi-valued function $f:\In (X,\delta_X)\mto (Y,\delta_Y)$ on represented
spaces is obtained by replacing the input representation $\delta_X$ by its jump $\delta'_X:=\delta_X\circ\lim$, where
\[\lim:\In\IN^\IN\to\IN^\IN,\langle p_0,p_1,p_2,...\rangle\mapsto\lim_{n\to\infty}p_n\] 
is the limit operation on Baire space $\IN^\IN$ with respect to the product topology on $\IN^\IN$. It follows that $f'\equivSW f\stars\lim$
\cite[Corollary~5.16]{BGM12}. By $f^{(n)}$ we denote the $n$--fold jump. 
A $\delta_X'$--name $p$ of a point $x\in X$ is a sequence that converges to a $\delta_X$--name of $x$.
This means that a $\delta_X'$--name typically contains significantly less accessible information on $x$ than a $\delta_X$--name. 
Hence $f'$ is typically harder to compute than $f$, since less input information is available for $f'$.

The jump operation $f\mapsto f'$ plays a similar role in the Weihrauch lattice as the Turing jump operation
does in the Turing semi-lattice. In a certain sense $f'$ is a version of $f$ on the ``next higher'' level of complexity
(which can be made precise using the Borel hierarchy \cite{BGM12}).
It was proved in \cite{BGM12} that the jump operation $f\mapsto f'$ is monotone with respect to strong Weihrauch 
reducibility $\leqSW$, but not with respect to ordinary Weihrauch reducibility $\leqW$. This is another reason
why it is beneficial to extend the study of the Weihrauch lattice to strong Weihrauch reducibility.

\section{Representations of Closed Subsets}
\label{sec:closed}

In this section we will introduce and discuss some representations of the hyperspace $\AA(X)$ of closed
subsets. Mostly, we are interested in the case of computable metric spaces $X$.
We recall that $(X,d,\alpha)$ is called a {\em computable metric space}, if $(X,d)$ is a metric space,
$\alpha:\IN\to X$ is a sequence that is dense in $(X,d)$ and 
$d\circ(\alpha\times\alpha):\IN^2\to\IR$ is computable. 
In particular, every computable metric space is separable and non-empty.  
A {\em computable Polish space} is just a computable metric space that is additionally complete.
The {\em Cauchy representation} $\delta_X$ of a computable metric space is defined by
\[\delta_X(p):=\lim_{n\to\infty}\alpha p(n),\]
where $\dom(\delta_X)$ contains all $p\in\IN^\IN$ such that $(\alpha p(n))_n$ converges and such that 
$(\forall k)(\forall n\geq k)\;d(\alpha p(n),\alpha p(k))<2^{-k}$. 

Occasionally we will use the {\em coproduct} $X\sqcup\{\infty\}$ of a computable metric space $(X,d)$ with
some additional point $\infty$ of infinity. This point has distance $1$ to all other points and hence it is an isolated point such that ``$x=\infty$'' is decidable. 
The point of infinity  is associated to the space in order to have a ``dummy point'' that indicates ``no information''.

By $(\AA_-(X),\psi_-)$ we denote the hyperspace $\AA_-(X)$ of closed subsets of a computable metric
space $X$ with respect to negative information. More precisely, the representation $\psi_-$ of $\AA_-(X)$ can be defined by 

\[\psi_-(p):=X\setminus\bigcup_{i=0}^\infty B_{p(i)},\]

\noindent
where $(B_n)_n$ denotes a standard enumeration of the rational open balls, which can be defined by
\[B_{\langle n,k\rangle}:=B(\alpha(n),\overline{k}),\]
where $\overline{k}$ denotes the $k$--th rational number in some standard enumeration of $\IQ$.
There are many other equivalent ways of describing this representation \cite{BP03} and also versions
for more general spaces than metric spaces \cite{Sch02c}.
In case of the metric space of natural numbers $\IN$ equipped with the discrete metric, one can 
consider a name $p$ with respect to $\psi_-$ just as an enumeration of the complement of the represented set $A$.\footnote{It is known that $\psi_-$ is admissible with respect to the upper Fell topology (which corresponds to the Scott topology on the hyperspace of open subsets) \cite{BP03}.}

By $(\AA_+(X),\psi_+)$ we denote the hyperspace $\AA_+(X)$ of closed subsets of a computable metric
space $X$ with respect to positive information. 
For a subset $A\In X$ of a topological space $X$ we denote by $\overline{A}$ the {\em closure} of $A$.
The representation $\psi_+$ of $\AA_+(X)$ can be defined
by (for some sequence $(x_n)$)
\[\psi_+(p)=A:\iff\delta_{X\sqcup\{\infty\}}^\IN(p)=(x_n)_n\mbox{ and }\overline{\{x_n:n\in\IN\}}\cap X=A.\]
We note that the point $\infty$ of infinity is added to $X$ only in order to include the possibility to represent the empty set $A=\emptyset$. 
We point out that there are more general versions of the representation $\psi_+$ and the one given here is equivalent
to other natural versions only for computable Polish spaces $X$ (see \cite{BP03} for more details).\footnote{If $X$ is a Polish space, then the representation $\psi_+$ is known to be admissible with respect to the lower Fell topology \cite{BP03}.}
In case of the metric space of natural numbers $\IN$ equipped with the discrete metric, one can 
consider a name $p$ with respect to $\psi_+$ just as an enumeration of the represented set.

With the help of $\AA_-(X)$ we can introduce the {\em closed choice problem} $\C_X$.

\begin{definition}[Closed Choice]
Let $X$ be a computable metric space. The {\em closed choice problem} 
of the space $X$ is defined by
\[\C_X:\In\AA_-(X)\mto X,A\mapsto A\]
with $\dom(\C_X):=\{A\in\AA_-(X):A\not=\emptyset\}$.
\end{definition}

Intuitively, a realizer of $\C_X$ takes as input a non-empty closed set in negative description (i.e., given by $\psi_-$) 
and it produces an arbitrary point of this set as output.
Hence, $A\mapsto A$ means that the multi-valued map $\C_X$ maps
the input $A\in\AA_-(X)$ to the set $A\In X$ as a set of possible outputs.

Besides the closed choice problem we also consider the {\em cluster point problem} $\CL_X$, which we define next.

\begin{definition}[Cluster point problem]
Let $X$ be a computable metric space. The {\em cluster point problem} 
of the space $X$ is defined by
\[\CL_X:\In X^\IN\mto X,(x_n)_n\mapsto\{x\in X:\mbox{$x$ is a cluster point of $(x_n)_n$}\},\]
where $\dom(\CL_X)$ contains all sequences $(x_n)_n$ that have a cluster point.
\end{definition}

In \cite[Theorem~9.4]{BGM12} the following fact was proved.

\begin{fact}
\label{fact:cluster}
$\C_X'\equivSW\CL_X$ for every computable metric space $X$.
\end{fact}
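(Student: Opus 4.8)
The plan is to establish the strong equivalence by proving the two strong reductions $\CL_X\leqSW\C_X'$ and $\C_X'\leqSW\CL_X$ separately, in each case converting a name of the input object of one problem into a name of the input object of the other while passing the output through unchanged (so that the output modifier $H$ is the identity and the reductions are automatically strong). The guiding observation is that the cluster set of a sequence can be written as $C=\bigcap_{N}\overline{\{x_m:m\geq N\}}$, so that passing from $(x_n)_n$ to $C$ should correspond precisely to passing from negative information to its jump. In effect I would show that the cluster point representation is a concrete realization of $\psi_-'$, after which $\CL_X$ and $\C_X'$ become closed choice for the same hyperspace under two equivalent representations.

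For $\CL_X\leqSW\C_X'$ I would, given a name of a sequence $(x_n)_n$ possessing a cluster point, compute a $\psi_-'$-name of $C$ as follows. A $\psi_-'$-name is a sequence $\langle q_0,q_1,q_2,\dots\rangle$ with $q_k\to q_\infty$ pointwise and $q_\infty$ a $\psi_-$-name. I index the slots of the $q_k$ by pairs $\langle j,N\rangle$ and set $q_k(\langle j,N\rangle)$ to be the index of the ball $B_j$ as long as no $m$ with $N\le m\le k$ satisfies $x_m\in B_j$, and the index of a fixed empty ball otherwise. Since $x_m\in B_j$ is semidecidable and only a finite window is inspected, each $q_k$ is computable, and as $k\to\infty$ the slot $\langle j,N\rangle$ stabilizes to $B_j$ exactly when $x_m\notin B_j$ for all $m\ge N$ (a $\pO{1}$ condition) and to the empty ball otherwise. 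Thus $q_\infty$ lists exactly those $B_j$ that are disjoint from some tail $\{x_m:m\ge N\}$, and the union of these balls is $\bigcup_N\bigl(X\setminus\overline{\{x_m:m\ge N\}}\bigr)=X\setminus C$, whence $\psi_-(q_\infty)=C$. The key point, resolving the naive worry that ``$B_j\cap C=\emptyset$'' is far too complex to decide, is that one never decides disjointness from $C$ at all: one only enumerates the $\Sigma^0_2$ family of balls disjoint from some tail, which already covers $X\setminus C$. Since $C\neq\emptyset$ it lies in $\dom(\C_X')$, and the point returned by any $G\vdash\C_X'$ is a cluster point, so $HGK\vdash\CL_X$ with $H$ the identity.

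For the converse $\C_X'\leqSW\CL_X$ I would, given a $\psi_-'$-name $\langle q_0,q_1,\dots\rangle$ with $q_k\to q_\infty$ and $\psi_-(q_\infty)=A\neq\emptyset$, build a sequence $(y_n)_n$ whose cluster set is $A$ and hand it to $\CL_X$. Using the dense sequence $\alpha$, I would run an \emph{emit-and-retract} construction over requirements indexed by pairs $(i,l)$: the point $\alpha(i)$ is emitted at precision $2^{-l}$ while the ball $B(\alpha(i),2^{-l})$ still appears to meet $A$ according to the current provisional cover $\bigcup_{m\le k}B_{q_k(m)}$, and emission of that requirement is suspended once the ball is seen to lie inside a covering ball. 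A point $z\notin A$ lies in some ball $B_{q_\infty(m_0)}$ whose slot stabilizes, so all requirements witnessing points near $z$ are eventually suspended and $z$ is not a cluster point; conversely every $a\in A$ lies in no stable covering ball, so requirements approximating $a$ fire cofinally and $a$ is a cluster point. Then $\CL_X$ returns a cluster point, which lies in $A$, solving $\C_X'$ with $H$ again the identity.

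The hard part will be this last construction: the covering balls arrive only in the limit, so at finite stages transient balls can spuriously cover neighbourhoods of points of $A$, and I must guarantee that requirements attached to genuine points of $A$ nevertheless fire \emph{cofinally} (so that $A$ lies in the cluster set and, in particular, the cluster set is non-empty) while points outside $A$ are suppressed after finitely many steps. Since $X$ need not be locally compact, I cannot appeal to compactness of balls and must instead organize the emissions by a priority-style bookkeeping that only ``trusts'' a covering ball at slot $m$ after it has been observed stable for sufficiently long, ensuring the simultaneity over the infinitely many slots that the argument requires. Once this bookkeeping is set up, both $K$ and $H$ are plainly computable and the output modifiers ignore the original input, so the two reductions are strong and yield $\C_X'\equivSW\CL_X$.
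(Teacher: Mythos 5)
Your overall plan---realize the cluster-point representation as the jump $\psi_-'$ of the negative-information representation and pass outputs through unchanged---is exactly the route the paper takes (it cites \cite[Theorem~9.4]{BGM12} for this fact and derives it from the representation equivalence $\psi_*\equiv\psi_-'$ of Proposition~\ref{prop:closed-sets}). Your first direction, $\CL_X\leqSW\C_X'$, is correct: listing in the limit those basic balls that are disjoint from some tail $\{x_m:m\geq N\}$ gives a $\psi_-'$-name of the cluster set, since an open ball disjoint from a tail is disjoint from the tail's closure, and conversely every point outside the cluster set lies in such a ball; the only detail to make explicit is that at stage $k$ you must use a $k$-step-bounded semidecision of ``$x_m\in B_j$'' so that each $q_k$ is total and computable (this does not change the pointwise limit).

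The second direction, $\C_X'\leqSW\CL_X$, has a genuine gap, and it is not the one you flag as ``the hard part'': your emit/suspend rule is unsound even when the input name is constant (all $q_k=q_\infty$), i.e., even with no transient covering balls whatsoever. You suspend requirement $(i,l)$ only once $B(\alpha(i),2^{-l})$ is seen to lie inside a covering ball; but if that ball genuinely meets $A$, it lies inside no covering ball, so the requirement fires forever and $\alpha(i)$ becomes a cluster point regardless of whether $\alpha(i)\in A$. Concretely, take $X=\IR$, $A=\{0\}$, and a fixed enumeration of rational balls whose union is $\IR\setminus\{0\}$: for every $\alpha(i)$ with $|\alpha(i)|<1$, the ball $B(\alpha(i),1)$ contains $0$, so requirement $(i,0)$ is never suspended, $\alpha(i)$ is emitted cofinally, and the cluster set of your sequence contains all dense-sequence points in $(-1,1)$; a realizer of $\CL_X$ may then return such a point, which does not solve $\C_X'(A)=A$. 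The defect is that ``my $2^{-l}$-ball meets $A$'' is a property of the ball, not of its center, whereas soundness (cluster set $\In A$) forces you to emit only points that are themselves not yet seen to be covered by the approximating cover. Once you do that, the real difficulties appear: points of $A$ can be transiently covered by not-yet-stabilized slots, and the complement of finitely many balls, though it contains $A\not=\emptyset$, may contain no point $\alpha(i)$ at all, so the search for an uncovered dense-sequence point need not terminate. This is what the priority construction of \cite[Theorem~9.4]{BGM12} (producing outputs $\alpha(h(s,n))$ that avoid the first $n$ balls of the stage-$s$ approximation) is designed to resolve, and as it stands your sketch does not establish $\C_X'\leqSW\CL_X$.
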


To translate positive information into negative information is not computable in general.
However, in \cite[Proposition~4.2]{BG09} it was proved that positive information $\psi_+$ on closed sets
can be translated into negative information $\psi_-$ with a limit computable function. We can express this
result as follows.

\begin{fact}
\label{fact:+-}
The identity $\id_{+-}:\AA_+(X)\to\AA_-(X),A\mapsto A$ is strongly Weihrauch reducible to $\lim$,
i.e., $\id_{+-}\leqSW\lim$, for every computable metric space $X$.
\end{fact}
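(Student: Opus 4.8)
The plan is to prove $\id_{+-}\leqSW\lim$ directly, by exhibiting computable maps $K$ and $H$ such that $H\circ G\circ K\vdash\id_{+-}$ for every $G\vdash\lim$; since the output phase will need no access to the original input, a \emph{strong} reduction results. The conceptual core is the following observation: a rational open ball $B_j$ is contained in the complement $X\setminus A$ if and only if $B_j\cap A=\emptyset$, and since $A=\overline{\{x_n:x_n\in X\}}$ is the closure of the genuine points supplied by a $\psi_+$-name $(x_n)_n$ and $B_j$ is open, we have $B_j\cap A=\emptyset$ if and only if $x_n\notin B_j$ for every $n$. Thus the negative information we must produce — an enumeration of balls exhausting $X\setminus A$ — is governed by a $\PO{1}$ predicate in the positive data, and this is exactly the type of information that a single application of $\lim$ can recover.

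Concretely, first I would note that from a $\psi_+$-name one can semi-decide ``$x_n\in B_j$'': writing $B_j=B(\alpha(m),\overline k)$, membership amounts to the c.e.\ condition $d(x_n,\alpha(m))<\overline k$, which is confirmable from a Cauchy name of $x_n$ whenever it holds (and the dummy values $x_n=\infty$, being at distance $1$ from every point of $X$ and hence in no ball $B_j\In X$, simply never trigger confirmation, which is correct since they do not contribute to $A$). The map $K$ would then output $\langle p_0,p_1,\dots\rangle$, where $p_s$ records, for each index $j$, a guess $c_j^s\in\{0,1\}$ set to $1$ unless a dovetailed search has already confirmed $x_n\in B_j$ for some $n\le s$ within $s$ steps, in which case it is $0$.

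The key step is to verify that each guess stabilizes: for fixed $j$, either some $x_n$ genuinely lies in $B_j$, so $c_j^s$ drops to $0$ at the confirming stage and stays there, or no $x_n$ lies in $B_j$, so $c_j^s=1$ for all $s$. Hence $\lim_s c_j^s=c_j$ exists pointwise, the sequence $\langle p_0,p_1,\dots\rangle$ lies in the domain of $\lim$, and its limit is the characteristic sequence of $J:=\{j:B_j\cap A=\emptyset\}$. The output map $H$ reads this limit and enumerates $J$, producing a $\psi_-$-name. It then remains to confirm correctness, namely $\bigcup_{j\in J}B_j=X\setminus A$: the inclusion $\subseteq$ is immediate from $c_j=1\Rightarrow B_j\cap A=\emptyset$, and $\supseteq$ follows because $X\setminus A$ is open, so every one of its points lies in some rational ball disjoint from $A$, whose index is in $J$.

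I expect the only delicate point to be bookkeeping rather than mathematics: one must run the membership searches for all pairs $(n,j)$ in a dovetailed fashion and argue that the resulting double sequence of guesses is genuinely computable and monotone in $s$ for each fixed $j$, so that $\lim$ applies. The topological facts — openness of $B_j$ reducing closure-membership to sequence-membership, and the covering of the open set $X\setminus A$ by rational balls disjoint from $A$ — are standard for computable metric spaces.
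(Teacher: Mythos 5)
Your proposal is correct, and it is genuinely more self-contained than what the paper does: the paper gives no proof of this fact at all, but quotes it from \cite[Proposition~4.2]{BG09} (positive information on closed sets translates into negative information by a limit computable function) and then upgrades the reduction from $\leqW$ to $\leqSW$ purely formally, using that $\lim$ is a cylinder. Your direct construction---using $\lim$ to decide, for each rational ball $B_j$, the $\pO{1}$ predicate ``no $x_n$ lies in $B_j$'', which by openness of $B_j$ is equivalent to $B_j\cap A=\emptyset$, and then enumerating all indices of balls disjoint from $A$---is essentially the standard proof of the cited result. What your route buys is that the strongness of the reduction falls out of the construction itself (the output phase only reads the limit, never the original input), rather than being inferred from the cylinder property; what the paper's route buys is brevity and a reusable formal principle ($f\leqW g\iff f\leqSW g$ for cylinders $g$).

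One detail needs repair. Your parenthetical claim that the dummy values $x_n=\infty$ ``never trigger confirmation'' because $\infty$ has distance $1$ to every point of $X$ is false for balls of radius $\overline{k}>1$: the naive test $d(x_n,\alpha(m))<\overline{k}$ does fire on $\infty$ in that case, so the set of indices your algorithm stabilizes to can be a proper subset of $J=\{j:B_j\cap A=\emptyset\}$. This is harmless, for either of two reasons, but one of them should be said explicitly: (i) since ``$x=\infty$'' is decidable in $X\sqcup\{\infty\}$, restrict the confirmation test to those $x_n\not=\infty$, which makes the computed set equal to $J$ and your verification goes through verbatim; or (ii) keep the naive test and note that every point of $X\setminus A$ lies in some rational ball of radius less than $1$ that is disjoint from $A$, and such balls are never falsely confirmed, so the union of the enumerated balls is still exactly $X\setminus A$. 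Finally, note that the enumeration produced by $H$ must be infinite to be a valid $\psi_-$-name; this holds because the indices of empty balls (non-positive radius) always belong to the computed set, so no padding issue arises even when $A=X$.
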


We mention that the fact that the reduction is strong directly follows from the fact that $\lim$ is a cylinder.
We will also use the {\em jump} $\psi_-'$ of the representation $\psi_-$
and we denote the corresponding hyperspace by $(\AA_-(X)',\psi_-')$.
Fact~\ref{fact:+-} implies $\psi_+\leq\psi_-'$, i.e., the identity $\id:\AA_+(X)\to\AA_-(X)'$ is computable. 
We note that Facts~\ref{fact:+-}, \ref{fact:cluster} and \ref{fact:BCT0-BCT1} immediately yield upper bounds on $\BCT_2$ and $\BCT_3$.

\begin{proposition}
\label{prop:BCT-upper-bound}
$\BCT_2\leqSW\BCT_0'\leqSW\lim$ and $\BCT_3\leqSW\BCT_1'\equivSW\CL_\IN$ for every computable Polish space $X$.
\end{proposition}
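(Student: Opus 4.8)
\textbf{The plan} is to combine the three cited facts in a modular way, treating the two claimed chains of reductions independently. The key observation is that all the work has already been done in the preceding facts; what remains is to assemble them using monotonicity of the jump with respect to $\leqSW$ and the characterization $f'\equivSW f\stars\lim$.

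\textbf{First I would handle the chain} $\BCT_2\leqSW\BCT_0'\leqSW\lim$. For the reduction $\BCT_2\leqSW\BCT_0'$, recall that $\BCT_0$ and $\BCT_2$ have the same assignment on sets (both output $X\setminus\bigcup_i A_i$) and the same domain condition (all $A_i$ nowhere dense); the only difference is that $\BCT_2$ takes its input from $\AA_+(X)^\IN$ while $\BCT_0$ takes it from $\AA_-(X)^\IN$. By Fact~\ref{fact:+-} we have $\id_{+-}\leqSW\lim$, and the remark following it records that $\psi_+\leq\psi_-'$, i.e.\ the identity $\AA_+(X)\to\AA_-(X)'$ is computable. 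Since $\BCT_0'$ is by definition $\BCT_0$ with input representation $\psi_-'$ (lifted sequence-wise), and a $\psi_+$-name of each closed set is computably a $\psi_-'$-name, the identity $\BCT_2\to\BCT_0'$ is witnessed by computable $K$ (applying this identity coordinatewise on the sequence) and computable $H$ (the identity on the output space $X$). This gives $\BCT_2\leqSW\BCT_0'$. For the second inequality $\BCT_0'\leqSW\lim$, I would use Fact~\ref{fact:BCT0-BCT1}, which states that $\BCT_0$ is computable. Since computable functions are $\leqSW$-below any pointed problem, and in particular $\BCT_0\leqSW\lim$, monotonicity of the jump under $\leqSW$ yields $\BCT_0'\leqSW\lim'$. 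To close the gap I would invoke the standard fact that $\lim$ is idempotent with respect to the jump, i.e.\ $\lim'\equivSW\lim$ (equivalently, $\lim\stars\lim\equivSW\lim$), so $\BCT_0'\leqSW\lim$.

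\textbf{For the second chain} $\BCT_3\leqSW\BCT_1'\equivSW\CL_\IN$, I would proceed analogously. The reduction $\BCT_3\leqSW\BCT_1'$ is entirely parallel to the first case: $\BCT_1$ and $\BCT_3$ agree as set-assignments (both output the index set $\{i:A_i^\circ\neq\emptyset\}$) and agree on their domain condition ($X=\bigcup_i A_i$), differing only in whether the input sequence lies in $\AA_-(X)^\IN$ or $\AA_+(X)^\IN$; again the identity $\AA_+(X)\to\AA_-(X)'$ is computable, giving $\BCT_3\leqSW\BCT_1'$ via coordinatewise $K$ and identity $H$ on $\IN$. The equivalence $\BCT_1'\equivSW\CL_\IN$ then follows by chaining Fact~\ref{fact:BCT0-BCT1}, which gives $\BCT_1\equivSW\C_\IN$, with Fact~\ref{fact:cluster}, which gives $\C_\IN'\equivSW\CL_\IN$: since the jump is monotone under $\equivSW$, from $\BCT_1\equivSW\C_\IN$ we obtain $\BCT_1'\equivSW\C_\IN'\equivSW\CL_\IN$.

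\textbf{The main obstacle} is the bookkeeping in the two ``change-of-representation'' reductions $\BCT_2\leqSW\BCT_0'$ and $\BCT_3\leqSW\BCT_1'$, where one must be careful that the coordinatewise lifting of the computable identity $\AA_+(X)\to\AA_-(X)'$ to sequences is compatible with the way the jump $\BCT_0'$ acts on a $\psi_-'$-name of a \emph{sequence} of closed sets, and that the domain conditions (nowhere density, resp.\ covering) are preserved under the identity on the underlying sets. I would spell this out by noting that a $\psi_-'$-name of $(A_i)_i$ is precisely a sequence converging in $\IN^\IN$ to a $\psi_-$-name of $(A_i)_i$, so that a $\psi_+$-name feeds in directly after applying the computable translation; the domain conditions are stated purely in terms of the sets $A_i$ and are therefore unaffected by the change of name. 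Everything else reduces to invoking the named facts, so the proof is short once this compatibility is checked.
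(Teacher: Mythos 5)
Most of your assembly matches what the paper intends (it gives no explicit proof, noting only that Facts~\ref{fact:BCT0-BCT1}, \ref{fact:cluster} and \ref{fact:+-} ``immediately yield'' the bounds): the translations $\BCT_2\leqSW\BCT_0'$ and $\BCT_3\leqSW\BCT_1'$ via the computable identity $\AA_+(X)\to\AA_-(X)'$ lifted coordinatewise (using $[\delta^\IN]'\equiv(\delta')^\IN$), and $\BCT_1'\equivSW\C_\IN'\equivSW\CL_\IN$ by monotonicity of the jump under $\leqSW$ together with Fact~\ref{fact:cluster}, are all fine. However, your argument for $\BCT_0'\leqSW\lim$ contains a genuine error: the claim $\lim'\equivSW\lim$ (equivalently, $\lim\stars\lim\equivSW\lim$) is false. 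The jump of $\lim$ is strictly harder than $\lim$: the hierarchy $\lim^{(n)}$ is strictly increasing, with $\lim$ complete for effectively $\SO{2}$--measurable functions and $\lim'\equivSW\lim\circ\lim$ complete for effectively $\SO{3}$--measurable ones. If $\lim$ were closed under the jump, the paper's own statements would collapse: we would get $\CL_\IN\equivSW\C_\IN'\leqSW\lim'\leqSW\lim$, contradicting the remark preceding Corollary~\ref{cor:dichotomy} that not even $\CL_{\{0,1\}}$ is $\SO{2}$--measurable. So your chain $\BCT_0\leqSW\lim$, hence $\BCT_0'\leqSW\lim'$, does not close the gap.

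The repair is to use the computability of $\BCT_0$ (Fact~\ref{fact:BCT0-BCT1}) directly, rather than only the weaker consequence $\BCT_0\leqSW\lim$. By definition of the jump, a name for an input of $\BCT_0'$ is a sequence converging to a name for an input of $\BCT_0$; so one application of $\lim$ followed by the computable realizer of $\BCT_0$ realizes $\BCT_0'$. This is literally a strong reduction $\BCT_0'\leqSW\lim$: take $K=\id$ and let $H$ be the computable realizer of $\BCT_0$. Abstractly, for any computable $f$ one has $f'\equivSW f\stars\lim\leqSW\lim$, because post-composition with a computable function keeps a problem strongly below $\lim$ (the jump of a computable problem is always $\leqSW\lim$, whereas the jump of a problem that is merely $\leqSW\lim$ need not be). A further minor point: your justification ``computable functions are $\leqSW$-below any pointed problem'' is not true for strong reducibility (it fails, e.g., when the target admits constant realizers, since $H$ has no access to the original input); it is harmless here only because $\lim$ is a cylinder.
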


It is convenient for us to describe the representation $\psi_-'$ in a different way.
For this purpose we introduce the {\em cluster point representation} $\psi_*$ of the set $\AA_*(X)$ of closed subsets $A\In X$.
This representation $\psi_*$ represents closed sets as the sets of cluster points of sequences in $X$.
We define 
\[\psi_*(p)=A:\iff\delta_{X\sqcup\{\infty\}}^\IN(p)=(x_n)_n\mbox{ and }\CL_{X\sqcup\{\infty\}}(x_n)_n\cap X=A.\]
Similarly as in case of $\psi_+$ we only use the point of infinity $\infty$ here to allow for a name of the empty set $A=\emptyset$.
Now \cite[Corollary~9.5]{BGM12} can be interpreted such that $\psi_*$ is equivalent to $\psi_-'$.
However, strictly speaking this has only been proved for non-empty sets $A$ and hence we need to discuss
a suitable extension of the proof that includes the empty set $A$.

\begin{proposition}
\label{prop:closed-sets}
Let $X$ be a computable metric space. Then the identity map $\id:\AA_*(X)\to\AA_-(X)'$ is a computable isomorphism, 
i.e., $\id$ as well as its inverse are computable. In other words, $\psi_*\equiv\psi_-'$.
\end{proposition}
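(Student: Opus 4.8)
The claim only concerns two \emph{representations} of the same underlying set of closed subsets, so it suffices to show that the identity is computable in both directions, i.e.\ that $\psi_*\le\psi_-'$ and $\psi_-'\le\psi_*$. For \emph{non-empty} closed sets this is precisely the content of \cite[Corollary~9.5]{BGM12} (equivalently, of the equivalence $\C_X'\equivSW\CL_X$ recorded in Fact~\ref{fact:cluster}), so the genuinely new task is to run the two translations uniformly in such a way that they remain correct on names of the empty set $A=\emptyset$. This is exactly what the extra point $\infty$ is designed to accommodate, and the plan is to treat the two directions separately, in each case checking that the $\infty$--terms are handled harmlessly.

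For $\psi_*\le\psi_-'$ the plan is as follows. Given $p$ with $\delta^\IN_{X\sqcup\{\infty\}}(p)=(x_n)_n$ and $\CL_{X\sqcup\{\infty\}}(x_n)_n\cap X=A$, I would limit--compute, for every rational open ball $B_j$, whether $B_j$ contains only finitely many members of the sequence, using a guess that is reset each time a new $x_n\in B_j$ is observed. Since membership $x_n\in B_j$ is semidecidable (and is simply \emph{false} whenever $x_n=\infty$, because the $B_j$ are balls of $X$ and $\infty\notin X$), this produces a sequence of $\psi_-$--approximations converging in Baire space to a $\psi_-$--name that enumerates exactly the balls with finitely many members. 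The key point to verify is that the union of these balls equals $X\setminus A$: a ball with finitely many members cannot meet a cluster point and hence lies in $X\setminus A$, while every non--cluster point $x\in X\setminus A$ has some rational ball around it with only finitely many members. Thus we obtain a $\psi_-'$--name of $A$; note that the construction is entirely uniform and, in particular, for $A=\emptyset$ simply yields balls covering all of $X$, with the $\infty$--terms never counted.

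For $\psi_-'\le\psi_*$ I would proceed as follows. Given a $\psi_-'$--name, i.e.\ a sequence $(q_s)_s$ in Baire space converging to a $\psi_-$--name $q$ of $A$, at each stage $s$ output (as a finite block) all centres $\alpha(n)$ with $n\le s$ that are not covered by the balls read off from $q_s$, followed by one padding term $\infty$. The resulting sequence is always infinite and lies in $X\sqcup\{\infty\}$. That its set of cluster points in $X$ is exactly $A$ is, for non--empty $A$, the verification carried out in \cite[Corollary~9.5]{BGM12}: if $x\notin A$ then some ball $B_{q(i_0)}$ of the limit name covers a neighbourhood of $x$, and since $q_s(i_0)=q(i_0)$ from some stage on, almost all blocks omit the centres near $x$, so $x$ is not a cluster point; conversely every $a\in A$ is approached by arbitrarily close centres that stay uncovered long enough to be emitted cofinally often. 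The role of the padding is twofold: it guarantees an infinite name in all cases, and for $A=\emptyset$ it makes $\infty$ the only cluster point, so that the cluster set meets $X$ in $\emptyset=A$, as required.

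Combining the two directions gives $\psi_*\equiv\psi_-'$, so the identity $\id\colon\AA_*(X)\to\AA_-(X)'$ is a computable isomorphism. The main obstacle is the clustering verification in the second direction --- ensuring simultaneously that every point of $A$ is approached infinitely often and that no point of $X\setminus A$ becomes a spurious cluster point, despite $q_s\to q$ converging only pointwise and thus possibly covering points of $A$ transiently; since this is settled for non--empty $A$ in \cite{BGM12}, the remaining care is precisely the bookkeeping around $\infty$ that extends both reductions correctly to the empty set.
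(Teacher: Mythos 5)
Your high-level plan --- take the reductions of \cite{BGM12} for granted on non-empty sets and extend them to $A=\emptyset$ via the added point $\infty$ --- is exactly the paper's plan. But the paper executes it by using the algorithms from the proofs of \cite[Proposition~9.2]{BGM12} and \cite[Theorem~9.4]{BGM12} as black boxes and modifying them minimally (interleaving a name of $\infty$ into every loop of the second one), whereas you substitute your own constructions and then try to borrow the verification from \cite{BGM12}; that transfer is not legitimate, since their verification applies to their algorithms, and your construction for the direction $\psi_-'\leq\psi_*$ is in fact wrong. First, ``output all centres $\alpha(n)$ with $n\leq s$ that are not covered by the balls read off from $q_s$'' is not a computable instruction: non-membership of a point in a finite union of rational open balls is only $\pO{1}$, so it cannot be certified at any finite stage (and replacing it by ``not \emph{yet observed} to be covered'' produces transient outputs near every point of $X$, creating spurious cluster points). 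Second, even granting an exact coverage test, the construction fails mathematically because of the not-yet-stabilized part of the input: take $X=\IR$, $A=\{0\}$, let $q$ be a $\psi_-$--name of $A$, and set $q_s(i):=q(i)$ for $i\neq s$ and $q_s(s):=$ a code of the ball $(-1,1)$. Then $(q_s)_s$ converges to $q$, so it is a legitimate $\psi_-'$--name of $A$, but at every stage $s$ the garbage ball $(-1,1)$ is among the balls read off from $q_s$, so no centre in $(-1,1)$ is ever emitted; hence $0$ is not a cluster point of your output and the cluster set is not $A$. This is precisely the failure of your claim that every $a\in A$ ``is approached by \ldots centres that stay uncovered long enough to be emitted cofinally often'', and coping with such interference is the actual content of the proof of \cite[Theorem~9.4]{BGM12} (whose algorithm emits carefully chosen points $\alpha(h(s,n))$ with priority-style bookkeeping); it cannot be deferred.

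Your direction $\psi_*\leq\psi_-'$ has a smaller, repairable flaw: ``$B_j$ contains only finitely many members of the sequence'' is a $\sO{2}$ property of the input, hence not limit-decidable ball by ball, and a per-ball guess that is ``reset each time a new $x_n\in B_j$ is observed'' changes value infinitely often on balls containing infinitely many terms, so your approximating names do not converge in Baire space and you do not obtain a valid $\psi_-'$--name. The standard repair is to enumerate rather than decide: use positions indexed by pairs $\langle j,N\rangle$, where position $\langle j,N\rangle$ carries $B_j$ until some $x_n\in B_j$ with $n\geq N$ is observed, after which it is permanently replaced by a padding (empty) ball. Each position then changes at most once, so the names converge, and the limit enumerates those $B_j$ admitting a tail bound $N$, i.e.\ exactly the balls containing finitely many terms, whose union is $X\setminus A$ by your (correct) topological argument. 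With that repair, your treatment of $\infty$ in this direction (membership $x_n\in B_j$ is simply false when $x_n=\infty$, so for $A=\emptyset$ all balls are eventually listed) is correct and agrees with the paper's handling of the empty set.
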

\begin{proof}
In the proof of \cite[Proposition~9.2]{BGM12} the reduction $\psi_*\leq\psi_-'$ is described for non-empty sets $A\In X$.
We extend this algorithm to include the case of the empty set as follows. Again we check condition (1) in the cited proof
using an enumeration $(B_i)_i$ of balls with respect to $X$. If $A=\emptyset$, then $x_n=\infty$ for all $n\geq k$ and some $k$ and then
condition (1) is automatically satisfied and all balls $B_i$ will be listed, i.e., a name of $A=\emptyset$ with respect to $\psi_-'$ will be generated.

In the proof of \cite[Theorem~9.4]{BGM12} the reduction $\psi_-'\leq\psi_*$ is described for non-empty sets $A\In X$.
The algorithm produces certain outputs $\alpha(h(s,n))$ and we modify the algorithm such that in any loop we obtain as additional output
a name of the point $\infty$. This guarantees that $\infty$ is one cluster point of the output, possibly besides other cluster points that remain unchanged. 
If $A$ is the empty set, we actually obtain a name of the empty set as output.
The correctness proof of the algorithm stays exactly as given in \cite{BGM12}.
\end{proof}

In Section~\ref{sec:Pi01-genericity} we will see another representation $\psi_\#$ that is equivalent to $\psi_-'$ and $\psi_*$
in the special case of Cantor space $X=2^\IN$. 
We note that Fact~\ref{fact:cluster} is a consequence of Proposition~\ref{prop:closed-sets}.

\section{Cluster Points and Boundary Approximation}
\label{sec:cluster-boundary}

The purpose of this section is to strengthen Proposition~\ref{prop:BCT-upper-bound}.
We will prove that $\BCT_2\equivSW\BCT_0'$ and $\BCT_3\equivSW\BCT_1'$ for computable perfect Polish spaces. 
As a preparation for this result we prove a purely topological lemma.
We recall that a metric space is called {\em perfect} if it has no isolated points. 

\begin{lemma}
\label{lem:cluster-closure}
Let $X$ be a metric space and let $(x_n)_n$ be a sequence in $X$. Then
\[A:=\CL_X(x_n)_n\In\overline{\{x_n:n\in\IN\}}=:B.\]
If $X$ is perfect, then $A^\circ=B^\circ$ and, in particular, $B$ is nowhere dense if $A$ is so.
\end{lemma}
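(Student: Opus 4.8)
The plan is to separate the two claims: the inclusion $A\In B$ holds in any metric space, while the interior equality is where perfectness enters. For the inclusion I would argue directly from the definitions: if $x$ is a cluster point of $(x_n)_n$, then every neighborhood of $x$ meets $\{x_n:n\in\IN\}$, so $x\in\overline{\{x_n:n\in\IN\}}=B$. Since $A\In B$ gives $A^\circ\In B^\circ$ for free, the real content of the second claim is the reverse inclusion $B^\circ\In A^\circ$, and I would obtain it by showing that the open set $B^\circ$ is already contained in $A$.

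The heart of the argument is a single observation: any nonempty open set $W\In B$ contains at least one term $x_n$, because, picking any $y\in W$, the set $W$ is a neighborhood of a point of $\overline{\{x_n:n\in\IN\}}$ and hence must contain some $x_n$. To show $B^\circ\In A$, fix $x\in B^\circ$, choose an open $V\In B$ with $x\in V$, and let $W\In V$ be an arbitrary open neighborhood of $x$. I would show that the index set $S:=\{n\in\IN:x_n\in W\}$ is infinite. Suppose not; then $F:=\{x_n:n\in S\}$ is finite. Here perfectness enters: a nonempty open set in a perfect metric space is infinite, so $W\setminus F$ is again a nonempty open subset of $B$, and the observation produces some $x_m\in W\setminus F$; but $x_m\in W$ forces $m\in S$ and hence $x_m\in F$, a contradiction. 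Thus $S$ is infinite, every neighborhood of $x$ contains $x_n$ for infinitely many indices $n$, so $x\in A$. Since $B^\circ$ is open, this yields $B^\circ\In A^\circ$, and combined with $A^\circ\In B^\circ$ we get $A^\circ=B^\circ$.

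The final ``in particular'' is then immediate: both $A$ (a set of cluster points) and $B$ (a closure) are closed, so for each of them nowhere density is equivalent to having empty interior; if $A^\circ=\emptyset$ then $B^\circ=A^\circ=\emptyset$, so $B$ is nowhere dense as well.

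I expect the main obstacle to be the bookkeeping between the indexed sequence $(x_n)_n$ and the set of its values $\{x_n:n\in\IN\}$: being a cluster point of the sequence requires infinitely many \emph{indices} to land in each neighborhood, whereas the closure $B$ only records which points are approximated. Perfectness is precisely what bridges this gap, since it guarantees enough room to discard the finitely many values already accounted for and rerun the one-term observation. The only genuinely delicate point is justifying that removing finitely many points from a nonempty open set in a perfect metric space leaves a nonempty open set; everything else is routine.
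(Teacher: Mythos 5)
Your proof is correct, and it takes a genuinely different route from the paper's. The paper first proves the auxiliary inclusion $B\setminus A\subseteq\{x_n:n\in\IN\}$ (any point of the closure that is not a value of the sequence is the limit of a subsequence, hence a cluster point), and then obtains $B(x,r)\subseteq A$ by contradiction: a non-cluster point $y\in B(x,r)$ admits a ball $B(y,s)$ containing only finitely many terms, and perfectness yields a point $z\in B(y,s/2)$ distinct from those terms, hence positively bounded away from the entire sequence; such a $z$ lies in $B$ but is neither a value nor a cluster point, contradicting the auxiliary inclusion. You replace both the subsequence extraction and this metric-flavored ``bounded away'' construction with a single purely topological device: every nonempty open subset of $B$ meets the value set, and in a perfect $T_1$ space one may delete finitely many points from a nonempty open set and still have a nonempty open set; iterating the observation then forces the index set $\{n:x_n\in W\}$ to be infinite for every open neighborhood $W\subseteq V$ of a point $x\in B^\circ$. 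What your version buys is generality and economy: it works verbatim in any perfect $T_1$ space, never uses the metric, and establishes the cluster point property directly rather than through a set-theoretic dichotomy; the paper's version, in exchange, isolates $B\setminus A\subseteq\{x_n:n\in\IN\}$ as a reusable standalone fact. The one step you should make explicit is the passage from neighborhoods $W\subseteq V$ to arbitrary neighborhoods $U$ of $x$: since $U\cap V$ is again a neighborhood of $x$ contained in $V$, infinitude of its index set gives infinitude for $U$; this is routine, as you indicate. Both proofs use perfectness for exactly the same purpose, namely to dodge finitely many sequence values inside a nonempty open set.
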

\begin{proof}
It is clear that $A\In B$ and hence $A^\circ\In B^\circ$. 
We prove that 
\begin{eqnarray}
\label{eqn:cluster-closure}
B\setminus A&\In&\{x_n:n\in\IN\}=:C
\end{eqnarray}
Let $x\in B\setminus C$. 
Then there is a strictly increasing sequence $(k_i)_i$ of natural numbers such that $x=\lim_{i\to\infty}x_{k_i}\in A$.
This proves that $B\setminus C\In A$ and hence $B\setminus A\In C$.

Let now $X$ be perfect. We prove $B^\circ\In A^\circ$. Let $x\in B^\circ$, i.e., there is some
$r>0$ with $B(x,r)\In B$. We will show that $B(x,r)\In A$ follows. Let us assume to the contrary that $B(x,r)\not\In A$.
Then there is some $y\in B(x,r)\setminus A$. In particular, $y$ is not a cluster point of $(x_n)_n$ 
and hence there is some $s>0$ such that $B(y,s)\In B(x,r)$ and $B(y,s)$ only contains finitely many $x_n$.
Since $X$ is perfect, there is some $z\in B(y,\frac{s}{2})$ that is different from all these finitely many $x_n$ and hence
positively bounded away from all $x_n$, i.e., there is some $t>0$ such that $d(z,x_n)>t$ for all $n\in\IN$.
This implies $z\in B(x,r)\setminus A$ and since $B(x,r)\In B$, this is a contradiction to (\ref{eqn:cluster-closure}).
Hence, $B(x,r)\In A$ and hence $x\in A^\circ$.
%Short alternative proof for complete $X$:
%Since $A$ is closed we obtain by \cite[Exercise~1.3.D~(b)]{Eng89} that 
%\[B^\circ=(A\cup\{x_n:n\in\IN\})^\circ=(A\cup\{x_n:n\in\IN\}^\circ)^\circ.\]
%If $X$ is a perfect Polish space, then $\{x_n:n\in\IN\}^\circ=\emptyset$ (since any non-empty
%open ball has continuum cardinality by \cite[Corollary~6.3]{Kec95}) and we obtain $B^\circ=A^\circ$.
\end{proof}

This lemma has the following computational consequence, which roughly speaking says that we can approximate
closed sets given as cluster points of sequences by closed sets given as closures of sequences from above and
if the underlying space is perfect, then this approximation is tight in the sense that nowhere density is preserved. 

\begin{proposition}
\label{prop:cluster-closure}
Let $X$ be a computable Polish space. Then there is a computable multi-valued map
$M:\AA_*(X)\mto\AA_+(X)$ such that
\begin{enumerate}
\item $M(A)\In\{B:A\In B\}$
\item If $X$ is perfect and $A\In X$ is nowhere dense, then all $B\in M(A)$ are nowhere dense too.
\end{enumerate}
\end{proposition}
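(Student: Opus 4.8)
The plan is to realize $M$ by the identity on names. A $\psi_*$-name of $A$ is, by definition, a $\delta_{X\sqcup\{\infty\}}^\IN$-name $p$ of a sequence $(x_n)_n$ with $\CL_{X\sqcup\{\infty\}}(x_n)_n\cap X=A$, and the very same $p$ is also a legitimate $\psi_+$-name, namely of the closed set $B:=\overline{\{x_n:n\in\IN\}}\cap X$. I would therefore let $M$ be the multi-valued map whose computable realizer is the identity $\id:\Baire\to\Baire$; concretely $M(A)$ collects all the sets $B$ obtained in this way as $(x_n)_n$ ranges over the $\psi_*$-names of $A$. Since the identity is computable and $\psi_+(p)$ is defined whenever $\delta_{X\sqcup\{\infty\}}^\IN(p)$ is, this $M$ is computable and total on $\AA_*(X)$, and it remains only to check properties~(1) and~(2) for each such output $B$.

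Property~(1) is immediate from the first assertion of Lemma~\ref{lem:cluster-closure}: applied in the metric space $X\sqcup\{\infty\}$ it gives $\CL_{X\sqcup\{\infty\}}(x_n)_n\In\overline{\{x_n:n\in\IN\}}$, and intersecting with $X$ yields $A\In B$.

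For property~(2) I would assume $X$ is perfect and $A$ nowhere dense and show $B$ is nowhere dense. The point to watch is that Lemma~\ref{lem:cluster-closure} requires a \emph{perfect} ambient space, whereas $X\sqcup\{\infty\}$ is not perfect, having $\infty$ as an isolated point. I would sidestep this by passing to the subsequence $(y_m)_m$ of those $x_n$ that lie in $X$: since $\infty$ has distance $1$ from all of $X$, balls of radius $<1$ never see it, so both the cluster set and the closure, computed inside $X$, are unaffected, giving $A=\CL_X(y_m)_m$ and $B=\overline{\{y_m:m\in\IN\}}$ in $X$. If infinitely many $x_n$ lie in $X$, then $(y_m)_m$ is an honest sequence in the perfect space $X$, so Lemma~\ref{lem:cluster-closure} yields $A^\circ=B^\circ$; as $A$ is nowhere dense this common interior is empty, whence $B$ is nowhere dense. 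If only finitely many $x_n$ lie in $X$, then $A=\emptyset$ and $B$ is a finite set, which is automatically nowhere dense because in a perfect space every nonempty ball is infinite and so cannot be contained in a finite set.

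The proof is essentially a direct reading-off of Lemma~\ref{lem:cluster-closure}, and I expect the only delicate points to be exactly the two just flagged: the reduction from the non-perfect space $X\sqcup\{\infty\}$ to its perfect part $X$ by stripping the $\infty$-terms, and the degenerate case $A=\emptyset$ (finitely many points in $X$), which has to be handled separately since the sequence $(y_m)_m$ is then no longer infinite.
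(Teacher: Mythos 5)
Your proposal is correct and is essentially the paper's own proof: the paper likewise realizes $M$ by the identity on names, outputting $B=\overline{\{x_n:n\in\IN\}}\cap X$ from a $\psi_*$-name of $A$, and then invokes Lemma~\ref{lem:cluster-closure}. You are in fact more careful than the paper, whose proof only says the claim ``follows from Lemma~\ref{lem:cluster-closure} for non-empty $A$'': your explicit passage from the non-perfect space $X\sqcup\{\infty\}$ to its perfect part $X$ by discarding the $\infty$-terms, and your separate handling of the degenerate case where only finitely many terms lie in $X$ (so $A=\emptyset$ and $B$ is finite), supply exactly the details the paper leaves implicit.
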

\begin{proof}
Given $A=\CL_{X\sqcup\{\infty\}}(x_n)_n\cap X$, we simply
compute $B=\overline{\{x_n:n\in\IN\}}\cap X$ with respect to $\psi_+$. Then the claim follows from Lemma~\ref{lem:cluster-closure} for non-empty $A$. 
\end{proof}

Together with Propositions~\ref{prop:BCT-upper-bound} and \ref{prop:closed-sets} we obtain the desired result.

\begin{theorem}[Jumps]
\label{thm:jumps}
$\BCT_0'\equivSW\BCT_2$ and $\BCT_1'\equivSW\BCT_3\equivSW\CL_\IN$ for every computable perfect Polish space $X$.
\end{theorem}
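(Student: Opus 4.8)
The plan is to establish both equivalences by pairing the already-proven upper bounds of Proposition~\ref{prop:BCT-upper-bound} with matching lower bounds. That proposition already gives $\BCT_2\leqSW\BCT_0'$ and $\BCT_3\leqSW\BCT_1'$, so what remains is the reverse directions $\BCT_0'\leqSW\BCT_2$ and $\BCT_1'\leqSW\BCT_3$, together with the identification $\BCT_1'\equivSW\CL_\IN$. The last of these is immediate: Fact~\ref{fact:BCT0-BCT1} gives $\BCT_1\equivSW\C_\IN$, and since the jump is monotone and respects equivalence with respect to $\leqSW$, we obtain $\BCT_1'\equivSW\C_\IN'$, which equals $\CL_\IN$ by Fact~\ref{fact:cluster} with $X=\IN$. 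Hence the discrete chain $\BCT_1'\equivSW\BCT_3\equivSW\CL_\IN$ follows once $\BCT_1'\leqSW\BCT_3$ is shown.

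The engine for both reverse reductions is the map $M$ of Proposition~\ref{prop:cluster-closure}. First I would record that the jump commutes with the countable product, i.e.\ $(\psi_-^\IN)'\equiv(\psi_-')^\IN$; this is a routine unwinding of the pairing function $\langle p_0,p_1,\dots\rangle$, since a sequence converging to a $\psi_-^\IN$-name converges coordinatewise to the constituent $\psi_-$-names. Combined with Proposition~\ref{prop:closed-sets} applied coordinatewise, a $(\psi_-^\IN)'$-name of $(A_i)_i$ then computably yields a $\psi_*^\IN$-name of $(A_i)_i$. Applying a computable realizer of $M$ coordinatewise produces a $\psi_+^\IN$-name of a sequence $(B_i)_i$ with $A_i\In B_i$ for all $i$, and, since $X$ is perfect, with $B_i$ nowhere dense whenever $A_i$ is (Proposition~\ref{prop:cluster-closure}(2)). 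This single input transformation, computable as a composition of computable maps, serves for both reductions.

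For $\BCT_0'\leqSW\BCT_2$: if $(A_i)_i\in\dom(\BCT_0')$ then each $A_i$ is nowhere dense, so each $B_i$ is nowhere dense and $(B_i)_i\in\dom(\BCT_2)$; any $x$ returned by $\BCT_2$ lies in $X\setminus\bigcup_i B_i\In X\setminus\bigcup_i A_i$, hence is a correct output for $\BCT_0'$. For $\BCT_1'\leqSW\BCT_3$: if $(A_i)_i\in\dom(\BCT_1')$ then $X=\bigcup_i A_i\In\bigcup_i B_i$, so $(B_i)_i\in\dom(\BCT_3)$; and by Lemma~\ref{lem:cluster-closure} one has $A_i^\circ=B_i^\circ$ for perfect $X$, so the index $i$ returned by $\BCT_3$ (with $B_i^\circ\neq\emptyset$) satisfies $A_i^\circ\neq\emptyset$ and is a correct output for $\BCT_1'$. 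In both cases the output produced for the $B_i$ is already correct for the $A_i$, so the output modifier can be taken to be the identity and the reductions are \emph{strong}.

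The conceptual work is entirely contained in the supporting results, so the main obstacle is not a new difficulty but bookkeeping: one must check that each reduction genuinely needs no access to the original input in its output phase (which is what makes it strong), and that the domain conditions---nowhere density for $\BCT_0'$/$\BCT_2$ and the covering $X=\bigcup_i A_i$ for $\BCT_1'$/$\BCT_3$---survive the passage through $M$. Both hinge on perfectness: without it, an isolated point appearing only finitely often in the defining sequence would lie in $B_i=\overline{\{x_n:n\in\IN\}}$ but not in $A_i=\CL_X(x_n)_n$, making $B_i$ somewhere dense while $A_i$ is nowhere dense, and thereby breaking both the domain preservation in the $\BCT_0'$ reduction and the index correctness in the $\BCT_1'$ reduction. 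The empty-set case is handled, as usual, by the point of infinity built into $\psi_*$ and $\psi_+$ and by Proposition~\ref{prop:closed-sets}.
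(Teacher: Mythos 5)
Your proposal is correct and takes essentially the same approach as the paper: the paper obtains the theorem by combining Proposition~\ref{prop:BCT-upper-bound} (which already contains the upper bounds and the identification $\BCT_1'\equivSW\CL_\IN$) with Proposition~\ref{prop:closed-sets} and the map $M$ of Proposition~\ref{prop:cluster-closure}, exactly as you do. Your explicit bookkeeping---the coordinatewise application of these results, the fact that $(\psi_-^\IN)'\equiv(\psi_-')^\IN$, and the use of Lemma~\ref{lem:cluster-closure} to get $A_i^\circ=B_i^\circ$ for the index correctness in $\BCT_1'\leqSW\BCT_3$---merely spells out details the paper leaves implicit.
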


The Baire Category Theorem for non-perfect spaces is not particularly interesting, since every
dense set in a non-perfect space needs to contain the isolated points. 
The next proposition shows that in this case $\BCT_2$ and $\BCT_3$ are computable.

\begin{proposition}[Non-perfect spaces]
\label{prop:non-perfect}
Let $X$ be a computable Polish space, which is not perfect.
Then $\BCT_2\equivSW\id_{\{0\}}$ and $\BCT_3\equivSW\id_\IN$. 
In particular, $\BCT_2$ and $\BCT_3$ are computable.
\end{proposition}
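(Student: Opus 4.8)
The plan is to exploit the isolated point supplied by non-perfectness as a uniform computable witness. Since $X$ is not perfect it has an isolated point $x_0$, and because the dense sequence $\alpha$ must meet the open set $\{x_0\}$, we have $x_0=\alpha(n_0)$ for some $n_0$; hence $x_0$ is a computable point, and we may fix once and for all a rational $r>0$ with $B(x_0,r)\cap X=\{x_0\}$. The key observation for $\BCT_2$ is that $x_0$ lies in $X\setminus\bigcup_{i} A_i$ for \emph{every} $(A_i)\in\dom(\BCT_2)$: if $x_0\in A_i$, then the nonempty open set $\{x_0\}$ would be contained in $A_i$, forcing $A_i^\circ\neq\emptyset$ and contradicting that the $A_i$ are nowhere dense.

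First I would establish $\BCT_2\equivSW\id_{\{0\}}$. For $\BCT_2\leqSW\id_{\{0\}}$ the observation above shows that the constant map returning the fixed name of $x_0$ is a correct realizer simultaneously on all inputs; since this single answer works uniformly, the output modifier $H$ of a strong reduction can simply print this name while using the trivial oracle. For the converse $\id_{\{0\}}\leqSW\BCT_2$, I would note that $\BCT_2$ is pointed: the constant sequence with every $A_i=\emptyset$ is a computable element of $\dom(\BCT_2)$ (the empty set has the computable constant $\infty$-name, and $\emptyset^\circ=\emptyset$), so feeding this instance to $\BCT_2$ and discarding the output yields the reduction.

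Next I would treat $\BCT_3\equivSW\id_\IN$. For the upper bound, since the $A_i$ now cover $X$ we have $x_0\in A_{i_0}$ for some $i_0$, and then $\{x_0\}\In A_{i_0}$ witnesses $A_{i_0}^\circ\neq\emptyset$, so any index whose set contains $x_0$ is correct. From a $\psi_+$-name I would dovetail the enumerations of all $A_i$ and, for each enumerated point $y$ of $A_i$, semidecide whether $d(y,x_0)<r$; because $B(x_0,r)\cap X=\{x_0\}$ this holds exactly when $y=x_0$, i.e.\ exactly when $x_0\in A_i$. Outputting the first index found this way terminates (as $x_0$ is covered) and is correct, so $\BCT_3$ is computable and hence $\BCT_3\leqSW\id_\IN$. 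For $\id_\IN\leqSW\BCT_3$, given $n$ I would let $K$ build the instance with $A_n:=X$ (named by $\alpha$) and $A_i:=\emptyset$ for $i\neq n$ (named by the constant $\infty$-sequence); this covers $X$, its unique somewhere-dense index is $n$, so every realizer of $\BCT_3$ must return $n$ and $H$ copies it out.

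The main obstacle I anticipate is not the combinatorics but the bookkeeping around the positive representation and strong reductions. One must verify that ``$x_0\in A_i$'' is genuinely semidecidable from $\psi_+$-information, which is precisely where isolatedness of $x_0$ (together with the fixed rational $r$) is used; and one must be careful that the witness for $\BCT_2$ is a \emph{single} point valid on all inputs, since the output modifier of a strong reduction cannot reinspect the input. Handling the empty set correctly inside $\psi_+$ via the adjoined point $\infty$ is the only remaining technical point.
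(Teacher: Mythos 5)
Your proposal is correct and follows essentially the same route as the paper's proof: use the computable isolated point $x_0$ (which lies in $\range(\alpha)$) as a uniform answer for $\BCT_2$, with pointedness via the constant empty-set sequence giving the converse reduction, and for $\BCT_3$ search the $\psi_+$-enumerations for an index whose set contains $x_0$, using the isolation radius to recognize $x_0$, with the reverse reduction built from $A_n=X$ and $A_i=\emptyset$ otherwise. The only cosmetic difference is that the paper decides equality with $x_0$ while you merely semidecide it, which suffices equally well for the dovetailed search.
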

\begin{proof}
Let $(X,d,\alpha)$ be a computable Polish space with an isolated point $x$.
Then $x$ is in the dense subset $\range(\alpha)$ of the space and hence $x$ is computable.
If $A\In X$ is a closed set with $x\in A$, then $x\in A^\circ$ and hence $A^\circ\not=\emptyset$.

We consider the case of $\BCT_2$. 
The aforementioned fact implies that the domain of $\BCT_2$ only contains sequences $(A_i)$ such that $x\not\in A_i$
for all $i$ and the constant function that maps all these sequences to $x$ is a computable selector
of $\BCT_2$. The constant sequence $(A_i)$ with $A_i=\emptyset$ is a computable point
in the domain of $\BCT_2$. Altogether, this implies $\BCT_2\equivSW\id_{\{0\}}$.

We now consider the case of $\BCT_3$.
If $(A_i)$ is a sequence of closed sets $A_i\In X$ with $X=\bigcup_{i=0}^\infty A_i$, then
one of the sets $A_i$ has to contain the isolated point $x$ and hence $A_i^\circ\not=\emptyset$.
In order to realize $\BCT_3$ we just need to find $i$ with $x\in A_i$. Since $A_i$ is given by 
a sequence $(x_{ij})_{j\in\IN}$ that is dense in it, we need to find $i,j$ such that $x_{ij}=x$.
Since $x$ is isolated, there is some $\varepsilon>0$ such that for all $y\in X$ we have $d(x,y)<\varepsilon\iff x=y$. Hence, we can decide the equality $x_{ij}=x$ and eventually find $i,j$ with $x_{ij}=x$.
This proves $\BCT_3\leqSW\id_{\IN}$. The reverse reduction is easy to obtain: given $n\in\IN$
we compute a sequence $(A_i)$ of closed sets $A_i\In X$ such that $A_i=\emptyset$ for $i\not=n$
and $A_n=X$.
\end{proof}

This result applies to the case of $X=\IN$.
In particular, it shows that Theorem~\ref{thm:jumps} does not hold true for non-perfect spaces.
We note that $\CL_\IN$ is effectively $\SO{3}$--measurable, but not $\SO{2}$--measurable
(the former follows for instance from \cite[Corollary~9.2]{BGM12} which implies the statement
$\CL_\IN\leqW\lim\circ\lim$ and the latter follows from \cite[Proposition~12.5]{BGM12}, which implies the stronger
statement that not even $\CL_{\{0,1\}}$ is $\SO{2}$--measurable).
Altogether, we obtain the following dichotomy that characterizes the perfect spaces among the
computable Polish spaces. 

\begin{corollary}[Dichotomy]
\label{cor:dichotomy}
Let $X$ be a computable Polish space. Then $X$ is perfect if and only if $\BCT_3$
is not computable (in which case $\BCT_3\equivSW\CL_\IN$).
\end{corollary}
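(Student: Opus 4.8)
The plan is to establish the two implications of the biconditional separately, assembling results already proved in the excerpt; both directions are almost immediate, so no genuinely new argument is needed.

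First I would dispose of the ``only if'' direction in contrapositive form: if $X$ is \emph{not} perfect, then Proposition~\ref{prop:non-perfect} gives $\BCT_3 \equivSW \id_\IN$, so in particular $\BCT_3$ is computable. Thus perfectness is necessary for non-computability, which is exactly the contrapositive of one half of the equivalence.

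For the converse I would assume $X$ is perfect and invoke Theorem~\ref{thm:jumps}, which immediately yields $\BCT_3 \equivSW \CL_\IN$; this also establishes the parenthetical claim in the statement. It then remains only to check that $\CL_\IN$ is not computable. Here I would use the observation recorded just before the corollary that $\CL_\IN$ is not $\SO{2}$-measurable, which rests on \cite[Proposition~12.5]{BGM12} (indeed not even $\CL_{\{0,1\}}$ is $\SO{2}$-measurable). Since every computable map between represented spaces possesses a continuous realizer and is therefore a fortiori $\SO{2}$-measurable, any map failing to be $\SO{2}$-measurable cannot be computable. Hence $\BCT_3$ is not computable whenever $X$ is perfect.

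The two implications together give the equivalence, with the non-computable case accompanied by $\BCT_3 \equivSW \CL_\IN$ from Theorem~\ref{thm:jumps}. The only step that needs a word of justification is the passage from non-$\SO{2}$-measurability to non-computability, and this is immediate from the continuity of computable functions; I do not anticipate any real obstacle, since the corollary is essentially a bookkeeping assembly of Proposition~\ref{prop:non-perfect} and Theorem~\ref{thm:jumps} together with the measurability remark preceding it.
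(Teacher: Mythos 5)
Your proposal is correct and follows essentially the same route as the paper: the non-perfect case is handled by Proposition~\ref{prop:non-perfect}, the perfect case by Theorem~\ref{thm:jumps} together with the remark that $\CL_\IN$ is not $\SO{2}$--measurable (hence not computable, since computable maps are continuously realizable). The only thing you add is the explicit bookkeeping step from non-$\SO{2}$-measurability to non-computability, which the paper leaves implicit.
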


Analogously, an arbitrary Polish space is perfect if and only if $\BCT_3$ is discontinuous.
Finally, we can derive other interesting consequences from Proposition~\ref{prop:cluster-closure}.
In \cite[Theorem~9.3.3]{BG09} the following result was proved. 
For a subset $A\In X$ of a topological space $X$ we denote by $\partial A$ the {\em boundary} of $A$.

\begin{fact}[Boundary]
\label{fact:boundary}
$\partial:\AA_-(X)\to\AA_-(X),A\mapsto\partial A$ is limit computable for every computable metric space $X$, i.e., $\partial\leqSW\lim$.
\end{fact}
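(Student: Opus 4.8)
The plan is to produce negative information on $\partial A$ directly from negative information on $A$, using a single application of $\lim$ to resolve a family of $\pO{1}$ membership questions. Write $U := X\setminus A$ for the open complement. Since $A$ is closed we have $\partial A = A\setminus A^\circ$, and hence $X\setminus\partial A = U\cup A^\circ$. As $\partial A$ is closed, producing a $\psi_-$-name of it amounts to enumerating rational balls whose union is exactly $U\cup A^\circ$.

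The set $U$ is handed to us for free: a $\psi_-$-name $p$ of $A$ enumerates precisely a family of rational balls $B_{p(i)}$ with $U=\bigcup_i B_{p(i)}$, so these balls can simply be copied to the output. The interior $A^\circ$ is the union of all rational balls $B_j$ with $B_j\subseteq A$, and $B_j\subseteq A$ is equivalent to $B_j\cap U=\emptyset$. The key observation is that this latter condition is co-c.e.\ in $p$: using the dense sequence $\alpha$ one semi-decides $B_j\cap B_{p(i)}\neq\emptyset$ by searching for some $\alpha(m)$ lying in both balls (this is valid since $d\circ(\alpha\times\alpha)$ is computable and the balls are rational), so $B_j\cap U\neq\emptyset$ is c.e.\ and $B_j\cap U=\emptyset$ is $\pO{1}$. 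Hence for each $j$ the predicate ``$B_j\subseteq A$'' is limit-decidable, and the union of those $B_j$ that pass the test is precisely $A^\circ$.

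The reduction to $\lim$ is then assembled as follows. The preprocessing $K$ turns $p$ into a sequence in $\dom(\lim)$ whose limit encodes both a copy of $p$ and the characteristic function of $S:=\{\,j:B_j\cap U=\emptyset\,\}$; the $S$-component is obtained from the stabilising stagewise guesses ``no ball among $B_{p(0)},\dots,B_{p(s)}$ meets $B_j$'', while the $p$-component is passed through as a constant, so the whole sequence converges. Applying $\lim$ and then a computable postprocessor $H$ that emits all $B_{p(i)}$ together with all $B_j$ for $j\in S$ yields a $\psi_-$-name of $\partial A$, since the union of the emitted balls is exactly $U\cup A^\circ = X\setminus\partial A$. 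Because $H$ recovers both $p$ and $S$ from the single output of $\lim$, it needs no separate access to the original input, so the reduction is strong, giving $\partial\leqSW\lim$. (One may alternatively invoke that $\lim$ is a cylinder to upgrade an ordinary reduction to a strong one.)

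I expect the only genuinely delicate point to be the verification that $B_j\cap U=\emptyset$ is co-c.e.\ in an \emph{arbitrary} computable metric space, where disjointness of two balls need not be decidable from a metric inequality on their parameters. This is exactly why I route the intersection test through the dense sequence $\alpha$ rather than through a comparison of centres and radii; the remaining bookkeeping that assembles the stagewise guesses into a single convergent input for $\lim$ is routine.
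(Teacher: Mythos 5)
Your proof is correct. Note, however, that the paper does not actually prove this statement: it is labelled a Fact and imported by citation from \cite[Theorem~9.3.3]{BG09}, so there is no in-paper proof to compare yours against. Your argument is the natural (and, as far as the cited source goes, the standard) one: decompose $X\setminus\partial A=(X\setminus A)\cup A^\circ$, observe that $A^\circ$ is exactly the union of the rational balls $B_j$ with $B_j\In A$, note that $B_j\In A\iff B_j\cap U=\emptyset$ is a $\pO{1}$ condition relative to the input name $p$ (the density argument you give is precisely what makes the intersection test c.e.\ in an arbitrary computable metric space, where disjointness of balls is not decidable from centres and radii), and then resolve all these $\pO{1}$ questions with a single application of $\lim$ via monotone stagewise guesses. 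The bookkeeping is sound: the guesses are monotone in the stage and hence converge, pairing the constant copy of $p$ with them keeps the input in $\dom(\lim)$, and since $H$ recovers $p$ from the output of $\lim$ the reduction is genuinely strong (the cylinder argument for $\lim$ would also do, and is the route the paper itself gestures at after Fact~3.3). The only edge cases worth a sentence in a written-up version are empty balls and $A\in\{\emptyset,X\}$, but they cause no harm in your scheme.
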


While the boundary map of type $\partial:\AA_-(X)\to\AA_+(X)$ is $\SO{3}$--hard for Cantor space  and not even Borel measurable in case of Baire space \cite[Theorem~9.3]{BG09}, it turns
out that we can approximate the boundary from above in the following sense.

\begin{corollary}[Boundary approximation]
\label{cor:boundary-approximation}
The map $P:\AA_-(X)\mto\AA_+(X)$ with
\[P(A):=\{B:\partial A\In B, B \mbox{ nowhere dense}\}\]
is computable for all computable perfect Polish spaces $X$.
\end{corollary}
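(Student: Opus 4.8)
The plan is to chain together the three preceding results: Fact~\ref{fact:boundary}, Proposition~\ref{prop:closed-sets}, and Proposition~\ref{prop:cluster-closure}. The crucial elementary observation that makes the target set $P(A)$ non-empty is that the boundary $\partial A$ of a \emph{closed} set $A$ is always nowhere dense: since $A$ is closed we have $\partial A=A\setminus A^\circ$, and any open $U\In\partial A$ would satisfy $U\In A$, hence $U\In A^\circ$, contradicting $U\In A\setminus A^\circ$ unless $U=\emptyset$. Thus $(\partial A)^\circ=\emptyset$, so the witness we must produce is essentially $\partial A$ itself (or any nowhere dense closed superset of it), and the whole difficulty is representational: we are handed $A$ by $\psi_-$ and must output a $\psi_+$-name of such a $B$.

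First I would turn Fact~\ref{fact:boundary} into a statement about the jump. Since $\partial\leqSW\lim$, there are computable $H,K$ with $H\circ\lim\circ K\vdash\partial$; given a $\psi_-$-name $q$ of $A$, the string $K(q)$ codes a sequence $(r_n)_n$ converging to $r:=\lim K(q)$, and $H(r)$ is a $\psi_-$-name of $\partial A$. As $H$ is computable and hence continuous, the computable sequence $(H(r_n))_n$ converges to $H(r)$, so it is a $\psi_-'$-name of $\partial A$; in other words, $\partial\colon\AA_-(X)\to\AA_-(X)'$ is computable. By Proposition~\ref{prop:closed-sets} we have $\psi_-'\equiv\psi_*$, so we have in fact computed a $\psi_*$-name (a cluster-point presentation) of the nowhere dense set $\partial A$.

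Finally I would feed this $\psi_*$-name into the computable map $M\colon\AA_*(X)\mto\AA_+(X)$ from Proposition~\ref{prop:cluster-closure}. Its defining properties give a set $B:=M(\partial A)\in\AA_+(X)$ with $\partial A\In B$, and, because $X$ is perfect and $\partial A$ is nowhere dense, $B$ is nowhere dense as well; thus $B\in P(A)$. Composing these computable maps shows that from a $\psi_-$-name of $A$ we can compute a $\psi_+$-name of some $B\in P(A)$, which is exactly the computability of the multi-valued map $P$. The main obstacle is the passage in the second step from ``limit computable'' to ``computable into the jump'': one must check that the approximating strings $r_n$ stay in (or can be pushed into) the domain of $H$ so that $H(r_n)\to H(r)$ genuinely holds. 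This is routine once one assumes $H$ total, and it is the only point where care is needed, the perfectness hypothesis entering solely through the nowhere-density clause of Proposition~\ref{prop:cluster-closure}.
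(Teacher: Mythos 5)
Your proof is correct and follows essentially the same route as the paper: the paper's one-sentence justification is exactly Fact~\ref{fact:boundary} plus Proposition~\ref{prop:cluster-closure} plus the observation that the boundary of a closed set is nowhere dense, with Proposition~\ref{prop:closed-sets} implicitly supplying the identification $\psi_-'\equiv\psi_*$ that you make explicit. The only loose thread in your write-up --- passing from $\partial\leqSW\lim$ to computability of $\partial:\AA_-(X)\to\AA_-(X)'$ without the unjustified assumption that $H$ is total --- is a standard fact (one applies the monotone word function computing $H$ to longer and longer prefixes of the approximating sequence $r_n$, which yields a computable sequence converging to $H(r)$), and the paper itself relies on the same conversion right after Fact~\ref{fact:+-}.
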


This follows from Fact~\ref{fact:boundary} together with Proposition~\ref{prop:cluster-closure}, given
that the boundary of a closed set is always nowhere dense.

\section{Parallelizability}
\label{sec:parallelizability}

In this section we want to prove, among other things, that $\BCT_0$ and $\BCT_2$ are both parallelizable.  
Since $\BCT_1\equivSW\C_\IN$ by Fact~\ref{fact:BCT0-BCT1} and $\BCT_3\equivSW\CL_\IN$ by Theorem~\ref{thm:jumps} for perfect Polish spaces,
it is clear that $\BCT_1$ and $\BCT_3$ are not parallelizable. In fact, we obtain the following corollary. 

\begin{corollary}
\label{cor:parallelizability-BCT13}
$\widehat{\BCT_1}\equivSW\lim$ and $\widehat{\BCT_3}\equivSW\lim'$ for all computable perfect Polish spaces.
\end{corollary}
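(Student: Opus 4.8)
The plan is to reduce everything to the known classifications $\BCT_1\equivSW\C_\IN$ and $\BCT_3\equivSW\CL_\IN$ and then to compute the two parallelizations. Since $f\mapsto\widehat f$ is a closure operator, and in particular monotone with respect to $\leqSW$, the equivalence $\BCT_1\equivSW\C_\IN$ from Fact~\ref{fact:BCT0-BCT1} and the equivalence $\BCT_3\equivSW\CL_\IN$ from Theorem~\ref{thm:jumps} (which is where perfectness of $X$ enters) immediately give $\widehat{\BCT_1}\equivSW\widehat{\C_\IN}$ and $\widehat{\BCT_3}\equivSW\widehat{\CL_\IN}$. Thus it suffices to prove the two parallelization facts $\widehat{\C_\IN}\equivSW\lim$ and $\widehat{\CL_\IN}\equivSW\lim'$.

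First I would establish $\widehat{\C_\IN}\equivSW\lim$. For the upper bound, $\C_\IN$ is limit computable, so $\C_\IN\leqW\lim$, and since $\lim$ is a cylinder this is already a strong reduction; parallelizing and using that $\lim$ is strongly parallelizable yields $\widehat{\C_\IN}\leqSW\widehat{\lim}\equivSW\lim$. For the lower bound I would invoke the standard equivalence $\widehat{\LPO}\equivW\lim$ together with $\LPO\leqW\C_\IN$, so that monotonicity of parallelization gives $\lim\equivW\widehat{\LPO}\leqW\widehat{\C_\IN}$. To upgrade this to a strong reduction I would verify that $\widehat{\C_\IN}$ is a cylinder: as $\C_\IN$ is pointed one has $\id_{\Baire}\leqSW\widehat{\C_\IN}$ (feed in names of singletons), and $\widehat{\C_\IN}$ is strongly idempotent, whence $\id\times\widehat{\C_\IN}\equivSW\widehat{\C_\IN}$. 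For a cylinder, ordinary reductions into it are automatically strong, so $\lim\leqSW\widehat{\C_\IN}$.

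For $\widehat{\CL_\IN}$ I would route through the jump. By Fact~\ref{fact:cluster} we have $\CL_\IN\equivSW\C_\IN'$, hence $\widehat{\CL_\IN}\equivSW\widehat{\C_\IN'}$. I would then use that parallelization commutes with the jump, $\widehat{f'}\equivSW(\widehat f)'$, which comes down to the representation equivalence $(\delta^\IN)'\equiv(\delta')^\IN$: a coordinatewise-convergent name of a sequence of points reshuffles computably into a sequence of convergent names, and back. Combining this with the already-established $\widehat{\C_\IN}\equivSW\lim$ and the monotonicity of the jump with respect to $\leqSW$ (so that it preserves $\equivSW$), I obtain $\widehat{\CL_\IN}\equivSW\widehat{\C_\IN'}\equivSW(\widehat{\C_\IN})'\equivSW\lim'$.

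The main obstacle is the lower bound at the level of strong reducibility. The natural construction witnessing $\lim\leqW\widehat{\C_\IN}$ locates, via countably many parallel $\Sigma^0_1$ queries resolved by copies of $\C_\IN$, a stabilization stage for each coordinate and then reads off the corresponding value of the original input; its output modifier therefore consults the input and gives only an ordinary reduction. The real work is to bypass this by the cylinder argument above, so that no direct access to the input is needed. The second delicate point is the commutation $\widehat{f'}\equivSW(\widehat f)'$, where one must check carefully that the jump of the countable-product representation is equivalent to the countable product of the jumped representations, i.e. the coordinatewise reshuffling of limits; everything else is bookkeeping with the monotonicity and closure properties of parallelization and the jump.
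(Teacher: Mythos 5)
Your proposal is correct and takes essentially the same route as the paper: both arguments reduce the statement via $\BCT_1\equivSW\C_\IN$ and $\BCT_3\equivSW\CL_\IN\equivSW\C_\IN'$ (perfectness entering only through Theorem~\ref{thm:jumps}) to the two ingredients $\widehat{\C_\IN}\equivSW\lim$ and the commutation of parallelization with jumps. The only difference is that the paper simply cites these ingredients (\cite[Example~3.10]{BBP12} for $\widehat{\C_\IN}\equivSW\lim$, noting both sides are cylinders, and \cite[Proposition~5.7(3)]{BGM12} for the commutation), whereas you prove them from scratch---via $\widehat{\LPO}\equivW\lim$ plus the cylinder argument, and via the representation identity $(\delta^\IN)'\equiv(\delta')^\IN$, which is exactly the fact the paper itself invokes in the proof of Proposition~\ref{prop:BCT02-parallelizability}---and those proofs are sound.
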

\begin{proof}
We have $\widehat{\C_\IN}\equivSW\lim$ by \cite[Example~3.10]{BBP12} (where the equivalence is strict since both problems are cylinders) and hence $\widehat{\CL_\IN}\equivSW\lim'$ since parallelization commutes with jumps by \cite[Proposition~5.7(3)]{BGM12}.
\end{proof}

The first statement that $\widehat{\BCT_1}\equivSW\lim$ does not require perfectness and in case of non-perfect spaces one obtains $\widehat{\BCT_3}\equivSW\id$ by Corollary~\ref{cor:dichotomy}.

We recall that in \cite{BHK15} a problem $f$ was called {\em $\omega$--discriminative} if $\ACC_\IN\leqW f$ and {\em $\omega$--indiscriminative} otherwise.
Here $\ACC_\IN$ is the problem $\C_\IN$ restricted to $\dom(\ACC_\IN)=\{A:|\IN\setminus A|\leq 1\}$; hence the name ``all-or-co-unique choice''.
A problem $f$ is called {\em discriminative}, if $\C_2\equivSW\LLPO\leqW f$ and {\em indiscriminative} otherwise. 
Since $\ACC_\IN<\C_2$, it is clear that discriminative implies $\omega$--discriminative, but not conversely. 

It is easy to see that $\BCT_1$ and $\BCT_3$ are both discriminative (where we consider the latter for perfect $X$).
This follows from 
\[\C_2\leqSW\C_\IN\equivSW\BCT_1\leqSW\CL_\IN\equivSW\BCT_3.\]
On the other hand, $\BCT_0$ and $\BCT_2$ are both $\omega$--indiscriminative and hence also indiscriminative: since $\BCT_0$ and $\BCT_2$ are each densely realized,\footnote{A notion introduced in \cite{BHK15}, which roughly speaking, says that the image of $\BCT_0$ and $\BCT_2$ is densely covered over all realizers.}
by the Baire Category Theorem~(\ref{thm:BCT-strong}) itself, this follows from \cite[Proposition~4.3]{BHK15}. Moreover, every jump of $\BCT_0$ or $\BCT_2$ is also $\omega$--indiscriminative, since it is merely a property of the image.

\begin{proposition}
\label{prop:BCT02-indiscriminative}
$\BCT_0^{(n)}$ and $\BCT_2^{(n)}$ are both densely realized and hence $\omega$--indiscriminative for all $n\in\IN$ and each computable Polish space $X$.
\end{proposition}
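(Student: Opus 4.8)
The plan is to reduce everything to the single observation that being densely realized is a property of the \emph{output} behaviour of a problem, and that this property is visibly inherited by every jump. First I would record why $\BCT_0$ and $\BCT_2$ are densely realized at all. For any admissible input $(A_i)_i$, that is, a sequence of nowhere dense closed sets, the value set is
\[
\BCT_{0,X}(A_i)=\BCT_{2,X}(A_i)=X\setminus\bigcup_{i=0}^\infty A_i=\bigcap_{i=0}^\infty(X\setminus A_i),
\]
a countable intersection of dense open sets. By the strong Baire Category Theorem~\ref{thm:BCT-strong} this set is dense in $X$. Since the output space $X$ carries the Cauchy representation $\delta_X$, any finite prefix of a valid name pins the limit down only to a shrinking rational ball, and density of the value set lets us continue that prefix to a name of an output point lying in the ball. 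Hence the set of output names for each admissible input is dense in $\dom(\delta_X)$, which is exactly what it means for $\BCT_0$ and $\BCT_2$ to be densely realized.

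The second step is to transfer this to the jumps. By definition the jump $f\mapsto f'$ replaces only the input representation $\delta_X$ by $\delta_X'=\delta_X\circ\lim$, leaving both the assignment $x\mapsto f(x)$ of value sets and the output representation untouched; moreover $\delta_X'$ represents the same points as $\delta_X$, so the domain points are unchanged. Consequently $\BCT_0^{(n)}$ and $\BCT_2^{(n)}$ have, for every $n$, exactly the same (dense) value sets and the same output representation as $\BCT_0$ and $\BCT_2$. Therefore the density-of-output-names condition is literally the same statement for the jumps as for the base problems, and a trivial induction on $n$ shows that each $\BCT_0^{(n)}$ and $\BCT_2^{(n)}$ is densely realized.

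Finally I would invoke \cite[Proposition~4.3]{BHK15}, which asserts that every densely realized problem is $\omega$--indiscriminative, i.e.\ satisfies $\ACC_\IN\nleqW f$. Applying it to each $\BCT_0^{(n)}$ and $\BCT_2^{(n)}$ yields the claim. The only place where something could go wrong — and thus the point to be careful about — is the preservation under jumps: one must be sure that ``densely realized'' is formulated purely in terms of the value sets and the output representation, with no reference to the input names. Since this is indeed how the notion is set up in \cite{BHK15}, the jump, acting solely on the input side, cannot disturb it; this is the precise content of the informal remark that it is ``merely a property of the image''. I note in passing that the argument does not require $X$ to be perfect, since Theorem~\ref{thm:BCT-strong} gives density of the value sets for every complete metric space, so the non-perfect case is covered as well (even though there $\BCT_2$ collapses to a computable degree by Proposition~\ref{prop:non-perfect}).
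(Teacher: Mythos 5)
Your proposal is correct and takes essentially the same approach as the paper: dense realizedness of $\BCT_0$ and $\BCT_2$ via Theorem~\ref{thm:BCT-strong}, the black-box application of \cite[Proposition~4.3]{BHK15}, and the observation that the jump alters only the input representation while representing the same points, which is precisely what the paper compresses into the remark that being densely realized ``is merely a property of the image''. Your write-up merely supplies the prefix-extension and induction details that the paper leaves implicit.
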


This property can even be transferred to intersections of these problems, which we formally define next.

\begin{definition}[Intersection]
Let $f:\In X\mto Z$ and $g:\In Y\mto Z$ be multi-valued functions on represented spaces.
Then we define $f\cap g:\In X\times Y\mto Z$ by 
\[(f\cap g)(x,y):=f(x)\cap g(y)\] 
and $\dom(f\cap g):=\{(x,y)\in X\times Y:f(x)\cap g(y)\not=\emptyset\}$.
Let $f_i:\In X\mto Y$ be a sequence of multi-valued functions on represented spaces. 
Then we define $\bigcap_{i=0}^\infty f_i:\In X^\IN\mto Y^\IN$ by
\[\left(\bigcap_{i=0}^\infty f_i\right)(x_i)_i:=\bigcap_{i=0}^\infty f_i(x_i)\]
and $\dom(\bigcap_{i=0}^\infty f_i):=\{(x_i)_i:\bigcap_{i=0}^\infty f_i(x_i)\not=\emptyset\}$.
\end{definition}

We mention that in case of $\dom(f\cap g)=\dom(f)\times\dom(g)$ and $\dom(\bigcap_{i=0}^\infty f)=\dom(f_i)^\IN$
we obtain
\[f\times g\leqSW f\cap g\mbox{ and }\widehat{f}\leqSW\bigcap_{i=0}^\infty f_i\]
respectively.
For pointed $f,g$ we also have $f\sqcup g\leqSW f\times g$, which implies $f\leqSW f\cap g$ and $g\leqSW f\cap g$.
We note that $\BCT_0$ and $\BCT_2$ are both pointed, since they contain the constant sequence of the empty set
in their domains. Due to the Baire Category Theorem itself we can mix the two problems $\BCT_0$ and $\BCT_2$ and their
jumps without losing any points in the domain. We make this statement precise.

\begin{lemma}
\label{lem:BCT02-dom}
For a fixed computable Polish spaces, we have that 
\begin{enumerate}
\item $\dom(\BCT_i^{(n)}\cap\BCT_j^{(k)})=\dom(\BCT_i^{(n)})\times\dom(\BCT_j^{(k)})$ and
\item $\dom(\bigcap_{i=0}^\infty\BCT_j^{(n)})=\dom(\BCT_j^{(n)})^\IN$ 
\end{enumerate}
for all $i,j\in\{0,2\}$ and $n,k\in\IN$.
\end{lemma}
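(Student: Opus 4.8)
The plan is to exploit the fact that the jump operation alters only the input representation and leaves the image untouched, so that for each $i\in\{0,2\}$ and each $n$ the map $\BCT_i^{(n)}$ sends (a name of) a sequence $(A_\ell)_\ell$ of nowhere dense closed sets to the same set of uncovered points $X\setminus\bigcup_{\ell=0}^\infty A_\ell$ as $\BCT_i$ itself. In particular all the problems occurring here share the common target space $X$, which is exactly what makes the intersections $\BCT_i^{(n)}\cap\BCT_j^{(k)}$ and $\bigcap_{i=0}^\infty\BCT_j^{(n)}$ well defined. One inclusion in each part is then immediate: since $f(x)$ is only defined for $x\in\dom(f)$, we automatically have $\dom(\BCT_i^{(n)}\cap\BCT_j^{(k)})\In\dom(\BCT_i^{(n)})\times\dom(\BCT_j^{(k)})$ and $\dom(\bigcap_{i=0}^\infty\BCT_j^{(n)})\In\dom(\BCT_j^{(n)})^\IN$. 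Hence the entire content of the lemma lies in the reverse inclusions, i.e.\ in showing that the relevant intersections of uncovered-point sets are non-empty.

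For part (1) I would take $x\in\dom(\BCT_i^{(n)})$ and $y\in\dom(\BCT_j^{(k)})$ naming sequences $(A_\ell)_\ell$ and $(A'_\ell)_\ell$ of nowhere dense closed sets respectively. By the observation above,
\[\BCT_i^{(n)}(x)\cap\BCT_j^{(k)}(y)=\Big(X\setminus\bigcup_{\ell=0}^\infty A_\ell\Big)\cap\Big(X\setminus\bigcup_{\ell=0}^\infty A'_\ell\Big)=X\setminus\Big(\bigcup_{\ell=0}^\infty A_\ell\cup\bigcup_{\ell=0}^\infty A'_\ell\Big).\]
The subtracted set is a countable union of nowhere dense closed sets (relabel the two sequences into a single one), hence meager, so by the Baire Category Theorem~\ref{thm:BCT} its complement in the complete metric space $X$ is non-empty (indeed dense by Theorem~\ref{thm:BCT-strong}). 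This shows $(x,y)\in\dom(\BCT_i^{(n)}\cap\BCT_j^{(k)})$ and completes the reverse inclusion.

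For part (2) the argument is identical up to one reindexing step: given $(x_m)_m\in\dom(\BCT_j^{(n)})^\IN$, each $x_m$ names a sequence $(A^{(m)}_\ell)_\ell$ of nowhere dense closed sets, and
\[\bigcap_{m=0}^\infty\BCT_j^{(n)}(x_m)=X\setminus\bigcup_{m,\ell}A^{(m)}_\ell\]
is again the complement of a meager set, since a countable union of countably many nowhere dense closed sets is still a countable union of nowhere dense closed sets. The Baire Category Theorem~\ref{thm:BCT} again yields non-emptiness. I do not expect a serious obstacle here; the only points requiring care are the bookkeeping that the jump does not change the image (so that the value of each $\BCT_i^{(n)}$ really is the uncovered-point set $X\setminus\bigcup A_\ell$ for every $n$) and that finite and countable unions of nowhere dense closed sets remain meager, so that the Baire Category Theorem applies uniformly across all $i,j\in\{0,2\}$ and all $n,k\in\IN$.
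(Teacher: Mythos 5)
Your proof is correct and matches the paper's intended argument: the paper states the lemma without a separate proof, justifying it by the remark immediately preceding it that ``due to the Baire Category Theorem itself we can mix the two problems $\BCT_0$ and $\BCT_2$ and their jumps without losing any points in the domain.'' Your write-up simply makes this explicit -- the jump changes only the input representation so every value set is the complement of a countable union of nowhere dense closed sets, and Theorem~\ref{thm:BCT} then gives non-emptiness of the relevant intersections.
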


All the intersections mentioned in this lemma are also densely realized.
We mention that it follows from Proposition~\ref{prop:BCT02-indiscriminative} and \cite[Proposition~4.3]{BHK15}
that $\BCT_0^{(n)}$ and $\BCT_2^{(n)}$ are all not cylinders. 
We obtain the following result.

\begin{proposition}[Parallelizability]
\label{prop:BCT02-parallelizability}
$\BCT_0^{(n)}$ and $\BCT_2^{(n)}$ are strongly parallelizable and strongly idempotent for every computable Polish space and $n\in\IN$.
\end{proposition}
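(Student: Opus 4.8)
The plan is to establish strong parallelizability of $\BCT_0$ and $\BCT_2$ directly by routing the parallelization through a single instance via the intersection operation, and then to propagate this property to all jumps using the commutation of parallelization with the jump. Strong idempotency will then follow from strong parallelizability together with pointedness. Throughout I would work with the strong variant of every reduction, since the whole point is to obtain $\equivSW$ and not merely $\equivW$.

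First, for the base case $n=0$ and $j\in\{0,2\}$, I would prove $\widehat{\BCT_j}\equivSW\BCT_j$ through the chain of strong reductions
\[\widehat{\BCT_j}\leqSW\bigcap_{i=0}^\infty\BCT_j\leqSW\BCT_j.\]
The left reduction is the remark following the Intersection definition: by Lemma~\ref{lem:BCT02-dom} we have $\dom(\bigcap_{i}\BCT_j)=\dom(\BCT_j)^\IN$, so the two problems share an input space and domain, and given the output $x\in\bigcap_i(X\setminus\bigcup_j A_{i,j})$ of the intersection, the output modifier $H$ simply returns the constant sequence $(x)_i$, which is a legal value of $\widehat{\BCT_j}$ precisely because $x$ avoids every $A_{i,j}$ simultaneously. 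This is exactly where the intersection structure is indispensable: under strong reducibility $H$ has no access to the original input, so a single uniform witness point is needed, and the naive coordinatewise recomputation is unavailable. The right reduction is flattening: a bijective pairing $\langle i,j\rangle$ reindexes the doubly-indexed family $((A_{i,j})_j)_i$ into a single sequence $(A_{\langle i,j\rangle})$, an operation that is computable on $\psi_-$- and $\psi_+$-names (it only reshuffles the sequence of sets, taking no unions) and that sends a valid intersection instance to a valid $\BCT_j$ instance, namely a sequence of nowhere dense closed sets. Since $\bigcap_i(X\setminus\bigcup_j A_{i,j})=X\setminus\bigcup_{i,j}A_{i,j}$, the point returned by $\BCT_j$ is exactly what is required, with $H=\id$. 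Both reductions are strong, and together with the trivial $\BCT_j\leqSW\widehat{\BCT_j}$ they yield strong parallelizability of $\BCT_0$ and $\BCT_2$. The argument is uniform in the space and uses no perfectness; existence of the witness is guaranteed by the Baire Category Theorem~\ref{thm:BCT}.

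Second, I would lift this to all jumps by induction on $n$. Since the jump operation is monotone for $\leqSW$ it preserves $\equivSW$, and parallelization commutes with the jump, $\widehat{f'}\equivSW\widehat{f}'$ (the fact used in Corollary~\ref{cor:parallelizability-BCT13}). Hence if $f\equivSW\widehat{f}$, then $f'\equivSW\widehat{f}'\equivSW\widehat{f'}$, so $f'$ is again strongly parallelizable. Starting from the base cases $\BCT_0$ and $\BCT_2$, this gives strong parallelizability of $\BCT_0^{(n)}$ and $\BCT_2^{(n)}$ for every $n\in\IN$.

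Finally, strong idempotency follows from strong parallelizability together with pointedness. Each $\BCT_j^{(n)}$ is pointed: the constant sequence of empty sets is a computable point of $\dom(\BCT_j)$, and a computable point has a computable name under every jump of the input representation. For pointed $f$ one has $f\leqSW f\times f$, while $f\times f\leqSW\widehat{f}$ by interleaving two inputs and padding the remaining coordinates with the computable point; combined with $\widehat{f}\equivSW f$ this gives $f\equivSW f\times f$, all strongly, and applying this to each $\BCT_j^{(n)}$ completes the proof. I expect the main obstacle to be the careful verification that the two base-case reductions are genuinely strong---in particular that the constant-sequence modifier suffices although $H$ cannot see the input, and that the flattening map is computable on both the negative and positive representations; once the base case is secured, the passage to jumps is essentially formal given the cited commutation and monotonicity properties.
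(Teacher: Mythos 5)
Your proof is correct and takes essentially the same approach as the paper: the base case is the identical flattening-plus-constant-sequence construction (you merely factor it explicitly through the intersection operation and Lemma~\ref{lem:BCT02-dom}, whereas the paper writes down the composite $H\circ\BCT_k\circ K$ directly), and your inductive treatment of jumps via monotonicity of the jump under $\leqSW$ and the commutation $\widehat{f'}\equivSW\widehat{f}\,'$ is the same fact the paper invokes in the form $[\delta^\IN]'\equiv(\delta')^\IN$. The derivation of strong idempotency from strong parallelizability plus pointedness likewise matches the paper's concluding remark.
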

\begin{proof}
The map $K$ with $((A_{j,i})_{i\in\IN})_{j\in\IN}\mapsto(A_{j,i})_{\langle i,j\rangle\in\IN}$ that maps sequences of sequences
in $\AA(X)$ to a single sequence in $\AA(X)$ is computable with respect to positive and negative information
as well as the map $H$ that maps a point $x\in X$ to the constant sequence with value $x$.
Since 
\[\bigcap_{j=0}^\infty\left(X\setminus\bigcup_{i=0}^\infty A_{j,i}\right)=X\setminus\bigcup_{i=0}^\infty\bigcup_{j=0}^\infty A_{j,i},\]
we have that for each $k\in\{0,2\}$ 
\[\left(\BCT_k(A_{j,i})_{\langle i,j\rangle\in\IN}\right)^\IN=\left(\bigcap_{j=0}^\infty\BCT_k(A_{j,i})_{i\in\IN}\right)^\IN\In\widehat{\BCT_k}(((A_{j,i})_{i\in\IN})_{j\in\IN}),\]
and so $H\circ\BCT_k\circ K(A)\In\widehat{\BCT_k}(A)$ for each sequence of sequences $A$.
This proves the claim on strong parallelizability for $\BCT_k$. For the general case of $\BCT_k^{(n)}$ with $n\in\IN$ we additionally
note that $[\delta^\IN]'\equiv(\delta')^\IN$ for every representation $\delta$. 
Strong idempotency follows from strong parallelizability since $\BCT_k^{(n)}$ is pointed for all $n\in\IN$.  
\end{proof}

\section{The Baire Category Theorem on Perfect Polish Spaces}
\label{sec:perfect-Polish}

In this section we prove that the Baire Category Theorem $\BCT_0$ defines a single equivalence class
for all computable perfect Polish spaces, which include Cantor space $2^\IN$ and Baire space $\IN^\IN$.
By Fact~\ref{fact:BCT0-BCT1} this is already known for $\BCT_1$ and by Theorem~\ref{thm:jumps} the same applies to $\BCT_2$ and $\BCT_3$.
We subdivide the proof essentially into the following reduction chain 
\[\BCT_{0,X}\leqSW\BCT_{0,\IN^\IN}\leqSW\BWT_{0,2^\IN}\leqSW\BCT_{0,X}\]
and we start with a special version of the Cauchy representation.

\begin{lemma}[Cauchy representation]
\label{lem:Cauchy}
Let $(X,d,\alpha)$ be a computable Polish space. Then there exists a computable, surjective and total map $\delta:\IN^\IN\to X$
such that $\delta^{-1}(A)$ is nowhere dense in $\IN^\IN$ for every nowhere dense $A\In X$.
\end{lemma}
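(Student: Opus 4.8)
The plan is to realize $\delta$ as the branch-limit map of a computable tree of nested, shrinking rational balls, engineered so that \emph{every} cylinder is mapped onto a set with nonempty interior; the nowhere-density clause then comes essentially for free. The first thing I would do is reduce the target property to this interior condition. If $A$ is nowhere dense, then $A\In\overline A$ with $\overline A$ closed and nowhere dense, and since subsets of nowhere dense sets are nowhere dense and $\delta^{-1}(A)\In\delta^{-1}(\overline A)$, it suffices to treat closed $A$. For closed $A$ the set $\delta^{-1}(A)$ is closed (by continuity of the computable $\delta$), and a closed set is nowhere dense iff it contains no basic cylinder $[w]:=\{p:w\sqsubseteq p\}$, i.e.\ iff $\delta([w])\not\In A$ for every $w\in\IN^{<\IN}$. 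Hence it is enough to build a total computable surjection $\delta$ such that $\delta([w])$ has nonempty interior for all $w$: a nowhere dense $A$ cannot contain an open ball, so it cannot contain such a $\delta([w])$.

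For the construction I would assign to each nonempty string $w=(w_1,\dots,w_m)\in\IN^{<\IN}$ a rational open ball $B_w=B(\alpha(a_w),r_w)$ with $r_w\le 2^{-m}$, recursively, subject to (i) $\overline{B_{wi}}\In B_w$ for all $i$ and (ii) the children $(B_{wi})_{i\in\IN}$ cover the \emph{open} ball $B_w$. Concretely, the children of $B_w$ are taken to be an effective enumeration of all rational balls $B(\alpha(j),\rho)$ with $\rho\le 2^{-(m+1)}$ and $d(\alpha(j),\alpha(a_w))+\rho<r_w$ (this last condition forces $\overline{B(\alpha(j),\rho)}\In B_w$ and is c.e.\ since $d\circ(\alpha\times\alpha)$ is computable); there are always infinitely many such balls, and an elementary density argument shows they cover $B_w$. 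For the root I let the length-one balls be $B(\alpha(i),2^{-1})$, which cover $X$ by density of $\alpha$. Then I set $\delta(p):=\lim_n\alpha(a_{p\restriction n})$. Because consecutive centers satisfy $d(\alpha(a_{p\restriction n}),\alpha(a_{p\restriction(n+1)}))<r_{p\restriction n}\le 2^{-n}$, the sequence of centers is a fast Cauchy sequence, so completeness of $X$ makes $\delta$ \emph{total}, and it is computable because the whole ball tree is.

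It then remains to verify surjectivity and the interior property. For surjectivity, given $x\in X$ I choose at each level a child ball containing $x$ (possible by the covering property (ii)), producing a name $p$ with $\delta(p)=x$. For the interior property, every extension of $w$ stays inside $\overline{B_w}$, so $\delta([w])\In\overline{B_w}$; conversely, by the same level-by-level choice starting from $B_w$, every $x\in B_w$ has a name extending $w$, so $B_w\In\delta([w])$. Thus $\delta([w])$ contains the open ball $B_w$ (and $\delta(\IN^\IN)=X$ for the empty string), giving the required nonempty interior, and by the reduction in the first paragraph this yields the nowhere-density claim.

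The main obstacle is the simultaneous satisfaction of (i) and (ii): the children must have closures lying \emph{strictly inside} $B_w$, yet still cover the \emph{open} ball $B_w$, be effectively enumerable, and have radii shrinking to $0$, all while preserving surjectivity. Once the ``covering an open ball by interior sub-balls'' lemma is pinned down and the enumeration is checked to be computable, the totality, computability, surjectivity, and interior verifications are routine.
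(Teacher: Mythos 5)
Your proof is correct, but it takes a genuinely different route from the paper's. The paper keeps the standard Cauchy representation $\tilde{\delta}$ (restricted to fast Cauchy sequences) and composes it with a total computable ``repair'' function $f:\IN^\IN\to\IN^\IN$ that copies an input value when it is metrically consistent with the values already committed to and otherwise repeats the previous value; totality comes from $\range(f)\In\dom(\tilde{\delta})$, surjectivity from a sub-domain $D$ of tighter Cauchy sequences with $f(D)=D$, and nowhere-density from observing that a cylinder $w\IN^\IN\In\delta^{-1}(A)$ forces the nonempty open set $\bigcap_{k=0}^{n}B(\alpha(a_k),2^{-k-1})$ into $A$. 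You instead build a fresh representation from scratch: a computable tree of rational balls $B_w$ with shrinking radii, children contained in their parent, and children covering the open parent ball, and then read off $\delta$ as the branch-limit map. Both proofs hinge on exactly the same key insight --- every cylinder must be mapped onto a set with nonempty interior --- but they realize it differently. Your scheme buys a cleaner and more symmetric verification ($B_w\In\delta([w])\In\overline{B_w}$ gives the interior property, surjectivity, and nowhere-density in one stroke, after reducing to closed $A$ via closures and continuity), at the cost of having to establish the covering-by-interior-sub-balls lemma and the uniform computable enumeration of children (note you should also confirm, as you implicitly do, that the set of admissible children $(j,\rho)$ is c.e.\ and provably infinite so that every index $i\in\IN$ names a child and $\delta$ is total on all of $\IN^\IN$). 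The paper's proof is shorter because it recycles the existing Cauchy representation machinery, but its repair function $f$ is defined by overlapping semi-decidable conditions and is arguably more delicate to verify; your tree construction is also closer in spirit to what the paper later needs anyway (a uniform, relativized version over the balls $B_{n,k}$ in its Proposition on the dense Baire Category Theorem), so it would uniformize without extra work.
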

\begin{proof}
We consider the restricted Cauchy representation 
$\tilde{\delta}:\In\IN^\IN\to X,p\mapsto\lim_{n\to\infty}\alpha(p(n))$, with domain
\[\dom(\tilde{\delta}):=\{p\in\IN^\IN:(\forall k)(\forall n>k)\;d(\alpha p(n) ,\alpha p(k))\leq2^{-k-1}\}.\]
The map $\tilde{\delta}$ is well-defined and surjective since $X$ is complete. 
Since $\tilde{\delta}$ is a restriction of the usual Cauchy representation $\delta_X$ as defined in Section~\ref{sec:closed},
it follows that $\tilde{\delta}$ is computable (with respect to $\delta_X$).
There exists a total computable function $f:\IN^\IN\to\IN^\IN$ that satisfies
\[f(p)(n)=\left\{\begin{array}{ll}
  p(n) & \mbox{$\TO(\forall k<n)\;d(\alpha p(n),\alpha(f(p)(k)))<2^{-k-1}$}\\
  f(p)(n-1) & \mbox{$\TO(\exists k<n)\;d(\alpha p(n),\alpha(f(p)(k)))>2^{-k-2}$}
\end{array}\right.\]
for all $p\in\IN^\IN$ and $n\in\IN$.\footnote{We note that this equation does not define $f$, since
the conditions overlap; however the conditions can be verified and depending on which condition
is met first, the algorithm for $f$ chooses the corresponding case.} 
We obtain $\range(f)\In\dom(\tilde{\delta})$ and hence $\delta:=\tilde{\delta}\circ f$ is total.
It is clear that $\delta$ is also computable. 
We note that $\tilde{\delta}$ restricted to $D:=\{p\in\IN^\IN:(\forall k)(\forall n>k)\;d(\alpha p(n) ,\alpha p(k))\leq2^{-k-2}\}$
is still surjective 
and $f(D)=D$. Hence $\delta$ is surjective too. 
Finally, let $A\In X$ be such that $\delta^{-1}(A)$ is somewhere dense. Then there is a word $w\in\IN^*$ such that 
$w\IN^\IN\In\delta^{-1}(A)$. We let $p:=w000...$ and $a_k:=f(p)(k)$ for all $k=0,...,n$, where $n:=|w|-1$.
Then $\bigcap_{k=0}^n B(\alpha(a_k),2^{-k-1})$ is a nonempty subset of $A$, which implies that $A$ is somewhere dense.
\end{proof}

Hence we obtain the following result.

\begin{proposition}
\label{prop:Polish-Baire}
$\BCT_{0,X}\leqSW\BCT_{0,\IN^\IN}$ for every computable Polish space $X$.
\end{proposition}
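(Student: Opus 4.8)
The plan is to build a strong Weihrauch reduction $\BCT_{0,X}\leqSW\BCT_{0,\IN^\IN}$ using the special Cauchy representation $\delta:\IN^\IN\to X$ furnished by Lemma~\ref{lem:Cauchy}. The key feature of that lemma is twofold: $\delta$ is total, computable and surjective, and it pulls back nowhere dense sets to nowhere dense sets. This is exactly what is needed to transport an instance of $\BCT_0$ in $X$ to an instance in Baire space and to transport the answer back. First I would take an input $(A_i)_{i\in\IN}$ of $\BCT_{0,X}$, i.e.\ a sequence of nowhere dense closed sets in $\AA_-(X)$, and form the preimages $\delta^{-1}(A_i)\In\IN^\IN$. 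Since $\delta$ is computable and total, the map $A\mapsto\delta^{-1}(A)$ is computable as a map $\AA_-(X)\to\AA_-(\IN^\IN)$: a negative name of $A$ lists rational balls covering the complement of $A$, and pulling these back along the continuous map $\delta$ yields (an enumeration of) open sets covering the complement of $\delta^{-1}(A)$, so we obtain a $\psi_-$-name of the closed set $\delta^{-1}(A)$. By the nowhere-density-preservation clause of Lemma~\ref{lem:Cauchy}, each $\delta^{-1}(A_i)$ is again nowhere dense, so $(\delta^{-1}(A_i))_i$ is a legitimate instance of $\BCT_{0,\IN^\IN}$.

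Next I would feed $(\delta^{-1}(A_i))_i$ to the oracle $\BCT_{0,\IN^\IN}$, which returns some point $q\in\IN^\IN\setminus\bigcup_i\delta^{-1}(A_i)$. The output modifier $H$ then simply applies $\delta$, setting $x:=\delta(q)$. I claim $x\in X\setminus\bigcup_i A_i$: indeed $q\notin\delta^{-1}(A_i)$ for every $i$ means $\delta(q)\notin A_i$ for every $i$, so $x$ avoids the union, as required. Because $\delta$ is total and computable, $H=\delta$ is a computable realizer-level map, and crucially it depends only on the oracle output $q$ and not on the original input---so the reduction is \emph{strong}. The input preprocessing $K$ (computing the sequence of preimages) is likewise computable and input-only, giving exactly the form $H\circ G\circ K\vdash\BCT_{0,X}$ demanded by $\leqSW$.

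The main obstacle I anticipate is verifying carefully that the preimage map $A\mapsto\delta^{-1}(A)$ is genuinely computable on $\AA_-$, i.e.\ that negative information transfers correctly through $\delta$. Concretely, one must check that from a $\psi_-$-name of $A$ (an enumeration of rational balls whose union is $X\setminus A$) one can compute a $\psi_-$-name of $\delta^{-1}(A)$ in $\IN^\IN$. This amounts to showing that $\delta^{-1}(X\setminus A)=\IN^\IN\setminus\delta^{-1}(A)$ is effectively open given the data, which follows from continuity of $\delta$ together with the standard fact that the preimage of an enumerated open set under a computable total map is again an effectively enumerable open set. One should also confirm the edge case where some $A_i$ is empty (so $\delta^{-1}(A_i)=\emptyset$), which poses no difficulty. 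Once this computability of $K$ is in hand, the nowhere-density preservation from Lemma~\ref{lem:Cauchy} guarantees the transported instance lies in the domain of $\BCT_{0,\IN^\IN}$, and the remaining verification that $\delta(q)$ lands in the complement of $\bigcup_i A_i$ is the elementary set-theoretic computation indicated above.
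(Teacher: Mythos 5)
Your proposal is correct and is essentially the paper's own proof: both pull the sets back along the total computable surjection $\delta$ from Lemma~\ref{lem:Cauchy} (whose nowhere-density preservation puts $(\delta^{-1}(A_i))_i$ in $\dom(\BCT_{0,\IN^\IN})$), apply the oracle, and push the resulting point forward with $H=\delta$, which uses only the oracle output and hence gives a strong reduction. Your set-theoretic verification that $\delta(q)$ avoids $\bigcup_i A_i$ is in fact stated more carefully than in the paper, whose displayed computation conflates $X\setminus\bigcup_i A_i$ with $\bigcup_i(X\setminus A_i)$.
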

\begin{proof}
Lemma~\ref{lem:Cauchy} implies that the map
$\delta^{-1}:\AA_-(X)\to\AA_-(\IN^\IN),A\mapsto \delta^{-1}(A)$ is well-defined, it is computable since $\delta$ is computable,
it preserves nowhere density and it maps non-empty sets to non-empty sets since $\delta$ is surjective.
Now, given a sequence $(A_i)_i$ of nowhere dense closed sets, we can compute $(\delta^{-1}(A_i))_i$
and if $p\in\BCT_{0,\IN^\IN}(\delta^{-1}(A_i))_i=\bigcup_{i=0}^\infty\left(\IN^\IN\setminus\delta^{-1}(A_i)\right)$,
then we obtain
\[\delta(p)\in\bigcup_{i=0}^\infty(\delta(\IN^\IN\setminus\delta^{-1}(A_i)))=\bigcup_{i=0}^\infty(X\setminus A_i)=\BCT_{0,X}(A_i).\]
By Lemma~\ref{lem:Cauchy} $\delta$ is computable and hence $\BCT_{0,X}\leqSW\BCT_{0,\IN^\IN}$.
\end{proof}

A map $f:X\into Y$ is called a {\em computable embedding} if it is injective and both $f$ and its partial inverse $f^{-1}$ are computable. In \cite{BG09} a computable metric space was called {\em rich} if there is a computable embedding $\iota:2^\IN\into X$.
In \cite[Proposition~6.2]{BG09} the following was proved using a Cantor scheme. 

\begin{fact}
\label{fact:perfect-rich}
Every perfect computable Polish space is rich.
\end{fact}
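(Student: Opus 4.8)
The plan is to construct a computable embedding $\iota:2^\IN\into X$ by building a Cantor scheme whose cells have radii shrinking to zero, using the perfectness of $X$ to keep the two children of every cell at a definite positive distance apart. First I would fix a computable dense sequence $\alpha$ and, working recursively on binary words $w\in\Seqd$, assign to each $w$ a center point $c_w\in\range(\alpha)$ together with a rational radius $r_w>0$ so that the closed ball $\overline{B}(c_w,r_w)$ contains both child balls $\overline{B}(c_{w0},r_{w0})$ and $\overline{B}(c_{w1},r_{w1})$, these two child balls are disjoint, and $r_w\leq 2^{-|w|}$. The key use of perfectness is that at each stage, since $c_w$ is not isolated, I can find within any neighborhood of $c_w$ two distinct points from the dense sequence $\alpha$; I then take these (or nearby $\alpha$--points) as the two children $c_{w0},c_{w1}$ and shrink their radii enough to separate them and to fit inside the parent ball. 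All of this is a computable search over the dense sequence, because $d\circ(\alpha\times\alpha)$ is computable, so I can effectively verify the required strict distance inequalities.

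Having built the scheme, I would define $\iota(x):=\lim_{n\to\infty}c_{x\upharpoonright n}$ for $x\in 2^\IN$. Completeness of $X$ guarantees the limit exists, since the centers $c_{x\upharpoonright n}$ form a Cauchy sequence (their radii are summable), and the map is computable because from any prefix of $x$ I can read off a center and a bounding radius, yielding a fast Cauchy name for $\iota(x)$. Injectivity follows from the disjointness of sibling balls: if $x\neq y$ differ first at position $n$, then $\iota(x)\in\overline{B}(c_{x\upharpoonright(n+1)},\cdot)$ and $\iota(y)\in\overline{B}(c_{y\upharpoonright(n+1)},\cdot)$, and these two balls are the disjoint children of a common parent, so $\iota(x)\neq\iota(y)$.

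The remaining and more delicate point is computability of the partial inverse $\iota^{-1}$. Given a name of a point $z=\iota(x)$, I must recover $x$ bit by bit, and the separation built into the scheme is exactly what makes this possible: the two candidate children $\overline{B}(c_{w0},r_{w0})$ and $\overline{B}(c_{w1},r_{w1})$ are separated by a positive gap, so once my approximation of $z$ is accurate enough I can decide which child ball contains $z$ and commit to the corresponding next bit of $x$. To make this rigorous I would record, alongside each pair of sibling balls, a positive rational lower bound on the distance between them, and argue that a sufficiently precise rational approximation to $z$ lies strictly inside exactly one of the two children; this decision is computable from the Cauchy name of $z$ and the (computable) scheme data.

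The main obstacle I expect is the bookkeeping needed to guarantee, uniformly and computably at every node, that the two children can be chosen as genuine balls of the dense sequence that are simultaneously disjoint from each other and contained in the parent, with radii under control; perfectness supplies the existence of two distinct nearby points, but turning ``there exist two distinct points'' into an effective, strictly--verified choice of centers and radii (with strict inequalities so that the verification halts) is the step that requires care. Once the scheme is set up with explicit separation constants, everything else is a routine Cauchy--limit and case--distinction argument.
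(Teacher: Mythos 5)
Your proposal is correct and takes essentially the same approach as the paper, which obtains this Fact by citing \cite[Proposition~6.2]{BG09}, itself proved via exactly the kind of Cantor scheme you describe (two separated children per node, chosen from the dense sequence using perfectness, with radii controlled so that both $\iota$ and $\iota^{-1}$ become computable). One refinement to make when formalizing the inverse: arrange the scheme so each child's closed ball lies inside its parent's ball with definite slack (say, inside the ball of half the parent's radius), so that $\iota(x)$ is strictly interior to every ball along its branch; otherwise a limit point sitting exactly on the boundary of its child ball could defeat your test that ``a sufficiently precise rational approximation lies strictly inside exactly one of the two children,'' since in a general metric space approximations of a boundary point need not enter the ball at all.
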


For rich computable Polish spaces we obtain the following reduction.

\begin{proposition}
\label{prop:Cantor-rich-Polish}
$\BCT_{0,2^\IN}\leqSW\BCT_{0,X}$ for every rich computable Polish space $X$.
\end{proposition}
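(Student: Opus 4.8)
The goal is $\BCT_{0,2^\IN}\leqSW\BCT_{0,X}$ for a rich computable Polish space $X$. By Fact~\ref{fact:perfect-rich}, richness gives a computable embedding $\iota:2^\IN\into X$ with computable partial inverse $\iota^{-1}$. So I have a computable copy $\iota(2^\IN)$ of Cantor space sitting inside $X$. The plan is to transport a sequence of nowhere dense closed sets in $2^\IN$ into a sequence of nowhere dense closed sets in $X$, apply $\BCT_{0,X}$ to get an uncovered point of $X$, and pull it back to an uncovered point of $2^\IN$.

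\textbf{Setting up the reduction.} First I would note that since $\iota$ is a computable embedding, the image $K:=\iota(2^\IN)$ is a computable compact (hence closed) subset of $X$, and $\iota$ induces a computable map on closed sets $\AA_-(2^\IN)\to\AA_-(X)$, $A\mapsto\iota(A)$: a closed subset $A$ of the compact space $2^\IN$ maps to a compact, hence closed, subset $\iota(A)$ of $X$, and negative information (a list of rational balls covering the complement within $2^\IN$) can be converted computably into negative information on $\iota(A)$ in $X$ using $\iota$ and its inverse. Given the input sequence $(A_i)_i$ of nowhere dense closed sets in $2^\IN$, I would form in $X$ the sequence
\[
B_0:=X\setminus\iota(2^\IN),\qquad B_{i+1}:=\iota(A_i)\quad(i\in\IN),
\]
where $B_0$ is the complement of the embedded Cantor space. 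The point of prepending $B_0$ is to force any uncovered point of $X$ to lie inside $\iota(2^\IN)$: if $x\notin\bigcup_i B_i$, then in particular $x\notin B_0$, so $x\in\iota(2^\IN)$ and $\iota^{-1}(x)$ is defined.

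\textbf{Checking the nowhere-density hypotheses.} The main point requiring care, and the step I expect to be the real obstacle, is verifying that all the $B_i$ are nowhere dense closed sets in $X$, so that $(B_i)_i\in\dom(\BCT_{0,X})$. For $B_{i+1}=\iota(A_i)$ this is where perfectness of $X$ enters: $\iota(A_i)$ is nowhere dense in the subspace $\iota(2^\IN)$ since $\iota$ is a homeomorphism onto its image and $A_i$ is nowhere dense in $2^\IN$, but I must argue it is nowhere dense in all of $X$. This follows because $\iota(2^\IN)$ itself is nowhere dense in a perfect Polish space $X$ whenever $X\neq\iota(2^\IN)$ (a compact copy of Cantor space cannot have interior in a space with no isolated points unless it is clopen and the whole space), so any subset of it is nowhere dense; one should phrase the argument so that it also handles $B_0=X\setminus\iota(2^\IN)$, whose closure misses the interior of $K$. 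The cleanest route is to observe that $\overline{B_0}\cup K=X$ but $K$ has empty interior, hence $B_0$ is nowhere dense as well; alternatively one verifies directly that $\iota(2^\IN)$ has empty interior using perfectness. Since both families consist of nowhere dense closed sets, the reindexed sequence $(B_i)_i$ is a legitimate input to $\BCT_{0,X}$.

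\textbf{Pulling back the output.} Now apply $\BCT_{0,X}$ to $(B_i)_i$ to obtain a point $x\in X\setminus\bigcup_i B_i$. As noted, $x\notin B_0$ forces $x\in\iota(2^\IN)$, so I compute $y:=\iota^{-1}(x)\in2^\IN$, which is computable since $\iota^{-1}$ is computable. It remains to check $y\in\BCT_{0,2^\IN}(A_i)$, i.e.\ $y\notin A_i$ for all $i$. Indeed $x\notin B_{i+1}=\iota(A_i)$ means $x\notin\iota(A_i)$, and since $\iota$ is injective with $x=\iota(y)$ this gives $y\notin A_i$. Thus $y\in2^\IN\setminus\bigcup_i A_i=\BCT_{0,2^\IN}(A_i)$. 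The reduction uses a computable input transformation (forming $(B_i)_i$ from $(A_i)_i$) and a computable output transformation (applying $\iota^{-1}$ to the returned point), and the output map depends only on the value returned by $\BCT_{0,X}$, so the reduction is strong: $\BCT_{0,2^\IN}\leqSW\BCT_{0,X}$.
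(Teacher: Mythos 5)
Your overall plan (push the sets forward through $\iota$, apply $\BCT_{0,X}$, pull the point back with $\iota^{-1}$) is the same as the paper's, and your worry that the point returned by $\BCT_{0,X}$ need not lie in $\iota(2^\IN)$ --- so that $\iota^{-1}$ cannot be applied to it --- is a genuine subtlety which the paper's own proof passes over silently. But your repair cannot work, and in fact \emph{no} repair of this shape can. The set $B_0:=X\setminus\iota(2^\IN)$ is \emph{open} (the image $\iota(2^\IN)$ is compact, hence closed), and a non-empty open set is never nowhere dense; nor is its closure: in exactly the situation you envisage, where $\iota(2^\IN)$ has empty interior (e.g.\ the Cantor set inside $[0,1]$), the set $B_0$ is dense, so $\overline{B_0}=X$. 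Your inference ``$\overline{B_0}\cup K=X$ and $K^\circ=\emptyset$, hence $B_0$ is nowhere dense'' runs exactly backwards: $X\setminus\overline{B_0}=K^\circ=\emptyset$, so $\overline{B_0}=X$, the opposite of nowhere dense. More fundamentally, the Baire Category Theorem itself (Theorem~\ref{thm:BCT}) forbids covering the non-empty open set $X\setminus\iota(2^\IN)$ by countably many nowhere dense closed sets, so no choice of auxiliary input sets whatsoever can force the output of $\BCT_{0,X}$ into $\iota(2^\IN)$; with your $B_0$ the input simply leaves $\dom(\BCT_{0,X})$, and then $\BCT_{0,X}$ guarantees nothing. (The complement-covering trick you are importing is the one used in Theorem~\ref{thm:Baire-Cantor}, but it is legitimate there only because the embedding of $\IN^\IN$ into $2^\IN$ has c.e.\ comeager range by Lemma~\ref{lem:Baire-Cantor}; a compact copy of $2^\IN$ with empty interior never has comeager range.)

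There are two further problems in your nowhere-density step. First, the proposition assumes $X$ is \emph{rich}, not perfect, and richness does not imply perfectness (e.g.\ $2^\IN\sqcup\IN$ is rich), so perfectness of $X$ may not be invoked at all. Second, your topological claim is false even for perfect $X$: in $X=2^\IN$ the map $p\mapsto 0p$ is a computable embedding whose image $02^\IN$ is clopen and proper, so a compact copy of Cantor space in a perfect Polish space can certainly be somewhere dense. The paper proves exactly what is needed, directly and with no assumption on how $\iota(2^\IN)$ sits inside $X$: if $U\In\iota(A)$ is open and non-empty, then $\iota^{-1}(U)\In A$ is open and non-empty by continuity and injectivity of $\iota$, so nowhere density of $A$ in $2^\IN$ already forces nowhere density of $\iota(A)$ in $X$. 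Beyond that, the paper only needs the computability of $A\mapsto\iota(A)$ on negative information (via \cite[Theorem~3.7]{BG09}, using that $\iota(2^\IN)$ is computably compact) before applying $\iota^{-1}$ to the returned point. So the one place where your proposal genuinely departs from the paper --- the attempt to force the output point into the embedded copy --- is precisely the place where it breaks down irreparably.
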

\begin{proof}
Let $\iota:2^\IN\into X$ be a computable embedding. 
Then $\iota$ preserves nowhere density: if $A\In 2^\IN$ is such that $\iota(A)$ is somewhere dense,
then there exists some non-empty open $U\In X$ with $U\In\iota(A)$ and since $\iota$ is injective and continuous 
we obtain that $\iota^{-1}(U)\In A$ is non-empty and open. Hence, $A$ is somewhere dense.
Finally, the map $J:\AA_-(2^\IN)\to\AA_-(X),A\mapsto\iota(A)$ is computable by \cite[Theorem~3.7]{BG09},
since $\iota(2^\IN)$ is computably compact and hence, in particular, co-c.e.\ closed. 
Now given a sequence $(A_i)_i$ of nowhere dense closed sets, we can compute $(J(A_i))_i$
and if $x\in\BCT_{0,X}(J(A_i)_i)=\bigcup_{i=0}^\infty\left(X\setminus \iota(A_i)\right)$,
then we obtain
\[\iota^{-1}(x)\in\iota^{-1}\left(\bigcup_{i=0}^\infty(X\setminus\iota(A_i))\right)=\bigcup_{i=0}^\infty(2^\IN\setminus A_i)=\BCT_{0,2^\IN}(A_i)_i.\]
Since $\iota^{-1}$ is computable, we obtain $\BCT_{0,2^\IN}\leqSW\BCT_{0,X}$.
\end{proof}

In particular, this applies to perfect computable Polish spaces $X$ by Fact~\ref{fact:perfect-rich}.
Finally, we relate the Baire Category Theorem $\BCT_0$ for Baire and Cantor space by the following result. 
We use the notion of a c.e.\ comeager set as defined later in Definition~\ref{def:comeager}.

\begin{lemma}[Embedding of Baire space into Cantor space]
\label{lem:Baire-Cantor}
The map
\[\iota:\IN^\IN\to2^\IN,p\mapsto 1^{p(0)}01^{p(1)}01^{p(2)}... \]
is a computable embedding with a c.e.\ comeager $\range(\iota)$ and the map
\[I:\In\AA_-(\IN^\IN)\mto\AA_-(2^\IN),I(A):=\{B:\iota(A)\In B\mbox{ and }B\mbox{ is nowhere dense}\}\] 
is computable, restricted to $\dom(I):=\{A:A$ nowhere dense$\}$.
\end{lemma}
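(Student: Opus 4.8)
The plan is to verify the three assertions of Lemma~\ref{lem:Baire-Cantor} in turn: that $\iota$ is a computable embedding, that $\range(\iota)$ is c.e.\ comeager in $2^\IN$, and that the multi-valued map $I$ is computable on the nowhere-dense closed sets. First I would check that $\iota$ is computable and injective: the map sends $p$ to the sequence of $p(0)$ ones, a zero, $p(1)$ ones, a zero, and so on, so each bit of $\iota(p)$ is determined by a finite prefix of $p$, giving computability; injectivity follows since the positions of the zeros in $\iota(p)$ recover each $p(n)$ as the gap between consecutive zeros. For the partial inverse, given a sequence in $\range(\iota)$ we read off successive maximal blocks of ones delimited by zeros, which is computable on the domain. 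Thus $\iota$ is a computable embedding.

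Next I would identify $\range(\iota)$ inside $2^\IN$. A sequence $q\in2^\IN$ lies in $\range(\iota)$ exactly when it contains infinitely many zeros (each finite configuration of ones between zeros then decodes to some $p(n)$), and the complement $2^\IN\setminus\range(\iota)$ consists of those $q$ that eventually are constantly $1$, i.e.\ $q\in U_n:=\{q: q(k)=1 \text{ for all } k\geq n\}$ for some $n$. Each such set is nowhere dense and closed, so the complement is a countable union of nowhere dense closed sets, making $\range(\iota)$ comeager. For the c.e.\ part I would exhibit $\range(\iota)$ as containing a c.e.\ sequence of basic open sets whose union is dense and lies inside it; concretely, the co-c.e.\ closed complement is enumerable from the sets $U_n$, which matches Definition~\ref{def:comeager} of a c.e.\ comeager set (the precise fit to that definition is the one point I would take care to state cleanly, since it depends on the formal notion introduced later).

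For the computability of $I$, the key observation is that we are not asked to compute $\iota(A)$ itself as a negative-information closed set in $2^\IN$ — which is the genuinely hard direction — but only to produce \emph{some} nowhere-dense closed $B\supseteq\iota(A)$ in $\AA_-(2^\IN)$. The natural candidate is $B:=\overline{\iota(A)}\cup(2^\IN\setminus\range(\iota))$, or more robustly the closure of $\iota(A)$ together with the nowhere-dense remainder; the point is that adding the nowhere-dense complement of $\range(\iota)$ gives slack that makes a negative-information output computable. I would argue that, given $A$ nowhere dense in negative information, one can compute in negative information a closed set $B$ that contains $\iota(A)$ and whose interior is empty: since $A$ is nowhere dense in $\IN^\IN$ and $\iota$ preserves nowhere density (an open ball of $2^\IN$ meeting $\range(\iota)$ pulls back under $\iota^{-1}$ to an open set, as in the proof of Proposition~\ref{prop:Cantor-rich-Polish}), the closure of $\iota(A)$ is nowhere dense in $\range(\iota)$, and the only new interior points could come from $2^\IN\setminus\range(\iota)$, which itself has empty interior.

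The main obstacle will be the third part: producing the negative-information name of $B$ computably. The difficulty is that translating positive-type information about $\iota(A)$ into negative information is in general only limit-computable (Fact~\ref{fact:+-}), so I must exploit that $A$ is \emph{given} in negative information and that I have freedom to choose $B$ large. Concretely, I would enumerate rational balls of $2^\IN$ to be deleted from $B$: a ball $V$ may be deleted once I see that $\iota^{-1}(V)$ (a basic open set of $\IN^\IN$ meeting $\range(\iota)$) is enumerated into the complement of $A$, \emph{and} that $V$ meets $\range(\iota)$; balls disjoint from $\range(\iota)$ I simply never delete, which is harmless since the undeleted part outside $\range(\iota)$ is nowhere dense. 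Verifying that this enumeration yields a closed set $B$ with $\iota(A)\subseteq B$, $B$ nowhere dense, and that every such enumeration step is effective from the negative name of $A$ and the computable structure of $\iota$ is the crux, and I would spend the bulk of the proof checking that the deleted balls are exactly enough to keep $B$ nowhere dense while never deleting a point of $\iota(A)$.
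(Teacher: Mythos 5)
Your first two parts (computable embedding, c.e.\ comeager range) are correct and essentially identical to the paper's proof. The gap is in the third part, and it is genuine: you repeatedly assert that $2^\IN\setminus\range(\iota)$ is nowhere dense, but it is not --- it is the set of eventually-constant-$1$ sequences, which is \emph{dense} in $2^\IN$ (every cylinder $w2^\IN$ contains $w111\cdots$). It is meager, as your own second paragraph correctly states (a countable union of the nowhere dense closed sets $U_n$), and it has empty interior, but neither of these is the same as being nowhere dense. Consequently your ``natural candidate'' $B:=\overline{\iota(A)}\cup(2^\IN\setminus\range(\iota))$ is unusable: it is not closed, its closure is all of $2^\IN$, and hence no nowhere dense closed set contains it. The same conflation makes your clause ``balls disjoint from $\range(\iota)$ I simply never delete'' vacuous (every ball meets the dense set $\range(\iota)$) and voids the justification that the undeleted part outside $\range(\iota)$ is harmless ``because nowhere dense''. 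A second concrete error: for a general cylinder $V=w2^\IN$ the preimage $\iota^{-1}(V)$ is \emph{not} a basic open set of $\IN^\IN$; unless $w$ ends in $0$ it is an infinite union of balls (e.g.\ $\iota^{-1}(1^k2^\IN)=\{p:p(0)\geq k\}$), and since balls in $\IN^\IN$ are not compact, your deletion trigger ``$\iota^{-1}(V)$ has been enumerated into the complement of $A$'' need never be finitely witnessed for such $V$, so those balls are never deleted.

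Both problems are repaired exactly by the step you defer as ``the crux'', and this is where the paper's real work lies. Restrict deletions to balls of the form $J(v)2^\IN$, where $J(a_0\cdots a_n):=1^{a_0}01^{a_1}0\cdots1^{a_n}0$; for these, $\iota^{-1}(J(v)2^\IN)=v\IN^\IN$ is a single ball, and one may simply delete $J(v_j)2^\IN$ for every ball $v_j\IN^\IN$ enumerated into the complement of $A$. Then $\iota(A)\In B$ is immediate, and nowhere density of the output follows from monotonicity of $J$ together with nowhere density of $A$: every $w\in\{0,1\}^*$ extends to some $J(v)$; since $v\IN^\IN\not\In A$ there is an enumerated $v_j$ comparable with $v$ in the prefix order, and then $J(v'')2^\IN$, with $v''$ the longer of $v,v_j$, is a deleted ball inside $w2^\IN$. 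This monotone word function $J$ is precisely the device in the paper's proof; the paper packages the argument slightly differently (it proves that $\overline{\iota(A)}$ is nowhere dense whenever $A$ is, obtains a co-c.e.\ closed $B$ with $B\cap\range(\iota)=\iota(A)$ directly from computability of the partial inverse $\iota^{-1}$, and uses density of $\range(\iota)$ to conclude $B^\circ=(\overline{\iota(A)})^\circ$), but the combinatorial content is the same. So your high-level strategy --- output some larger nowhere dense closed superset by deleting balls as the negative name of $A$ unfolds --- is the right one and matches the paper in spirit; as written, however, the proposal rests on a false topological claim and omits exactly the argument that makes the construction work.
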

\begin{proof}
(1) It is clear that $\iota$ and its partial inverse are computable. 
(2) The sequence $(U_n)_n$ with $U_n:=\{q\in2^\IN:(\exists k\geq n)\,q(k)=0\}$ is a computable
sequence of dense c.e.\ open subsets $U_n\In2^\IN$ and $\range(\iota)=\bigcap_{n=0}^\infty U_n$.
Hence $(2^\IN\setminus U_n)_n$ is a computable sequence in $\AA_-(2^\IN)$ and $\range(\iota)$ is c.e.\ comeager.
(3) We prove that $\overline{\iota(A)}\In2^\IN$ is nowhere dense for all closed and nowhere dense $A\In\IN^\IN$.
We define a word function $J:\IN^*\to\{0,1\}^*$ by $J(a_0...a_n):=1^{a_0}01^{a_1}0...1^{a_n}0$ for all $a_0,...,a_n\in\IN$.
Since $J$ is monotone, we obtain for all $v\in\IN^*$ that $J(v)2^\IN\cap\iota(A)=\emptyset$ if $v\IN^\IN\cap A=\emptyset$.
Let now $w\IN^\IN\not\In A$ for $w\in\IN^*$. Then there is some $v\in\IN^*$ with $w\prefix v$ and $v\IN^\IN\cap A=\emptyset$
and hence $J(v)2^\IN\cap\overline{\iota(A)}=\emptyset$, which implies $J(w)2^\IN\not\In\overline{\iota(A)}$.
In other words, if $\overline{\iota(A)}$ is somewhere dense, then $A$ is so.
(4) The fact that the partial inverse $\iota^{-1}:\In2^\IN\to\IN^\IN$ is computable implies that for every closed $A\in\AA_-(\IN^\IN)$
we can compute some closed $B\in\AA_-(2^\IN)$ such that $B\cap\range(\iota)=\iota(A)$. 
Since $\range(\iota)$ is dense, $B^\circ=(\overline{B\cap\range(\iota)})^\circ=(\overline{\iota(A)})^\circ$.
Hence, $B$ is nowhere dense if $\overline{\iota(A)}$ is so. 
Altogether, this shows that $I$ is computable. 
\end{proof}

We can now prove the following result.

\begin{theorem}[Cantor and Baire]
\label{thm:Baire-Cantor}
$\BCT_{0,2^\IN}\equivSW\BCT_{0,\IN^\IN}$ and $\BCT_{2,2^\IN}\equivSW\BCT_{2,\IN^\IN}$.
\end{theorem}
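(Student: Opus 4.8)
The plan is to establish the $\BCT_0$ equivalence first and then deduce the $\BCT_2$ equivalence by taking jumps. For $\BCT_0$ the forward direction $\BCT_{0,2^\IN}\leqSW\BCT_{0,\IN^\IN}$ is immediate from Proposition~\ref{prop:Polish-Baire} applied to $X=2^\IN$ (or, alternatively, from Proposition~\ref{prop:Cantor-rich-Polish} applied to the rich space $X=\IN^\IN$). So the work lies entirely in the reverse reduction $\BCT_{0,\IN^\IN}\leqSW\BCT_{0,2^\IN}$, for which I would use the embedding $\iota$ and the map $I$ from Lemma~\ref{lem:Baire-Cantor}.

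Given a sequence $(A_i)_i$ of nowhere dense closed sets in $\IN^\IN$, I would first compute, using the computability of $I$, a sequence $(B_i)_i$ of nowhere dense closed sets in $2^\IN$ with $\iota(A_i)\In B_i$ for each $i$. The naive idea is to apply $\BCT_{0,2^\IN}$ to $(B_i)_i$, obtain some $q\in2^\IN\setminus\bigcup_i B_i$, and return $\iota^{-1}(q)$. The obstacle is that the returned point $q$ need not lie in $\range(\iota)$, so that $\iota^{-1}(q)$ may be undefined.

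The key step that resolves this is to enlarge the input sequence so as to force the output into the comeager set $\range(\iota)$. Recall from the proof of Lemma~\ref{lem:Baire-Cantor} that $\range(\iota)=\bigcap_{n=0}^\infty U_n$ for a computable sequence of dense c.e.\ open sets $U_n$, so that each $C_n:=2^\IN\setminus U_n$ is a nowhere dense closed set computable in $\AA_-(2^\IN)$. I would feed the concatenation of all the $B_i$ together with all the $C_n$ into $\BCT_{0,2^\IN}$. Any returned point $q$ then satisfies $q\notin C_n$ for all $n$, hence $q\in\bigcap_n U_n=\range(\iota)$, and $q\notin B_i\supseteq\iota(A_i)$ for all $i$. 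Since $\iota$ is injective, $p:=\iota^{-1}(q)$ is well defined, computable from $q$, and satisfies $p\notin A_i$ for all $i$, i.e.\ $p\in\BCT_{0,\IN^\IN}(A_i)_i$. As the input transformation is computable and the final application of $\iota^{-1}$ depends only on the output $q$, this is a strong reduction, yielding $\BCT_{0,\IN^\IN}\leqSW\BCT_{0,2^\IN}$ and hence $\BCT_{0,2^\IN}\equivSW\BCT_{0,\IN^\IN}$.

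Finally, for the $\BCT_2$ statement I would invoke Theorem~\ref{thm:jumps}, which gives $\BCT_{2,2^\IN}\equivSW\BCT_{0,2^\IN}'$ and $\BCT_{2,\IN^\IN}\equivSW\BCT_{0,\IN^\IN}'$, since both $2^\IN$ and $\IN^\IN$ are computable perfect Polish spaces. Because the jump operation is monotone with respect to $\leqSW$, the equivalence $\BCT_{0,2^\IN}\equivSW\BCT_{0,\IN^\IN}$ just established lifts to $\BCT_{0,2^\IN}'\equivSW\BCT_{0,\IN^\IN}'$, and chaining these equivalences yields $\BCT_{2,2^\IN}\equivSW\BCT_{2,\IN^\IN}$. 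The main obstacle throughout is the single point above, namely ensuring the Cantor-space witness lands in $\range(\iota)$, which the padding trick with the nowhere dense sets $C_n$ handles cleanly.
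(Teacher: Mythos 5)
Your proposal is correct and follows essentially the same route as the paper's proof: the forward direction via Proposition~\ref{prop:Cantor-rich-Polish} (or \ref{prop:Polish-Baire}), the reverse direction via Lemma~\ref{lem:Baire-Cantor} with the padding by nowhere dense closed sets covering $2^\IN\setminus\range(\iota)$ to force the output into $\range(\iota)$, and the $\BCT_2$ statement by Theorem~\ref{thm:jumps} together with monotonicity of the jump under $\leqSW$. The only cosmetic difference is that the paper feeds the sets $(B_i\cup C_i)_i$ rather than interleaving the two sequences, which is immaterial.
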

\begin{proof}
We only need to prove that $\BCT_{0,\IN^\IN}\leqSW\BCT_{0,2^\IN}$, since the second statement follows from Theorem~\ref{thm:jumps} and $\BCT_{0,2^\IN}\leqSW\BCT_{0,\IN^\IN}$ follows from Proposition~\ref{prop:Cantor-rich-Polish}.
To this end, let $(A_i)_i$ be a sequence of nowhere dense closed sets in $\AA_-(\IN^\IN)$. Then we can compute by Lemma~\ref{lem:Baire-Cantor}
a sequence $(B_i)_i$ of nowhere dense closed sets in $\AA_-(2^\IN)$ such that $\iota(A_i)\In B_i$. 
Moreover, we can compute a sequence $(C_i)_i$ of nowhere dense closed sets in $\AA_-(2^\IN)$ such that
$2^\IN\setminus\range(\iota)=\bigcup_{i=0}^\infty C_i$. If
\[p\in\BCT_{0,2^\IN}(B_i\cup C_i)_i=2^\IN\setminus\bigcup_{i=0}^\infty(B_i\cup C_i)\In\range(\iota)\setminus\iota\left(\bigcup_{i=0}^\infty A_i\right),\]
then $\iota^{-1}(p)\in\IN^\IN\setminus\bigcup_{i=0}^\infty A_i=\BCT_{0,\IN^\IN}(A_i)_i$.
Hence $\BCT_{0,\IN^\IN}\leqSW\BCT_{0,2^\IN}$.
\end{proof}

We can summarize the other results of this section in the following corollary.

\begin{corollary}[Perfect Polish spaces]
\label{cor:perfect-polish}
$\BCT_{i,X}\equivSW\BCT_{i,\IN^\IN}$ for each computable perfect Polish space $X$ and $i\in\{0,1,2,3\}$.
\end{corollary}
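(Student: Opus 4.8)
The plan is to deduce the corollary by combining the various reductions established earlier in the section with the jump equivalences from Theorem~\ref{thm:jumps}, treating the four values of $i$ separately but uniformly. The goal statement asserts $\BCT_{i,X}\equivSW\BCT_{i,\IN^\IN}$ for every computable perfect Polish space $X$ and $i\in\{0,1,2,3\}$, so the task is to collect the already-proved ingredients into a single chain of strong equivalences.

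First I would handle $i=0$, which is the core case. The reduction chain advertised at the start of the section reads $\BCT_{0,X}\leqSW\BCT_{0,\IN^\IN}\leqSW\BCT_{0,2^\IN}\leqSW\BCT_{0,X}$. The first link is exactly Proposition~\ref{prop:Polish-Baire}, valid for all computable Polish spaces. The middle link is the content of Theorem~\ref{thm:Baire-Cantor}, where $\BCT_{0,\IN^\IN}\leqSW\BCT_{0,2^\IN}$ was established. The final link is Proposition~\ref{prop:Cantor-rich-Polish}, which gives $\BCT_{0,2^\IN}\leqSW\BCT_{0,X}$ for every rich computable Polish space, and by Fact~\ref{fact:perfect-rich} every perfect computable Polish space is rich. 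Chaining these three strong reductions through the transitivity of $\leqSW$ yields $\BCT_{0,X}\equivSW\BCT_{0,\IN^\IN}$, establishing the case $i=0$.

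Next I would dispatch $i=2$ by invoking the jump equivalence $\BCT_0'\equivSW\BCT_2$ from Theorem~\ref{thm:jumps}, which holds for every computable perfect Polish space. Since the jump operation $f\mapsto f'$ is monotone with respect to $\leqSW$ (as recalled in the Jumps subsection of the preliminaries), the case-$0$ equivalence $\BCT_{0,X}\equivSW\BCT_{0,\IN^\IN}$ transfers directly to the jumps, giving $\BCT_{0,X}'\equivSW\BCT_{0,\IN^\IN}'$ and therefore $\BCT_{2,X}\equivSW\BCT_{2,\IN^\IN}$. For $i=1$ and $i=3$ the argument is even shorter: Fact~\ref{fact:BCT0-BCT1} gives $\BCT_{1,X}\equivSW\C_\IN$ and Theorem~\ref{thm:jumps} gives $\BCT_{3,X}\equivSW\CL_\IN$, and neither $\C_\IN$ nor $\CL_\IN$ depends on the underlying space $X$; hence both degrees are automatically the same for $X$ and for $\IN^\IN$, which is itself a computable perfect Polish space.

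I do not expect any genuine obstacle here, since the corollary is purely a bookkeeping assembly of results proved earlier in the excerpt; the only point requiring a moment of care is the transfer to $i=2$, where one must explicitly cite the monotonicity of jumps under $\leqSW$ to move from the case-$0$ equivalence to the case-$2$ equivalence rather than reproving it from scratch.
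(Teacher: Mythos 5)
Your proof is correct and follows essentially the same route as the paper: the chain $\BCT_{0,X}\leqSW\BCT_{0,\IN^\IN}\leqSW\BCT_{0,2^\IN}\leqSW\BCT_{0,X}$ via Propositions~\ref{prop:Polish-Baire} and \ref{prop:Cantor-rich-Polish}, Fact~\ref{fact:perfect-rich} and Theorem~\ref{thm:Baire-Cantor}, with $\BCT_1$ settled by Fact~\ref{fact:BCT0-BCT1} and $\BCT_2,\BCT_3$ obtained from Theorem~\ref{thm:jumps}. Your explicit appeal to the monotonicity of the jump under $\leqSW$ for the case $i=2$ is a detail the paper leaves implicit, but it is the same argument.
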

\begin{proof}
By Fact~\ref{fact:BCT0-BCT1} the claim is already known for $\BCT_1$.
If we can prove the claim for $\BCT_0$, then the claim follows for $\BCT_2$ and $\BCT_3$
by Theorem~\ref{thm:jumps}. In order to prove the claim for $\BCT_0$ it suffices to prove
\[\BCT_{0,X}\leqSW\BCT_{0,\IN^\IN}\leqSW\BWT_{0,2^\IN}\leqSW\BCT_{0,X},\]
which follows from Propositions~\ref{prop:Polish-Baire}, \ref{prop:Cantor-rich-Polish}, Fact~\ref{fact:perfect-rich} and Theorem~\ref{thm:Baire-Cantor}.
\end{proof}

We mention that many typical spaces, such as $2^\IN,\IN^\IN,\IR,[0,1]^\IN,\ll{2}$, and $\CC[0,1]$,  are computable perfect Polish spaces with their usual metrics.

\section{Dense Versions of the Baire Category Theorem}
\label{sec:dense}

In view of the strong version of the Baire Category Theorem that is formulated in Theorem~\ref{thm:BCT-strong},
it is also natural to consider versions of $\BCT_0$ and $\BCT_2$ where the output is not just a single
point, but an entire sequence that is dense in the complement of the given union of closed sets.
We define such versions more precisely now. 

\begin{definition}[Dense Baire Category Theorem]
Let $X$ be a computable Polish space. We introduce the operations
$\DBCT_{0,X}:\In\AA_-(X)^\IN\mto X^\IN$ and $\DBCT_{2,X}:\In\AA_+(X)^\IN\mto X^\IN$ with
        \begin{itemize}
        \item $\DBCT_{0,X}(A_i):=\DBCT_{2,X}(A_i):=\{(x_i)_i:(x_i)_i$ is dense in $X\setminus\bigcup_{i=0}^\infty A_i\}$,
        \item $\dom(\DBCT_{0,X}):=\dom(\DBCT_{2,X}):=\{(A_i):(\forall i)\;A_i^\circ=\emptyset\}$.
        \end{itemize}
\end{definition}

Even though prima facie $\DBCT_0$ and $\DBCT_2$ might appear to be stronger than 
$\BCT_0$ and $\BCT_2$, respectively, this is not actually the case, as we will show now at least for perfect spaces. 
Firstly, for every computable metric space $(X,d,\alpha)$ we use the abbreviation
$B_{n,k}:=\overline{B(\alpha(n),2^{-k})}$ (which denotes the closure of the given open ball,
not the corresponding closed ball). We note that these balls induce computable Polish
spaces in a uniform way, provided $(X,d,\alpha)$ is a computable Polish space. 

\begin{lemma}[Closure of balls]
\label{lem:closure-balls}
Let $(X,d,\alpha)$ be a computable Polish space. Then $(B_{n,k},d|_{B_{n,k}},\alpha_{n,k})$ is a computable 
Polish space for all $n,k\in\IN$, where $(\alpha_{n,k})_{\langle n,k\rangle}$ is a computable sequence of 
maps $\alpha_{n,k}:\IN\to X$ such that $\range(\alpha_{n,k})$ is dense in $B_{n,k}$.
Moreover, the sequence $(\iota_{n,k})_{\langle n,k\rangle}$ of embeddings $\iota_{n,k}:B_{n,k}\into X$ 
is computable too. 
\end{lemma}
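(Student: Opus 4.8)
The plan is to construct, for each pair $\langle n,k\rangle$, a dense sequence in the closed ball $B_{n,k}=\overline{B(\alpha(n),2^{-k})}$ by filtering the original dense sequence $\alpha$ of $X$, and to verify that the resulting structure is a computable Polish space with a computable embedding back into $X$. First I would observe that $B_{n,k}$, as a closed subset of the complete space $X$, is itself complete in the restricted metric $d|_{B_{n,k}}$, so the only substantive issue is to produce a \emph{computable} dense sequence in it and to check that the induced metric is computable. The inherited metric is trivially computable since $d\circ(\alpha\times\alpha)$ is computable on $X$ and we are merely restricting it; the embedding $\iota_{n,k}\colon B_{n,k}\into X$ is the inclusion map, which is computable with computable partial inverse because membership in $B_{n,k}$ is decidable up to the usual metric approximations.

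The heart of the argument is the construction of $\alpha_{n,k}$. The naive attempt—list those $\alpha(m)$ with $d(\alpha(m),\alpha(n))\le 2^{-k}$—fails because $d(\alpha(m),\alpha(n))=2^{-k}$ is not decidable and because a point on the boundary sphere may be approximated only from outside. Instead I would enumerate pairs $(m,j)$ and output $\alpha(m)$ whenever we can verify $d(\alpha(m),\alpha(n))<2^{-k}-2^{-j}$, i.e.\ when $\alpha(m)$ is safely \emph{inside} the open ball; these points are clearly in $B_{n,k}$. Density of this collection in $B_{n,k}$ requires an argument: every point of $B_{n,k}=\overline{B(\alpha(n),2^{-k})}$ is a limit of points of the open ball $B(\alpha(n),2^{-k})$, and each such interior point $x$ with $d(x,\alpha(n))<2^{-k}$ has a positive gap to the boundary, so for small enough $j$ the strict inequality $d(\alpha(m),\alpha(n))<2^{-k}-2^{-j}$ will be met by some $\alpha(m)$ close to $x$ (using density of $\alpha$ in $X$). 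This shows $\range(\alpha_{n,k})$ is dense in the open ball, hence in its closure $B_{n,k}$. I would package this so that the double-indexed family $(\alpha_{n,k})_{\langle n,k\rangle}$ is produced by a single algorithm, uniformly in $\langle n,k\rangle$, which is what the lemma demands.

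The main obstacle will be the boundary behaviour: one must take genuine care that $B_{n,k}$ is the \emph{closure of the open ball}, not the metric closed ball $\{x:d(x,\alpha(n))\le 2^{-k}\}$, which can be strictly larger in general metric spaces. The construction above only ever produces interior points, so it yields a dense sequence in $\overline{B(\alpha(n),2^{-k})}$ exactly, sidestepping the discrepancy. A subtlety worth flagging in the write-up is that the empty case is harmless: if the open ball is empty the sequence is empty and there is nothing to make dense, though in a computable metric space $B(\alpha(n),2^{-k})$ always contains $\alpha(n)$ and so is nonempty. Finally I would note that completeness transfers because closed subspaces of complete spaces are complete, so $(B_{n,k},d|_{B_{n,k}},\alpha_{n,k})$ satisfies all the axioms of a computable Polish space, uniformly in $n,k$, and that the inclusion maps $\iota_{n,k}$ assemble into a computable sequence of embeddings, completing the lemma.
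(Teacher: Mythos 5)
Your proposal is correct and follows essentially the same route as the paper: both enumerate points $\alpha(m)$ whose strict inequality $d(\alpha(m),\alpha(n))<2^{-k}$ can be semi-decidably verified (the paper via a time-bounded dovetailed search defaulting to $\alpha(n)$, you via rational margins $2^{-k}-2^{-j}$), observe that these interior points are dense in the closure of the \emph{open} ball, and derive uniformity and computability of the inclusions from the construction. Your added remarks on the partial inverse of $\iota_{n,k}$ go beyond what the paper proves (and the justification should really be density plus semi-decidability of strict inequalities, not ``decidable membership''), but they are not needed for the lemma.
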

\begin{proof}
We just choose $\alpha_{n,k}(0):=\alpha(n)$ and then we continue inductively.
We let $\alpha_{n,k}(t+1)=\alpha(m)$ if within $t$ time steps and in some systematic way 
we can find a fresh value $m$ that has not been used before to define any of the points 
$\alpha_{n,k}(s)$ with $s\leq t$ and such that
$d(\alpha(n),\alpha(m))<2^{-k}$.
Otherwise, if we can find no such $m$, then we let $\alpha_{n,k}(t+1)=\alpha(n)$. 
In this way we obtain a computable sequence $(\alpha_{n,k})_{\langle n,k\rangle}$ with the desired properties.
We note that the algorithm guarantees that there exists a computable function $f:\IN\to\IN$ such that
$\alpha_{n,k}(m)=\alpha f\langle n,k,m\rangle$ for all $n,k,m\in\IN$. Hence, it follows
that the sequence $(\iota_{n,k})_{\langle n,k\rangle}$ of embeddings is computable.
\end{proof}

By a uniform version of Lemma~\ref{lem:Cauchy} and using the fact that $\BCT_{0,\IN^\IN}$ is 
parallelizable, we can now obtain the following conclusion.

\begin{proposition}[Dense Baire Category Theorem]
\label{prop:DBCT}
$\DBCT_{0,X}\leqSW\BCT_{0,\IN^\IN}$ for every computable Polish space $X$.
\end{proposition}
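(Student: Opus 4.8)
The plan is to obtain density by solving, for each basic closed ball, an independent instance of $\BCT_0$ over Baire space, and then to merge these countably many instances into a single oracle call using the strong parallelizability of $\BCT_{0,\IN^\IN}$ established in Proposition~\ref{prop:BCT02-parallelizability}. Recall that a sequence is dense in $Y:=X\setminus\bigcup_{i=0}^\infty A_i$ as soon as it meets every ball $B(\alpha(n),2^{-k})$ having nonempty intersection with $Y$; since the diameter of $B_{n,k}=\overline{B(\alpha(n),2^{-k})}$ tends to $0$ as $k\to\infty$, it therefore suffices to produce, for each pair $\langle n,k\rangle$, a single point of $B_{n,k}\setminus\bigcup_{i=0}^\infty A_i$. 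By Lemma~\ref{lem:closure-balls} the spaces $B_{n,k}$ are computable Polish spaces uniformly in $\langle n,k\rangle$, and they are nonempty as they contain $\alpha(n)$, so the classical Baire Category Theorem guarantees that such a point always exists.

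The one point that needs checking is that restricting to a ball preserves nowhere density. First I would verify that if $A\In X$ is closed and nowhere dense, then $A\cap B_{n,k}$ is nowhere dense in the subspace $B_{n,k}$: a nonempty relatively open $U=V\cap B_{n,k}\In A\cap B_{n,k}$ (with $V$ open in $X$) would force $V$ to meet the open ball $B(\alpha(n),2^{-k})$, since every point of $B_{n,k}$ lies in the closure of that ball; but then $V\cap B(\alpha(n),2^{-k})$ is a nonempty open subset of $X$ contained in $A$, contradicting nowhere density. Hence each $(A_i\cap B_{n,k})_i$ is a sequence of nowhere dense closed sets in $B_{n,k}$, and the map $A\mapsto A\cap B_{n,k}$ from $\AA_-(X)$ to $\AA_-(B_{n,k})$ is computable uniformly in $\langle n,k\rangle$, being the pullback of negative information along the computable embedding $\iota_{n,k}$.

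Next I would apply the uniform version of Lemma~\ref{lem:Cauchy} to the family $(B_{n,k})_{\langle n,k\rangle}$, obtaining computable total surjections $\delta_{n,k}:\IN^\IN\to B_{n,k}$ whose preimages preserve nowhere density, uniformly in $\langle n,k\rangle$. Pulling the sets back along $\delta_{n,k}$ turns $(A_i\cap B_{n,k})_i$ into a sequence of nowhere dense closed sets in $\AA_-(\IN^\IN)$, exactly as in the proof of Proposition~\ref{prop:Polish-Baire}; feeding it to $\BCT_{0,\IN^\IN}$ returns some $p_{n,k}\in\IN^\IN\setminus\bigcup_{i=0}^\infty\delta_{n,k}^{-1}(A_i\cap B_{n,k})$, and then $\delta_{n,k}(p_{n,k})\in B_{n,k}\setminus\bigcup_{i=0}^\infty A_i\In Y$. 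Performing this for all $\langle n,k\rangle$ at once is a single instance of $\widehat{\BCT_{0,\IN^\IN}}$, so the preprocessing map $K$ is computable. The output map $H$ collects the resulting points into the sequence $(\delta_{n,k}(p_{n,k}))_{\langle n,k\rangle}$, which is dense in $Y$ by the diameter argument above; since $H$ needs only the fixed family $(\delta_{n,k})_{\langle n,k\rangle}$ and not the original input, the reduction is strong. Composing with the equivalence $\widehat{\BCT_{0,\IN^\IN}}\equivSW\BCT_{0,\IN^\IN}$ from Proposition~\ref{prop:BCT02-parallelizability} yields $\DBCT_{0,X}\leqSW\BCT_{0,\IN^\IN}$. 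The main obstacle is the subspace nowhere-density check together with the bookkeeping that keeps the reduction strong rather than merely ordinary; everything else is routine uniformization of the earlier lemmas.
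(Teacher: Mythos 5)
Your proof is correct and takes essentially the same route as the paper's: restrict the given nowhere dense sets to the uniformly computable ball spaces $B_{n,k}$ of Lemma~\ref{lem:closure-balls}, pull them back along the uniform Cauchy maps $\delta_{n,k}$ obtained from Lemma~\ref{lem:Cauchy}, and collapse the countably many resulting instances into one via the strong parallelizability of $\BCT_{0,\IN^\IN}$ (Proposition~\ref{prop:BCT02-parallelizability}). The only difference is that you spell out details the paper leaves implicit, namely the verification that intersection with $B_{n,k}$ preserves nowhere density and the diameter argument showing that one point per ball yields a dense sequence.
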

\begin{proof}
Firstly, we note that by a uniform application of the method described in the proof of Lemma~\ref{lem:Cauchy},
where we use the dense sequences $\alpha_{n,k}$ according to Lemma~\ref{lem:closure-balls},
we obtain a computable sequence $(\delta_{n,k})_{\langle n,k\rangle}$ of maps $\delta_{n,k}:\IN^\IN\to B_{n,k}$
such that $\delta_{n,k}^{-1}(A)$ is nowhere dense in $\IN^\IN$ for every nowhere dense $A\In B_{n,k}$
and $n,k\in\IN$. 
Given a sequence $(A_i)_i$ of nowhere dense subsets $A_i\In X$, we can 
uniformly compute sequences $(A_{n,k,i})_i$ with $A_{n,k,i}:=\iota_{n,k}^{-1}(A_i)=A_i\cap B_{n,k}$
by Lemma~\ref{lem:closure-balls},
which are nowhere dense in $B_{n,k}$. By Proposition~\ref{prop:BCT02-parallelizability} this implies 
\[\DBCT_{0,X}\leqSW\bigtimes_{\langle n,k\rangle=0}^\infty \BCT_{0,B_{n,k}}\leqSW\widehat{\BCT_{0,\IN^\IN}}\leqSW\BCT_{0,\IN^\IN}.\]
\end{proof}

Using Proposition~\ref{prop:DBCT}, Theorem~\ref{thm:jumps}, Corollary~\ref{cor:perfect-polish}, 
the observation that\linebreak
$\DBCT_{2,X}\leqSW\DBCT_{0,X}'$ holds by Fact~\ref{fact:+-} and the
fact that $\BCT_{i,X}\leqSW\DBCT_{i,X}$ obviously holds, we obtain the desired main result of this section.

\begin{corollary}[Dense Baire Category Theorem]
\label{cor:DBCT}
$\DBCT_{i,X}\equivSW\BCT_{i,X}$ for every computable perfect Polish space $X$ and $i\in\{0,2\}$.
\end{corollary}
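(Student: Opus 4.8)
The plan is to derive both equivalences purely by assembling the reductions already established in this section together with the jump characterization of $\BCT_2$. The easy direction is common to both cases: since any output of $\DBCT_{i,X}$ is a sequence dense in the complement $X\setminus\bigcup_i A_i$, and the domains of $\BCT_{i,X}$ and $\DBCT_{i,X}$ coincide, a realizer of $\DBCT_{i,X}$ can be turned into one of $\BCT_{i,X}$ simply by reading off the first entry of the dense sequence. This yields $\BCT_{i,X}\leqSW\DBCT_{i,X}$ for $i\in\{0,2\}$, so it remains only to prove the reverse reductions.

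For $i=0$ I would chain Proposition~\ref{prop:DBCT} with Corollary~\ref{cor:perfect-polish}. Proposition~\ref{prop:DBCT} gives $\DBCT_{0,X}\leqSW\BCT_{0,\IN^\IN}$ for every computable Polish space, and Corollary~\ref{cor:perfect-polish} gives $\BCT_{0,\IN^\IN}\equivSW\BCT_{0,X}$ for every computable perfect Polish space $X$. Composing these yields $\DBCT_{0,X}\leqSW\BCT_{0,X}$, and hence $\DBCT_{0,X}\equivSW\BCT_{0,X}$.

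For $i=2$ I would pass through the jump. First, the identity translating positive into negative information is strongly reducible to $\lim$ by Fact~\ref{fact:+-}, so $\psi_+\leq\psi_-'$; combined with the commutation $[\delta^\IN]'\equiv(\delta')^\IN$ of the jump with countable products of representations (as used in the proof of Proposition~\ref{prop:BCT02-parallelizability}), and the fact that the only difference between $\DBCT_{2,X}$ and $\DBCT_{0,X}$ is the input representation, this gives $\DBCT_{2,X}\leqSW\DBCT_{0,X}'$. Since the jump is monotone with respect to strong Weihrauch reducibility, the reduction $\DBCT_{0,X}\leqSW\BCT_{0,X}$ just established lifts to $\DBCT_{0,X}'\leqSW\BCT_{0,X}'$. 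Finally, Theorem~\ref{thm:jumps} identifies $\BCT_{0,X}'\equivSW\BCT_{2,X}$ for perfect spaces. Chaining $\DBCT_{2,X}\leqSW\DBCT_{0,X}'\leqSW\BCT_{0,X}'\equivSW\BCT_{2,X}$ with the easy direction completes the argument.

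Since every quantitative step has already been carried out earlier, the corollary is essentially bookkeeping. The one point requiring a little care is the first reduction for $i=2$, where one must justify that translating the positive-information input to a negative-information input ``one level up'' exactly realizes the jump $\DBCT_{0,X}'$; this is precisely what Fact~\ref{fact:+-} together with the product--jump commutation provide, and I expect no genuine obstacle beyond this verification.
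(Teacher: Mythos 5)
Your proposal is correct and follows essentially the same route as the paper: the easy direction $\BCT_{i,X}\leqSW\DBCT_{i,X}$, then $\DBCT_{0,X}\leqSW\BCT_{0,\IN^\IN}\equivSW\BCT_{0,X}$ via Proposition~\ref{prop:DBCT} and Corollary~\ref{cor:perfect-polish}, and for $i=2$ the chain $\DBCT_{2,X}\leqSW\DBCT_{0,X}'\leqSW\BCT_{0,X}'\equivSW\BCT_{2,X}$ using Fact~\ref{fact:+-}, monotonicity of the jump under $\leqSW$, and Theorem~\ref{thm:jumps}. This is exactly the assembly of ingredients the paper cites, with the bookkeeping (including the product--jump commutation $[\delta^\IN]'\equiv(\delta')^\IN$) made explicit.
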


\section{$\pO{1}$--Genericity}
\label{sec:Pi01-genericity}

The purpose of this section is to classify yet another version of the Baire Category Theorem
that has been called $\pO{1}\G$, which stands for $\pO{1}$--genericity \cite[page~5823]{HSS09}.
Essentially, $\pO{1}\G$ is a version of the non-discrete Baire Category Theorem on Cantor space with a variant $\psi_\#$ 
of the cluster point representation $\psi_*$ on the input side. 
In the following we use the representation $\psi_-$ of $\AA_-(\{0,1\}^*)$.
We define the representation $\psi_\#$ of the set $\AA_\#(2^\IN)$ of closed subsets of $2^\IN$ by
\[\psi_\#(p):=2^\IN\setminus \bigcup_{w\in\psi_-(p)}w2^\IN.\]
So a closed subset of $2^\IN$ is described here as the complement of a union of balls given by words,
which are presented negatively, i.e., by listing all words which are not used.
In terms of this representation $\psi_\#$ of closed sets $\pO{1}\G$ is just the corresponding variant of $\BCT_0$ or $\BCT_2$. 

\begin{definition}
Let $\pO{1}\G:\AA_\#(2^\IN)^\IN\mto2^\IN$ be defined by
\[\pO{1}\G((A_i)_i):=\bigcap_{i=0}^\infty2^\IN\setminus A_i\]
with $\dom(\pO{1}\G):=\{(A_i)_i:(\forall i)\;A_i^\circ=\emptyset\}$.
\end{definition} 

As it turns out, this variant of the Baire Category Theorem is equivalent to $\BCT_0'$.
This follows from the following result (which is related to the fact that $\pO{1}\G$ and $\Delta^{0}_2\G$ are equivalent, 
as mentioned following \cite[Definition~9.44]{Hir15}).

\begin{proposition}
\label{prop:non-standard-closed-sets}
$\id:\AA_-(2^\IN)'\to\AA_\#(2^\IN)$ is a computable isomorphism, i.e., $\id$ as well
as its inverse are computable. 
\end{proposition}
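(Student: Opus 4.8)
The plan is to establish the two computable reductions separately, namely that $\id$ and its inverse translate names, i.e.\ $\psi_\#\leq\psi_-'$ and $\psi_-'\leq\psi_\#$. Throughout I will exploit two features special to Cantor space: every basic ball $w2^\IN$ is clopen, and $2^\IN$ is compact, so the relation $w2^\IN\In U$ for an open $U$ given as a union of balls is recognizable by a finite subcover search. For a closed $A\In2^\IN$ it is convenient to keep in mind the word set $S_A:=\{w\in\{0,1\}^*:w2^\IN\cap A=\emptyset\}$: any valid $\psi_\#$-name of $A$ enumerates (negatively) a covering subset $S\In S_A$ with $\bigcup_{w\in S}w2^\IN=2^\IN\setminus A$, while the complement $\{0,1\}^*\setminus S_A$ is exactly the tree of prefixes meeting $A$.

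For the reduction $\psi_\#\leq\psi_-'$ I would argue directly. Given a $\psi_\#$-name of $A$ we can enumerate the complement $T:=\{0,1\}^*\setminus S$, where $S$ is the underlying covering set. At stage $s$ let $S^{[s]}$ be all words not yet enumerated into $T$; these sets decrease to $S$ and each word stabilizes. Reading $S^{[s]}$ as the current word list of a $\psi_-$-approximation and encoding the whole sequence in the standard way, the coordinatewise limit is a genuine $\psi_-$-name whose listed words are exactly those of $S$. Since every $w\in S$ satisfies $w2^\IN\In2^\IN\setminus A$ and $\bigcup_{w\in S}w2^\IN=2^\IN\setminus A$, this limit name indeed names $A$. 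The transient over-approximations $S^{[s]}$ may list words with $w2^\IN\cap A\neq\emptyset$, but this is harmless because only the limit name has to be a valid $\psi_-$-name, which is precisely what the jump allows.

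For the harder reduction $\psi_-'\leq\psi_\#$ I would route through the cluster point representation, using $\psi_-'\equiv\psi_*$ from Proposition~\ref{prop:closed-sets}. So assume $A=\CL_{2^\IN\sqcup\{\infty\}}(x_n)_n\cap2^\IN$ is given by a sequence $(x_n)_n$. The goal is to enumerate a c.e.\ set $T$ with $S:=\{0,1\}^*\setminus T$ a valid cover; this forces $T\supseteq\{0,1\}^*\setminus S_A$ while still leaving, for every $x\notin A$, some prefix outside $T$. The construction enumerates $w$ into $T$ once more than $|w|$ many indices $n$ with $x_n\in w2^\IN$ have been seen. By clopenness this count is decidable, so $T$ is c.e. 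If $w2^\IN\cap A\neq\emptyset$, then infinitely many $x_n$ lie in the clopen ball $w2^\IN$, so $w$ is eventually enumerated and $T\supseteq\{0,1\}^*\setminus S_A$, giving $\bigcup_{w\in S}w2^\IN\In2^\IN\setminus A$. Conversely, if $x\notin A$ then $x$ has a prefix $w_x$ with $w_x2^\IN\cap A=\emptyset$, so only finitely many $x_n$ lie in $w_x2^\IN$; every sufficiently long prefix $v\prefix x$ then contains at most that fixed finite number of terms, so once $|v|$ exceeds it, $v$ is never enumerated into $T$ and serves as an escape prefix witnessing $x\in\bigcup_{w\in S}w2^\IN$. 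Hence $\bigcup_{w\in S}w2^\IN=2^\IN\setminus A$ and the enumeration of $T$ is a valid $\psi_\#$-name.

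The empty set is handled exactly as in Proposition~\ref{prop:closed-sets}: the point $\infty$ guarantees that $A=\emptyset$ still possesses a name, and in each construction the terms $x_n=\infty$ contribute to no ball, so the degenerate case yields the all-covering $S$ and the trivial limit name, respectively. The main obstacle is the reduction $\psi_-'\leq\psi_\#$, where a single irrevocable c.e.\ enumeration of $T$ must simultaneously capture every prefix meeting $A$ yet preserve an escape prefix along every path avoiding $A$; the device that makes this possible is the threshold growing with $|w|$ together with the clopen, compact structure of Cantor space, which confines the relevant point counts for balls disjoint from $A$ to a finite bound.
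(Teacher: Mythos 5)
Your proposal is correct, and for the substantive direction it takes a genuinely different route from the paper. For the easy direction ($\psi_\#\leq\psi_-'$, i.e.\ computability of the inverse) your coordinatewise stabilization argument is essentially the same content as the paper's, which simply cites Fact~\ref{fact:+-} applied to the discrete space $X=\{0,1\}^*$. The real difference is in the hard direction ($\psi_-'\leq\psi_\#$): the paper works directly on a $\psi_-'$-name viewed as a double list $(w_{ij})_{i,j}$ of words with convergent columns, and runs a finite-injury-style construction with ``forbidden word'' lists $F_i$, priority clearing, and an output of all unprotected words, whose verification requires tracking the stage dynamics. You instead reroute through the cluster point representation via $\psi_*\equiv\psi_-'$ (Proposition~\ref{prop:closed-sets}, already established earlier in the paper, so there is no circularity) and then give a one-shot counting construction: enumerate $w$ as ``unused'' once more than $|w|$ terms of the sequence are observed in $w2^\IN$. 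The verification is then local and clean: if $w2^\IN$ meets $A$, the cluster point forces infinitely many terms into $w2^\IN$, so $w$ is enumerated; if $x\notin A$, compactness of a clopen ball disjoint from $A$ bounds the number of terms near $x$ by some $N$, and any prefix of $x$ of length at least $N$ inside that ball never reaches its threshold, so it survives as an escape prefix. Both arguments exploit Cantor-space-specific structure (the paper: refinability of cylinders into longer cylinders; you: compactness plus decidability of clopen membership), but yours trades the paper's self-contained bookkeeping for brevity at the cost of invoking the $\psi_*$ machinery. One cosmetic imprecision: in the degenerate case $A=\emptyset$ your set $S$ need not be all of $\{0,1\}^*$ (short words can still exceed their threshold), but the covering identity $\bigcup_{w\in S}w2^\IN=2^\IN$ still follows from your general escape-prefix argument, so no special case is actually needed there.
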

\begin{proof}
It follows from Fact~\ref{fact:+-}, applied to the space $X=\{0,1\}^*$, that the inverse of $\id$ is computable.
We need to prove that $\id$ is computable too.
Given a double list $(w_{ij})_{i,j}$ of words $w_{ij}\in\{0,1\}^*$ such that
$w_i:=\lim_{j\to\infty}w_{ij}$ exists (with respect to the discrete metric on $\{0,1\}^*$), we need to compute
a list $(v_i)_i$ of words with $E:=\{v\in\{0,1\}^*:(\forall i)\;v_i\not=v\}$ such that $U:=\bigcup_{i=0}^\infty w_i2^\IN=\bigcup_{v\in E}v2^\IN$.
We describe an algorithm that generates a corresponding list $(v_i)_i$, given $(w_{ij})_{i,j}$.
The algorithm works in stages $s=\langle i,j\rangle=0,1,2,...$ and for bookkeeping purposes
it works with finite sets $F_i\In\{0,1\}^*$ of ``forbidden words'' for each column $i$, which 
are changed during the course of the computation. 
Initially all these sets are empty.
In stage $s=\langle i,j\rangle$ we inspect the word $w_{ij}$ with the following algorithm:
\begin{enumerate}
\item If $j=0$ or $w_{ij-1}\not=w_{ij}$ or $F_i=\emptyset$, i.e., 
        if we have a new word in column $i$ or the forbidden word list of column $i$ is empty, 
        then column $i$ ``requires attention'' and we
        set $F_i:=\{u_0,...,u_k\}$, where the $u_i$ are words that are 
        longer than any word $v$ that has been written to the output yet, with $k$ and the $u_i$ minimal,
        and $\bigcup_{l=0}^ku_l2^\IN=w_{ij}2^\IN$.
        We also set $F_k:=\emptyset$ for all $k>i$, i.e., we clear all forbidden word lists of lower priority. 
\item We check all the words $v\in\{0,1\}^*$ with number less or equal to $s$ (with respect to some standard enumeration of words)
         and we write each corresponding word $v$ to the output, provided that $v\not\in\bigcup_{l=0}^\infty F_l$ (which we can check
        since this set is finite at any time).
\item If no word $v$ has been written in the previous step, then we write the empty word to the output.
\end{enumerate}
Since the words in each column converge, each column $i$ requires attention at most finitely many times.
When column $i$ requires attention for the last time at stage $s=\langle i,j\rangle$, then the forbidden word list $F_i$ will be filled
with words that ensure $w_{ij}=w_i$ is covered by $E$ in the sense that $w_i2^\IN\In\bigcup_{v\in E}v2^\IN$. 
On the other hand, up to stage $s$ all words $v$ up to number $s$ are written to the output, provided 
they are not included in $\bigcup_{l=0}^i F_l$. This finally ensures $U=\bigcup_{v\in E}v2^\IN$.
\end{proof}

From this proposition and Corollary~\ref{cor:perfect-polish} we directly get the desired corollary.

\begin{corollary}
\label{cor:Pi01-G}
$\pO{1}\G\equivSW\BCT_2\equivSW\BCT_0'$ for every computable perfect Polish space.
\end{corollary}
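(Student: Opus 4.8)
The plan is to treat this corollary as essentially a formal consequence of results already proved, since the strong equivalence $\BCT_{2,X}\equivSW\BCT_{0,X}'$ is \emph{exactly} Theorem~\ref{thm:jumps}, and only the identification of $\pO{1}\G$ with a jump of $\BCT_0$ requires a genuine argument. The central observation is that $\pO{1}\G$ is, by its very definition, nothing but the point-producing version $\BCT_{0,2^\IN}$ on Cantor space: both produce a point of $\bigcap_{i=0}^\infty(2^\IN\setminus A_i)=2^\IN\setminus\bigcup_{i=0}^\infty A_i$ on the same domain of nowhere dense closed sets. The only difference is that $\pO{1}\G$ receives its input sequence through the representation $\psi_\#$ of $\AA_\#(2^\IN)$ rather than through $\psi_-$. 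So first I would rephrase $\pO{1}\G$ as $\BCT_{0,2^\IN}$ equipped with the input representation $\psi_\#^\IN$ in place of $\psi_-^\IN$.

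Next I would rewrite this input representation. By Proposition~\ref{prop:non-standard-closed-sets} we have $\psi_\#\equiv\psi_-'$ on $2^\IN$, and combining this with the identity $[\delta^\IN]'\equiv(\delta')^\IN$ (noted in the proof of Proposition~\ref{prop:BCT02-parallelizability}) applied to $\delta=\psi_-$ gives
\[\psi_\#^\IN\equiv(\psi_-')^\IN\equiv(\psi_-^\IN)'.\]
Thus the input space of $\pO{1}\G$ carries precisely the jumped representation of the input space of $\BCT_{0,2^\IN}$. Since replacing the input representation of a problem $f$ by its jump is exactly the jump operation $f\mapsto f'$, and since the underlying map and output side of $\pO{1}\G$ coincide with those of $\BCT_{0,2^\IN}$, this yields $\pO{1}\G\equivSW\BCT_{0,2^\IN}'$ (the equivalence is strong because the representation identity of Proposition~\ref{prop:non-standard-closed-sets} is a computable isomorphism that leaves the output untouched).

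Finally I would transport everything to the ambient space $X$. By Corollary~\ref{cor:perfect-polish} we have $\BCT_{0,X}\equivSW\BCT_{0,2^\IN}$, and since the jump is monotone with respect to $\leqSW$ it preserves strong equivalences, so $\BCT_{0,X}'\equivSW\BCT_{0,2^\IN}'$. Theorem~\ref{thm:jumps} applied to the computable perfect Polish space $X$ then gives $\BCT_{0,X}'\equivSW\BCT_{2,X}$. Chaining these produces
\[\pO{1}\G\equivSW\BCT_{0,2^\IN}'\equivSW\BCT_{0,X}'\equivSW\BCT_{2,X},\]
which together with $\BCT_{2,X}\equivSW\BCT_{0,X}'$ (again Theorem~\ref{thm:jumps}) is the claimed chain of equivalences. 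The only real subtlety, and hence the main obstacle, is the representation bookkeeping: one must confirm that the definitional rewriting of $\pO{1}\G$ as a jump is strong, and that the representation identities commute correctly with both the countable-sequence constructor and the jump. All of this is routine once Proposition~\ref{prop:non-standard-closed-sets} and the identity $[\delta^\IN]'\equiv(\delta')^\IN$ are in hand.
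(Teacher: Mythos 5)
Your proposal is correct and follows essentially the same route as the paper, which derives the corollary directly from Proposition~\ref{prop:non-standard-closed-sets} (i.e., $\psi_\#\equiv\psi_-'$) together with Corollary~\ref{cor:perfect-polish} and Theorem~\ref{thm:jumps}. You have merely made explicit the representation bookkeeping (the identity $[\delta^\IN]'\equiv(\delta')^\IN$ and the $\leqSW$-monotonicity of the jump) that the paper leaves implicit in its one-sentence justification.
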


\section{1--Genericity}
\label{sec:genericity}

In this section we compare $\BCT_0$ and $\BCT_2$  with the problem $1\dash\GEN$ of $1$--genericity. 
If not mentioned otherwise, then all $\BCT_0$ and $\BCT_2$ in this section are considered with respect to Cantor space $2^\IN$,
which is not an essential restriction by Corollary~\ref{cor:perfect-polish}, but more convenient since $1$--genericity is typically considered in Cantor space.

We recall some definitions. For one, we assume that we have some effective standard enumeration $(U_i^q)_{i\in\IN}$ of the
subsets $U_i^q\In2^\IN$ that are c.e.\ open in $q\in2^\IN$ (and which can be defined by $U_i^q:=\{p\in2^\IN:\varphi_i^{\langle p,q\rangle}(0)\downarrow\}$). 
Then the {\em Turing jump operator} $\J^q$ relatively to $q$ can be defined by
\[\J^q:2^\IN\to2^\IN,\J^q(p)(i):=\left\{\begin{array}{ll}
   1 & \mbox{if $p\in U_i^q$}\\
   0 & \mbox{otherwise}
\end{array}\right..\]

Now a point $p\in2^\IN$ is called {\em 1--generic in} $q\in2^\IN$ if for all $i\in\IN$ there exists some $w\prefix p$ such that
$w2^\IN\In U_i^q$ or $w2^\IN\cap U_i^q=\emptyset$.
As observed in \cite[Lemma~9.3]{BBP12} a point $p\in2^\IN$ is {\em 1--generic in} $q$ if and only if it is a point
of continuity of $\J^q$. We call $p$ just {\em 1--generic} if it is $1$--generic in some computable $q\in2^\IN$.
We use the concept of $1$--genericity in order to define the problem $1\dash\GEN$ of {\em 1--genericity}.

\begin{definition}[Genericity]
We define $1\dash\GEN:2^\IN\mto2^\IN$ by 
\[1\dash\GEN(q):=\{p:p\mbox{ is $1$--generic in $q$}\}\]
for all $p\in2^\IN$.
\end{definition}

If $p\leqT q$, then $1\dash\GEN(q)\In1\dash\GEN(p)$. The points $p$ which are $1$--generic relative to $q$
can also be described as follows.
For a subset $A\In X$ we denote by $A^{\rm c}=X\setminus A$ the {\em complement} of $A$.

\begin{lemma}[Generic points]
\label{lem:generic}
For all $p\in2^\IN$ we obtain:
\[1\dash\GEN(p)=\bigcap_{i=0}^\infty\left(U_i^p\cup\overline{U_i^p}^{\;\rm c}\right)=\bigcap_{i=0}^\infty\left(2^\IN\setminus\partial U_i^p\right).\] 
\end{lemma}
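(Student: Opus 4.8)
$\,$The plan is to unwind the definition of $1$--genericity directly and identify the two displayed expressions with the set of generic points. Recall that $p$ is $1$--generic in $q$ iff for every $i$ there is some prefix $w \prefix p$ with $w2^\IN \In U_i^q$ or $w2^\IN \cap U_i^q = \emptyset$. First I would fix $q = p$ (the relativization parameter in the statement), fix an index $i$, and analyze the single requirement associated with $U_i^p$. The key observation is that the requirement ``there is $w \prefix p$ with $w2^\IN \In U_i^p$ or $w2^\IN \cap U_i^p = \emptyset$'' holds for a given $p$ exactly when $p$ avoids the boundary $\partial U_i^p$. This reduces the whole proof to a pointwise topological equivalence, intersected over all $i$.

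\medskip
The main step is to verify, for a fixed open set $U \In 2^\IN$, the equality
\begin{equation*}
\{p : (\exists w \prefix p)\, (w2^\IN \In U \OR w2^\IN \cap U = \eps)\} = 2^\IN \setminus \partial U = U \cup \overline{U}^{\;\rm c}.
\end{equation*}
For the inclusion ``$\In$'', suppose some $w \prefix p$ has $w2^\IN \In U$; then $p$ lies in the interior of $U$, so $p \notin \partial U$. If instead $w2^\IN \cap U = \eps$, then $w2^\IN$ is an open neighborhood of $p$ disjoint from $U$, so $p \notin \overline{U}$, hence $p \notin \partial U$. For the reverse inclusion, if $p \notin \partial U$ then either $p$ is in the interior $U^\circ$ (since $U$ is open, $U^\circ = U$) or $p$ lies in the exterior $\overline{U}^{\;\rm c}$. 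In the first case, because the basic clopen cylinders $w2^\IN$ form a neighborhood base at $p$, some $w \prefix p$ satisfies $w2^\IN \In U$. In the second case, some cylinder neighborhood $w2^\IN$ of $p$ is disjoint from $\overline{U}$, hence from $U$, giving $w2^\IN \cap U = \eps$. This uses only that $2^\IN$ is a zero-dimensional metric space whose clopen cylinders $\{w2^\IN : w \in \{0,1\}^*\}$ form a neighborhood base, so the prefix condition in the definition of genericity is equivalent to an ordinary topological neighborhood condition.

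\medskip
Once this pointwise equivalence is established, the rest is bookkeeping: intersecting over all $i \in \IN$ turns the defining condition of $1\dash\GEN(p)$ into $\bigcap_{i=0}^\infty (2^\IN \setminus \partial U_i^p)$, which matches the right-hand expression, and the middle expression follows from the elementary set identity $2^\IN \setminus \partial U = U^\circ \cup \overline{U}^{\;\rm c} = U \cup \overline{U}^{\;\rm c}$, valid since $U = U_i^p$ is open so $U^\circ = U$. The only point that requires a little care is the equivalence between the \emph{syntactic} prefix quantifier ``$\exists w \prefix p$'' appearing in the definition and the \emph{topological} neighborhood statement; this is exactly where zero-dimensionality of Cantor space enters, and I expect it to be the one place where the argument could be written sloppily, although it is not genuinely difficult. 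No appeal to any deeper result from the excerpt is needed; the lemma is purely a reformulation of the definition of $1$--genericity via the boundary operator $\partial$.
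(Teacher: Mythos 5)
Your proof is correct: the paper states this lemma without any proof, treating it as an immediate reformulation of the definition of $1$--genericity, and your argument supplies exactly the intended details --- namely the equivalence of the prefix condition ``$(\exists w\prefix p)\,(w2^\IN\In U_i^p$ or $w2^\IN\cap U_i^p=\emptyset)$'' with the topological condition $p\notin\partial U_i^p$ (using that the cylinders $w2^\IN$ form a neighborhood base in Cantor space), together with the set identity $2^\IN\setminus\partial U=U\cup\overline{U}^{\;\rm c}$ for open $U$. Nothing further is needed, so your write-up is a faithful completion of what the paper leaves implicit.
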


Here $\partial U_i^p=\partial\left((U_i^p)^{\rm c}\right)$ and $((U_i^p)^{\rm c})_i$ is a computable sequence in $\AA_-(2^\IN)$.
Since the boundaries $\partial U_i^p$ are nowhere dense, it follows that the set of $1$--generic points in $p$ is comeager for each $p$.
We also note the following relation between the Baire Category Theorem $\BCT_0$ and $1\dash\GEN$.

\begin{proposition}
\label{prop:BCT0-1GEN}
$\BCT_0\leqSW1\dash\GEN$ for Cantor space $X=2^\IN$.
\end{proposition}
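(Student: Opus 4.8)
The plan is to establish a strong reduction by exhibiting computable maps $K,H$ with $HGK\vdash\BCT_0$ for every $G\vdash1\dash\GEN$, where $H$ will simply be the identity on names, so that all the work is carried by the input modifier $K$. The guiding idea is Lemma~\ref{lem:generic}, which identifies the $1$-generic points in $q$ with $\bigcap_{i=0}^\infty(2^\IN\setminus\partial U_i^q)$. Since each input $A_i$ is closed and nowhere dense, its complement $A_i^{\rm c}$ is dense open, and for a dense open set $V$ one has $\overline V=2^\IN$, hence $\partial V=2^\IN\setminus V$. Thus, if $K$ manages to arrange that certain of the standard relativized c.e.\ open sets $U_i^q$ coincide with the complements $A_i^{\rm c}$, then $2^\IN\setminus\partial U_i^q=A_i^{\rm c}$ at those indices, and any point that is $1$-generic in $q$ is forced into $\bigcap_i A_i^{\rm c}=2^\IN\setminus\bigcup_i A_i=\BCT_0((A_i)_i)$.

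The construction of $K$ is the technical heart. Given a $\psi_-^\IN$-name of the sequence $(A_i)_i$, i.e.\ for each $i$ an enumeration of rational balls whose union is $A_i^{\rm c}$, the sets $A_i^{\rm c}$ are uniformly c.e.\ open relative to this name. I would let $K$ produce a $q\in2^\IN$ that codes the input name, and then invoke the universal enumeration $(U_i^q)_i$ together with the parametrization (smn) theorem to obtain a computable injection $e\colon\IN\to\IN$ with $U_{e(i)}^q=A_i^{\rm c}$ for all $i$; concretely this only amounts to running, from $q$, the ball enumeration that produces $A_i^{\rm c}$. Note that $q$ lies in $\dom(1\dash\GEN)$ automatically, since every $q\in2^\IN$ has $1$-generic points by the Baire Category Theorem.

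It then remains to assemble the verification. As $A_i$ is nowhere dense, $A_i^{\rm c}$ is dense open, so $\overline{U_{e(i)}^q}=2^\IN$, whence $\partial U_{e(i)}^q=A_i$ and $2^\IN\setminus\partial U_{e(i)}^q=A_i^{\rm c}$. Consequently, if $p=G(q)$ is $1$-generic in $q$, Lemma~\ref{lem:generic} gives
\[p\in\bigcap_{i=0}^\infty\left(2^\IN\setminus\partial U_i^q\right)\In\bigcap_{i=0}^\infty\left(2^\IN\setminus\partial U_{e(i)}^q\right)=\bigcap_{i=0}^\infty A_i^{\rm c}=2^\IN\setminus\bigcup_{i=0}^\infty A_i,\]
so that $p\in\BCT_0((A_i)_i)$. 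Since the generic point $p$ is itself the sought witness, $H$ may be taken to be the identity, yielding $\BCT_0\leqSW1\dash\GEN$. The one point requiring care is that the ``exterior'' alternative in $1$-genericity, namely $w2^\IN\cap U_{e(i)}^q=\emptyset$ for a prefix $w\prefix p$, can never occur, since a nonempty basic open set cannot avoid the dense set $A_i^{\rm c}$; routing the argument through Lemma~\ref{lem:generic} makes exactly this obstruction disappear.
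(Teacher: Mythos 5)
Your proposal is correct and follows essentially the same route as the paper's own proof: both arguments rest on Lemma~\ref{lem:generic}, the observation that $\partial A_i^{\rm c}=A_i$ for closed nowhere dense $A_i$, and the fact that the complements $A_i^{\rm c}$ are uniformly c.e.\ open relative to the input name, so that $1\dash\GEN(q)\In\BCT_0((A_i)_i)$ and the generic point itself serves as output (i.e., $H=\id$). Your additional remarks --- coding the $\psi_-^\IN$-name into Cantor space and noting that the ``exterior'' alternative of genericity is blocked by density --- only make explicit what the paper leaves implicit.
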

\begin{proof}
We note that for a nowhere dense subset $A$ we have $A=\partial A=\partial A^{\rm c}$.
Hence we obtain for every sequence $(A_i)$ of closed nowhere dense subsets $A_i\In2^\IN$
\[\BCT_0(A_i)=2^\IN\setminus\bigcup_{i=0}^\infty A_i=\bigcap_{i=0}^\infty\left(2^\IN\setminus\partial A_i^{\rm c}\right).\]
If the sequence $(A_i)$ is in $\AA_-(2^\IN)$ and computable from $p$, then there is a computable $s:\IN\to\IN$ such that
$A_i^{\rm c}=U_{s(i)}^p$. Hence $1\dash\GEN(p)\In\BCT_0(A_i)$ by Lemma~\ref{lem:generic}.
This implies $\BCT_0\leqSW1\dash\GEN$.
\end{proof}

With Fact~\ref{fact:boundary}, Lemma~\ref{lem:generic} and the observation that the sets $\partial U_i^p$ are nowhere dense, one obtains $1\dash\GEN\leqSW\BCT_0'$.
Together with Theorem~\ref{thm:jumps} we obtain the following.

\begin{corollary}
\label{cor:1GEN-BCT2-BCT0-LIM}
$\BCT_0\leqSW1\dash\GEN\leqSW\BCT_2\equivSW\BCT_0'\leqSW\lim$ for $X=2^\IN$.
\end{corollary}

We will sharpen this result by replacing $\lim$ with $\lim_\J$.
The Turing jump operator $\J:\IN^\IN\to\IN^\IN$ on Baire space $\IN^\IN$ induces some initial topology on $\IN^\IN$, which we call the {\em jump topology}.
This topology has been studied in \cite{Mil02a,BBP12}. 
Moreover, we recall that $\lim_\J$ denotes the limit map $\lim_\J:\In\IN^\IN\to\IN^\IN,\langle p_0,p_1,p_2,...\rangle\mapsto\lim_{i\to\infty}p_i$
restricted to sequences that converge with respect to the jump topology.
Hence, $\lim_\J$ is just a restriction of the ordinary limit operator $\lim:\In\IN^\IN\to\IN^\IN$
with respect to the Baire space topology on $\IN^\IN$ and as shown in \cite{BBP12}
one obtains $\lim_\J=\J^{-1}\circ\lim\circ\J^\IN$, where $\J^\IN\langle p_0,p_1,...\rangle:=\langle\J(p_0),\J(p_1),...\rangle$. 
In \cite{BBP12} a point $p\in\IN^\IN$
has been called {\em limit computable in the jump}, if there is a computable $q\in\IN^\IN$
such that $p=\lim_\J(q)$ and in \cite[Proposition~9.4]{BBP12} it has been shown that
every 1-generic limit computable $p\in\IN^\IN$ is limit computable in the jump (this holds analogously for $p\in2^\IN$).
Here we formulate a straightforward uniform version of this result.

\begin{proposition}[Limit computability in the jump]
\label{prop:limJ}
Let $f$ be a multi-valued function on represented spaces that has some limit computable realizer
whose range only contains 1--generic points. Then $f\leqSW\lim_\J$.
\end{proposition}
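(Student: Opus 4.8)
The plan is to verify that the realizer $F$ itself factors through $\lim_\J$ with trivial pre- and post-processing, which yields $f\leqSW\lim_\J$ at once. Since $F$ is limit computable there is a computable $\Phi:\In\IN^\IN\to\IN^\IN$ with $F=\lim\circ\Phi$; writing $\Phi(p)=\langle q_0,q_1,\dots\rangle$, this says that for each $p\in\dom(F)$ the sequence $(q_n)_n$ is computable from $p$ and converges to $F(p)$ in the ordinary Baire space topology. Recalling that $\lim_\J$ is the same limit operator restricted to those sequences that converge in the jump topology, it suffices to show that $(q_n)_n$ in fact converges to $F(p)$ in the jump topology; then $\lim_\J(\Phi(p))=F(p)$, so that $F=\lim_\J\circ\Phi$ and we may take pre-processing $K:=\Phi$ and post-processing $H:=\id$.

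The crux is thus the pointwise claim that a sequence converging ordinarily to a $1$--generic point $x$ also converges to $x$ in the jump topology. First I would invoke that the jump topology is the initial topology induced by $\J$, so that jump-convergence $q_n\to x$ is equivalent to $\J(q_n)\to\J(x)$ in the product topology, i.e.\ to the demand that for each index $i$ one has $\J(q_n)(i)=\J(x)(i)$ for all sufficiently large $n$. Here the $1$--genericity of $x$ is decisive: say $x$ is $1$--generic in a computable oracle $q$. Since $q$ is computable, every (unrelativized) c.e.\ open set $U_i$ is also c.e.\ open in $q$, so the $1$--genericity of $x$ in $q$ furnishes for each $i$ a prefix $w\prefix x$ with $w2^\IN\In U_i$ or $w2^\IN\cap U_i=\emptyset$. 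As $q_n\to x$ ordinarily, eventually $q_n\in w2^\IN$; in the first case this forces $q_n,x\in U_i$ and in the second $q_n,x\notin U_i$, so $\J(q_n)(i)=\J(x)(i)$ eventually, exactly as required.

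With the claim in hand the proof closes quickly. Applying it to $x=F(p)$ for each $p\in\dom(F)$ shows that $\Phi$ sends the domain of $f$ into $\dom(\lim_\J)$ and that $\lim_\J(\Phi(p))=F(p)$. Consequently $H\circ L\circ K=\lim_\J\circ\Phi=F$ is a realizer of $f$ for every realizer $L$ of $\lim_\J$ (all of which agree with $\lim_\J$ on its domain), whence $f\leqSW\lim_\J$.

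I expect the only genuine obstacle to be the pointwise claim of the second paragraph, which is the uniform reading of the non-uniform fact from \cite[Proposition~9.4]{BBP12} that $1$--generic limit computable points are limit computable in the jump. Everything else---the factorization $F=\lim\circ\Phi$, the identification of $\lim_\J$ as a restriction of $\lim$, and the choices $K=\Phi$, $H=\id$---is bookkeeping, so the argument is really just a matter of making that known fact uniform in the input.
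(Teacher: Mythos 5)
Your proof is correct and follows essentially the same route as the paper: factor the limit computable realizer $F$ as $\lim\circ\Phi$ with $\Phi$ computable, observe that the output sequences converge to $1$--generic points and hence converge in the jump topology, and conclude $F=\lim_\J\circ\Phi$, which gives $f\leqSW\lim_\J$. The only difference is cosmetic: you prove the key pointwise claim directly from the definition of $1$--genericity, whereas the paper invokes the fact (recalled earlier from \cite[Lemma~9.3]{BBP12}) that $1$--generic points are exactly the points of continuity of $\J$.
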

\begin{proof}
Let $F:\In\IN^\IN\to\IN^\IN$ be a realizer of $f$ that is limit computable and whose
range only contains 1--generic points. Then there is a computable $G$ such that
$F=\lim\circ G$. The range of $G$ contains only sequences $\langle p_0,p_1,p_2,...\rangle$
such that $(p_i)$ converges to some 1--generic $p$ and, since such a $p$ is a point of continuity of $\J$, the sequence $(\J(p_i))$ converges. This means that $(p_i)$ converges
in the jump topology and hence we even obtain $F=\lim_\J\circ G$.
This proves $f\leqSW\lim_\J$.
\end{proof}

We also note the following consequence of previous results.

\begin{proposition}[Genericity]
\label{prop:genericity}
$\BCT_0'\cap1\dash\GEN\equivSW\BCT_0'$ and $\BCT_2\cap1\dash\GEN\equivSW\BCT_2$ for $X=2^\IN$.
\end{proposition}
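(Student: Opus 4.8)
The plan is to prove the first equivalence $\BCT_0'\cap1\dash\GEN\equivSW\BCT_0'$ directly and then to transfer it to $\BCT_2$ using Theorem~\ref{thm:jumps}. Throughout I would work over $X=2^\IN$ and exploit that, for a sequence $A=(A_i)_i$ of nowhere dense closed sets, both $\BCT_0'(A)=2^\IN\setminus\bigcup_iA_i$ and $1\dash\GEN(q)$ are comeager: the former as the complement of a meager set, the latter by Lemma~\ref{lem:generic} together with the fact that each $\partial U_i^q$ is nowhere dense. Since the intersection of two comeager sets in the complete space $2^\IN$ is again comeager, and in particular non-empty by the Baire Category Theorem, we get $\dom(\BCT_0'\cap1\dash\GEN)=\dom(\BCT_0')\times\dom(1\dash\GEN)$.

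For the easy direction $\BCT_0'\leqSW\BCT_0'\cap1\dash\GEN$ I would invoke the general principle recorded before Lemma~\ref{lem:BCT02-dom}: both problems are pointed (the constant sequence of empty sets, already noted to be a computable point of $\dom(\BCT_0)$, remains computable with respect to the jump representation, and $1\dash\GEN$ has all of $2^\IN$ as its domain), so pointedness together with the product form of the domain established above yields $\BCT_0'\leqSW\BCT_0'\cap1\dash\GEN$.

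The substantial direction is $\BCT_0'\cap1\dash\GEN\leqSW\BCT_0'$, and the idea is to realise $1\dash\GEN$ \emph{inside} a single application of $\BCT_0'$ and to merge it with the genuine $\BCT_0'$ instance. Given $(A,q)$ with $A=(A_i)_i$ a sequence of nowhere dense closed sets in the jump representation $\psi_-'$ and $q\in2^\IN$, I would first compute from $q$ the sequence $((U_i^q)^{\rm c})_i$, which is a computable sequence in $\AA_-(2^\IN)$ (as noted after Lemma~\ref{lem:generic}), and then pass to the boundaries $B_i:=\partial U_i^q$. By Fact~\ref{fact:boundary} we have $\partial\leqSW\lim$, and exactly as Fact~\ref{fact:+-} was used to conclude that $\id\colon\AA_+(X)\to\AA_-(X)'$ is computable, this implies that $\partial\colon\AA_-(2^\IN)\to\AA_-(2^\IN)'$ is computable; hence $(B_i)_i$ is obtained as a computable sequence with respect to $\psi_-'$, and each $B_i$ is nowhere dense. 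Using $[\delta^\IN]'\equiv(\delta')^\IN$ as in the proof of Proposition~\ref{prop:BCT02-parallelizability}, I would interleave $(A_i)_i$ and $(B_i)_i$ into a single sequence $(C_k)_k$ of nowhere dense closed sets in $\AA_-(2^\IN)'$ and feed it to $\BCT_0'$. By the identity
\[
\left(2^\IN\setminus\bigcup_i A_i\right)\cap\left(2^\IN\setminus\bigcup_i B_i\right)=2^\IN\setminus\bigcup_k C_k,
\]
any output point $p\in\BCT_0'(C_k)_k$ lies in $\left(2^\IN\setminus\bigcup_iA_i\right)\cap\bigcap_i\left(2^\IN\setminus\partial U_i^q\right)=\BCT_0'(A)\cap1\dash\GEN(q)$ by Lemma~\ref{lem:generic}, so returning $p$ unchanged witnesses the reduction.

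Finally, for $\BCT_2\cap1\dash\GEN\equivSW\BCT_2$ I would use that the equivalence $\BCT_0'\equivSW\BCT_2$ of Theorem~\ref{thm:jumps} is realised on both sides by reductions with identity output modifier: one direction converts $\AA_+$-names into $\AA_-'$-names of the \emph{same} sets via $\id\colon\AA_+\to\AA_-'$, and the other replaces each $A_i$ by a nowhere dense superset $B_i\supseteq A_i$ via the map $M$ of Proposition~\ref{prop:cluster-closure}, so that $\BCT_2(B_i)=2^\IN\setminus\bigcup_iB_i\subseteq 2^\IN\setminus\bigcup_iA_i=\BCT_0'(A)$ (here $2^\IN$ is perfect). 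Since these conversions leave the component $1\dash\GEN(q)$ untouched and only shrink or preserve the first output set, they carry over verbatim to the intersections, giving $\BCT_0'\cap1\dash\GEN\equivSW\BCT_2\cap1\dash\GEN$; combining this with the first equivalence and $\BCT_0'\equivSW\BCT_2$ yields $\BCT_2\cap1\dash\GEN\equivSW\BCT_2$. The main obstacle I anticipate is the bookkeeping in the third paragraph, namely justifying cleanly that the limit-computable boundary map already delivers the sets $\partial U_i^q$ in the jump representation $\psi_-'$, so that no extra application of $\lim$ is needed and the merge with $(A_i)_i$ remains a single $\BCT_0'$ call rather than something strictly above $\BCT_0'$.
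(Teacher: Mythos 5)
Your proposal is correct and takes essentially the paper's approach: the easy direction via the product domain and pointedness, and the hard direction by converting $q$ into a $\BCT_0'$-instance $(\partial U_i^q)_i$ using Lemma~\ref{lem:generic} and Fact~\ref{fact:boundary} (with the same ``jumped target representation'' inference the paper makes after Fact~\ref{fact:+-}), then absorbing it into the given instance by the interleaving argument behind Proposition~\ref{prop:BCT02-parallelizability} --- the paper merely factors this as $\BCT_0'\cap1\dash\GEN\leqSW\BCT_0'\cap\BCT_0'\equivSW\BCT_0'$ rather than inlining the merge. The only divergence is the $\BCT_2$ case, which the paper handles by rerunning the same argument with Corollary~\ref{cor:boundary-approximation} in place of Fact~\ref{fact:boundary}, while you transfer the $\BCT_0'$ result through Theorem~\ref{thm:jumps} after checking that its witnessing reductions have identity output parts and only shrink solution sets; both variants rest on the same underlying ingredient (Proposition~\ref{prop:cluster-closure}), so this is a cosmetic rather than substantive difference.
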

\begin{proof}
Firstly, we note that $\BCT_0'\cap1\dash\GEN$ is densely realized by the Baire Category Theorem~\ref{thm:BCT-strong},
since the set of 1-generic points in each $p$ is comeager by Lemma~\ref{lem:generic} and in particular
$\dom(\BCT_0'\cap1\dash\GEN)=\dom(\BCT_0')\times\dom(1\dash\GEN)$. This implies $\BCT_0'\leqSW\BCT_0'\cap1\dash\GEN$.
With the help of Fact~\ref{fact:boundary} and Lemma~\ref{lem:generic} we can conclude that
$\BCT_0'\cap1\dash\GEN\leqSW\BCT_0'\cap\BCT_0'$.
Finally, Proposition~\ref{prop:BCT02-parallelizability} yields $\BCT_0'\cap\BCT_0'\equivSW\BCT_0'$.
Altogether, we obtain $\BCT_0'\equivSW\BCT_0'\cap1\dash\GEN$.
The proof for $\BCT_2$ in place of $\BCT_0'$ follows by an application of Corollary~\ref{cor:boundary-approximation}
in place of Fact~\ref{fact:boundary}.
\end{proof}

In particular, $\BCT_0'\cap1\dash\GEN\leqSW\lim$ has a realizer that is limit computable and whose range has only $1$--generic points. 
Hence we obtain $\BCT_0'\leqSW\lim_\J$ by Proposition~\ref{prop:limJ}.
This allows us to sharpen Corollary~\ref{cor:1GEN-BCT2-BCT0-LIM} in the desired way.

\begin{corollary}[Genericity]
\label{cor:genericity}
$\BCT_0\leqSW1\dash\GEN\leqSW\BCT_2\equivSW\BCT_0'\leqSW\lim_\J$ and $\BCT_0\lW1\dash\GEN$ and $\BCT_0'\lW\lim_\J$ for $X=2^\IN$.
\end{corollary}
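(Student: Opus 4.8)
The plan is to read off the reduction chain from results already established and then to prove the two strictness statements independently. The chain itself requires no new argument: $\BCT_0\leqSW1\dash\GEN$ is Proposition~\ref{prop:BCT0-1GEN}, the reductions $1\dash\GEN\leqSW\BCT_2\equivSW\BCT_0'$ are contained in Corollary~\ref{cor:1GEN-BCT2-BCT0-LIM}, and the new upper bound $\BCT_0'\leqSW\lim_\J$ is precisely what the discussion preceding this corollary supplies, combining $\BCT_0'\equivSW\BCT_0'\cap1\dash\GEN$ from Proposition~\ref{prop:genericity} with Proposition~\ref{prop:limJ}, since $\BCT_0'\cap1\dash\GEN$ has a limit computable realizer whose range consists only of $1$--generic points.

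For the first strictness, $\BCT_0\lW1\dash\GEN$, the plan is to show that $1\dash\GEN$ is not computable; this suffices, since $\BCT_0$ is computable by Fact~\ref{fact:BCT0-BCT1} and any problem Weihrauch reducible to a computable problem is itself computable, so $1\dash\GEN\leqW\BCT_0$ is impossible once non-computability is known. To see non-computability I would apply a putative computable realizer to the computable input $q=0^\IN$; it would return a computable point $p$ that is $1$--generic in $q$. But no computable $p$ can be $1$--generic in a computable $q$: the set $2^\IN\setminus\{p\}$ is c.e.\ open and therefore equals some $U_i^q$, and for this $i$ neither alternative in the definition of $1$--genericity can hold, because $w2^\IN\In2^\IN\setminus\{p\}$ fails for every $w\prefix p$ and $w2^\IN\cap(2^\IN\setminus\{p\})=\emptyset$ fails because $w2^\IN$ is infinite. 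This contradiction gives $1\dash\GEN\nleqW\BCT_0$.

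For the second strictness, $\BCT_0'\lW\lim_\J$, the plan is to separate the two problems via the discriminativeness notions of \cite{BHK15}. On one side, $\BCT_0'$ is $\omega$--indiscriminative by Proposition~\ref{prop:BCT02-indiscriminative}, and since $\ACC_\IN\leqW\C_2\equivSW\LLPO$ this already forces $\LLPO\nleqW\BCT_0'$. On the other side, I would establish $\C_2\leqW\lim_\J$ by the usual wait-and-see reduction: given negative information on a nonempty $A\In\{0,1\}$, output the sequence that guesses $0$ and switches irrevocably to $1$ the moment $0$ is excluded. The crucial point, and the one requiring the most care, is that this sequence is eventually constant and hence converges in the jump topology, so that it is a legitimate input for $\lim_\J$ and its limit is a point of $A$. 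Were $\lim_\J\leqW\BCT_0'$, we would obtain $\LLPO\leqW\C_2\leqW\lim_\J\leqW\BCT_0'$, a contradiction; hence $\lim_\J\nleqW\BCT_0'$, which together with $\BCT_0'\leqSW\lim_\J$ gives $\BCT_0'\lW\lim_\J$. I expect no serious obstacle beyond verifying that eventual constancy entails convergence in the jump topology, which is immediate since $\J$ is then eventually constant along the sequence.
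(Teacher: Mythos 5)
Your proposal is correct and follows essentially the same route as the paper: the chain is assembled from Proposition~\ref{prop:BCT0-1GEN}, Corollary~\ref{cor:1GEN-BCT2-BCT0-LIM}, and Propositions~\ref{prop:genericity} and \ref{prop:limJ}; the first strictness comes from computability of $\BCT_0$ versus non-computability of $1\dash\GEN$; and the second from $\lim_\J$ being discriminative while $\BCT_0'$ is ($\omega$--)indiscriminative by Proposition~\ref{prop:BCT02-indiscriminative}. The only difference is that you prove from scratch what the paper cites — namely that no computable point is $1$--generic in a computable oracle, and that $\C_2\leqW\lim_\J$ (the paper routes this through $\C_2\leqW\C_\IN\equivW\lim_\IN\leqW\lim_\J$, whose content is exactly your wait-and-see argument and the observation that eventually constant sequences converge in the jump topology) — and both of your inlined arguments are sound.
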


We obtain $\BCT_0\lW1\dash\GEN$ since $\BCT_0$ is computable and $1\dash\GEN$ is not.
We obtain $\BCT_0'\lW\lim_\J$ since 
$\C_2\leqW\C_\IN\equivW\lim_\IN\leqW\lim_\J$ and hence $\lim_\J$ is discriminative,
while $\BCT_0'$ is indiscriminative by Proposition~\ref{prop:BCT02-indiscriminative}.

We also note the following consequence of \cite[Theorem~14.11]{BGH15a}, which implies that
any single-valued probabilistic function to Cantor space $2^\IN$ has to map computable inputs to
computable outputs. Here a multi-valued function $f:\In X\mto Y$ on represented spaces $(X,\delta_X)$ and $(Y,\delta_Y)$
is called {\em probabilistic}, if there is a computable function $F:\In\IN^\IN\times2^\IN\to\IN^\IN$ such that
$\mu(\{r\in2^\IN:\delta_YF(p,r)\in f\delta_X(p)\})>0$ for all $p\in\dom(f\delta_X)$, where $\mu$ is the uniform measure on Cantor space $2^\IN$.

\begin{corollary}
\label{cor:limJ-probabilistic}
$\lim_\J$ is not probabilistic.
\end{corollary}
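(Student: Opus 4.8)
The plan is to argue by contradiction, leveraging that $\lim_\J$ is a genuine single-valued (partial) function, being a restriction of $\lim:\In\IN^\IN\to\IN^\IN$, together with the quoted consequence of \cite[Theorem~14.11]{BGH15a}. So suppose that $\lim_\J$ is probabilistic, witnessed by a computable $F:\In\IN^\IN\times2^\IN\to\IN^\IN$.

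First I would pass to a map with codomain $2^\IN$, since the quoted result speaks about functions into Cantor space. Let $g$ be $\lim_\J$ with its domain cut down to those $q$ for which $\lim_\J(q)\in2^\IN\In\IN^\IN$, regarded as a map $g:\In\IN^\IN\to2^\IN$. As the standard representation of $2^\IN$ is just the restriction of that of $\IN^\IN$ (each point being its own name), the same $F$ witnesses that $g$ is probabilistic: whenever $F(q,r)=\lim_\J(q)\in2^\IN$, the sequence $F(q,r)$ already is a $\delta_{2^\IN}$--name of $g(q)$, and cutting down the domain only shrinks the set over which the measure condition must hold. Thus $g$ is a single-valued probabilistic function into $2^\IN$, and by \cite[Theorem~14.11]{BGH15a} it must map computable inputs to computable outputs.

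Next I would produce a computable input on which $g$ fails this. Here the real work is to exhibit a non-computable point of $2^\IN$ that is limit computable in the jump. To this end I would take a $\dO{2}$ (i.e.\ limit computable) $1$--generic $p\in2^\IN$, obtained by the usual finite extension method relative to $\emptyset'$; such a $p$ is non-computable, since no computable point is $1$--generic. By \cite[Proposition~9.4]{BBP12}, recalled before the corollary, every limit computable $1$--generic point is in fact limit computable in the jump, so $p=\lim_\J(q)$ for some computable $q\in\IN^\IN$. This $q$ lies in $\dom(g)$ with $g(q)=p$ non-computable, contradicting the previous paragraph. Hence $\lim_\J$ is not probabilistic.

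The routine parts are the bookkeeping that a (co)restriction of a probabilistic function is again probabilistic and the identification of $\delta_{2^\IN}$--names with $\delta_{\IN^\IN}$--names on $2^\IN$. The step that deserves care---and which I expect to be the only genuine obstacle---is securing the witness: I must make sure a limit computable $1$--generic exists in $2^\IN$ and that the transfer to ``limit computable in the jump'' indeed yields a \emph{computable} $q$ with $\lim_\J(q)=p$; this is exactly where the non-computability of $1$--generics and \cite[Proposition~9.4]{BBP12} carry the argument.
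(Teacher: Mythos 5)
Your proof is correct and is essentially the argument the paper intends: the corollary is stated as a consequence of \cite[Theorem~14.11]{BGH15a} combined with the preceding remark that, by \cite[Proposition~9.4]{BBP12}, every limit computable $1$--generic $p\in2^\IN$ is limit computable in the jump, hence a non-computable value of $\lim_\J$ on a computable input. Your additional bookkeeping (restricting to outputs in $2^\IN$ and noting that restrictions of probabilistic functions remain probabilistic) just makes explicit what the paper leaves implicit.
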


We can also express consequences of our result in terms of comeager sets and for this purpose
we introduce effective versions of the notion of a comeager set.

\begin{definition}[Comeager sets]
\label{def:comeager}
Let $X$ be a computable Polish space. We call a subset $A\In X$ {\em c.e.\ comeager} or {\em co-c.e.\ comeager}
if there is a computable sequence $(A_i)$ in $\AA_-(X)$ or $\AA_+(X)$, respectively, 
such that all $A_i$ are nowhere dense and $\IN^\IN\setminus A=\bigcup_{i=0}^\infty A_i$. 
We add the postfix ``in the limit'' if the corresponding sequences are in $\AA_-(X)'$ or $\AA_+(X)'$, respectively.
\end{definition}

We can now formulate the following observations.

\begin{corollary}[Comeager sets]
\label{cor:comeager}
Let $A,B\In2^\IN$.
\begin{enumerate}
\item $\IN^\IN$ is c.e.\ comeager, co-c.e.\ comeager and c.e.\ comeager in the limit.
\item If $A$ is c.e.\ comeager, then $A$ contains a dense set of computable points and all $1$--generic points.
\item If $A$ is c.e.\ comeager in the limit, then $A$ contains a dense set of $1$--generic points which are computable in the limit.
\item If $A$ is c.e.\ comeager, then $A$ contains a set $B$, which is co-c.e.\ comeager.
\item If $A$ is c.e.\ comeager or co-c.e.\ comeager, then $A$ is also c.e.\ comeager in the limit.
\item If $A,B$ are c.e.\ comeager, co-c.e.\ comeager or c.e.\ comeager in the limit, then $A\cap B$ has the respective property.
\item The set of $1$--generic points is c.e.\ comeager in the limit.
\item The set of non-computable points is a co-c.e.\ comeager set.
\item There is a co-c.e.\ comeager set $A$ that only contains points which are 1--generic, in particular the set $A$ contains 
         no points of minimal Turing degree.
\end{enumerate}
\end{corollary}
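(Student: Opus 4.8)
The plan is to treat the nine items in three clusters according to the machinery they need: pure bookkeeping on the representations $\psi_-,\psi_+,\psi_-'$ for (1), (5), (6); the (dense) Baire Category Theorem together with the description of generic points for (2), (3), (7); and boundary approximation together with an explicit positive presentation for (4), (8), (9). Throughout I work in $X=2^\IN$, as announced.

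First I would dispatch the representation-level items. For (1) the complement of the whole space is empty, and the constant sequence with $A_i=\emptyset$ is a computable name of $\emptyset$ simultaneously with respect to $\psi_-$, $\psi_+$ and $\psi_-'$, so all three properties hold at once. For (5) the identity $\id:\AA_-\to\AA_-'$ is computable (a constant sequence is a $\psi_-'$-name of its value) and $\id:\AA_+\to\AA_-'$ is computable by Fact~\ref{fact:+-} (i.e.\ $\psi_+\leq\psi_-'$); applying either map to the given covering sequence turns a $\psi_-$- or $\psi_+$-presentation into a $\psi_-'$-presentation without changing the sets, so nowhere density and the union are preserved. For (6) I would interleave the two covering sequences, using $2^\IN\setminus(A\cap B)=\bigcup_iA_i\cup\bigcup_jB_j$; the interleaved sequence is computable in whichever of $\psi_-,\psi_+,\psi_-'$ the two inputs live in (for the limit case via $[\delta^\IN]'\equiv(\delta')^\IN$), and all members stay nowhere dense.

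Next come the items asserting that comeager sets carry effective points. For (2) I would separate the two claims. A dense set of computable points in $A$ is produced by feeding the $\psi_-$-presentation of $(A_i)_i$ into the dense version $\DBCT_0$, which is computable since $\DBCT_0\equivSW\BCT_0$ (Corollary~\ref{cor:DBCT}) and $\BCT_0$ is computable (Fact~\ref{fact:BCT0-BCT1}); its output is a dense sequence of computable points. That $A$ contains every $1$-generic point is purely set-theoretic: a closed nowhere dense set satisfies $A_i=\partial A_i=\partial A_i^{\rm c}$, and since $(A_i)_i$ is computable each $A_i^{\rm c}$ is some c.e.\ open $U_{s(i)}^0$, so by Lemma~\ref{lem:generic} every $1$-generic point avoids all boundaries $\partial U_j^0$ and hence lies in $A$. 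Item (7) is the special case $2^\IN\setminus\{1\text{-generic points}\}=\bigcup_i\partial U_i^0$ of this description: each $\partial U_i^0$ is nowhere dense, $((U_i^0)^{\rm c})_i$ is computable in $\AA_-$, and $\partial\leqSW\lim$ (Fact~\ref{fact:boundary}) turns it into a computable sequence in $\AA_-'$, so the generics are c.e.\ comeager in the limit. For (3) I would combine (6) and (7): the complement of $A\cap\{1\text{-generic points}\}$ is $\bigcup_iA_i\cup\bigcup_j\partial U_j^0$, presented in $\AA_-'$ with all members nowhere dense, so $A\cap\{1\text{-generic points}\}$ is c.e.\ comeager in the limit; applying the limit-computable dense version $\DBCT_0'\equivSW\BCT_0'\equivSW\BCT_2\leqSW\lim$ (Corollary~\ref{cor:DBCT}, Theorem~\ref{thm:jumps}, Proposition~\ref{prop:BCT-upper-bound}) yields a dense sequence of points of $A$ that are $1$-generic and limit computable.

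The remaining items rely on approximating boundaries from above in positive information. For (4), given the $\psi_-$-presentation $(A_i)_i$, each $A_i$ is closed nowhere dense, so $\partial A_i=A_i$, and the boundary-approximation map $P$ of Corollary~\ref{cor:boundary-approximation} computes nowhere dense $B_i\supseteq A_i$ in $\AA_+$; then $B:=2^\IN\setminus\bigcup_iB_i$ is co-c.e.\ comeager with $B\In A$. The same device proves (9): applied to the co-c.e.\ closed sets $(U_i^0)^{\rm c}$ it yields nowhere dense $B_i\supseteq\partial U_i^0$ in $\AA_+$, and $A:=2^\IN\setminus\bigcup_iB_i\In\bigcap_i(2^\IN\setminus\partial U_i^0)$ is a co-c.e.\ comeager set consisting only of $1$-generic points (Lemma~\ref{lem:generic}); the final clause follows from the classical fact that no $1$-generic real has minimal Turing degree. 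The hard part is (8): the complement of the non-computable points is the set of computable points, which is dense but not closed, so it cannot be presented positively as a dense sequence. The trick I would use is to cover it by singletons obtained through a padding construction. For each machine $e$ let $x_e$ be the point obtained by running $e$ and repeating the last committed approximation whenever $e$ stalls, so that $x_e$ is a genuine computable point (equal to the point computed by $e$ when $e$ is total) with a uniformly computable fast-Cauchy $\delta_{2^\IN}$-name. Each $\{x_e\}$ is closed, nowhere dense in the perfect space $2^\IN$, and has the constant-sequence $\psi_+$-name $e\mapsto x_e$; moreover $\bigcup_e\{x_e\}$ is exactly the set of computable points, since total machines already realize every computable point. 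Hence the non-computable points form the co-c.e.\ comeager set $2^\IN\setminus\bigcup_e\{x_e\}$. I expect the one genuinely delicate point of the corollary to be the verification that this padding always produces a valid fast-converging name, uniformly in $e$, even on machines that never commit further output.
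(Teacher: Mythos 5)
Your arguments for items (1)--(7) and (9) are correct and essentially follow the paper's own route: (1), (5), (6) are the same representation bookkeeping (the identity into $\AA_-'$, Fact~\ref{fact:+-}, and interleaving as in Proposition~\ref{prop:BCT02-parallelizability}); (2), (3), (7) use Lemma~\ref{lem:generic} and Fact~\ref{fact:boundary} exactly as the paper does, with the minor difference that you extract the \emph{dense} sets of points from $\DBCT_0$ resp.\ $\DBCT_0'$ via Corollary~\ref{cor:DBCT}, where the paper cites \cite[Corollary~7]{Bra01a} for (2) and Proposition~\ref{prop:genericity} together with Corollary~\ref{cor:genericity} for (3); and (4), (9) rest on the boundary approximation $P$ of Corollary~\ref{cor:boundary-approximation}. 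Your version of (9) --- applying $P$ directly to the computable sequence $((U_i^0)^{\rm c})_i$ in $\AA_-(2^\IN)$ and using that $P$ outputs nowhere dense supersets of the boundary --- is in fact tidier than the paper's reference to ``(4) and (7)'', which do not literally compose, since (7) only yields comeagerness \emph{in the limit} while (4) takes plain c.e.\ comeager inputs.

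Item (8), however, is genuinely wrong. You posit a total, uniformly computable map $e\mapsto x_e$ whose range is exactly the set of computable points of $2^\IN$; no such map can exist, because $d(n):=1-x_n(n)$ would then be a computable point different from every $x_e$. Concretely, to be total your padding rule must commit the bit $x_e(n)$ after finitely many steps of simulating $e$, so on a machine that stalls for a very long time and then resumes, the committed padding bits can disagree with $\varphi_e$; hence the clause ``$x_e=\varphi_e$ whenever $\varphi_e$ is total'' fails, and this is an impossibility, not the ``delicate verification'' you anticipated. Nor can any other construction by singletons work: from a $\psi_+$-name of a singleton $\{x\}\In2^\IN$ one can compute $x$ (search the name for an entry different from $\infty$), so a computable sequence of singletons covering all computable points would again produce a computable enumeration of them, contradicting the same diagonalization. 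The paper's Example~\ref{ex:non-computable} is designed to circumvent precisely this obstacle: for each $i$ it takes the uniformly computable, time-truncated total functions $f_{it}$ and sets $A_i:=\overline{\{f_{it}:t\in\IN\}}$. Since a $\psi_+$-name is nothing but a sequence that is dense in the named set, $(A_i)_i$ is computable in $\AA_+(2^\IN)$, while the closure operation automatically places the limit $f_i=\lim_{t\to\infty}f_{it}$ into $A_i$ --- and $f_i$ equals $\varphi_i$ whenever $\varphi_i$ is total --- even though $f_i$ is not computable uniformly in $i$. Each $A_i$ is a convergent sequence together with its limit, hence countable, closed and nowhere dense, all of its members are computable points, and $\bigcup_{i=0}^\infty A_i$ is exactly the set of computable points. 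Replacing your singletons $\{x_e\}$ by these sets $A_i$ repairs item (8); the rest of your proposal stands.
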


The first half of (2) follows from \cite[Corollary~7]{Bra01a}, the second half follows from Lemma~\ref{lem:generic} (see the proof of Proposition~\ref{prop:BCT0-1GEN}),
(3) follows from the Proposition~\ref{prop:genericity} and Corollary~\ref{cor:genericity},
(4) follows from Corollary~\ref{cor:boundary-approximation} (noting that $P$ can be restricted to nowhere dense sets $A$, which satisfy $\partial A=A$),
(5) follows from Facts~\ref{fact:+-} and \ref{fact:boundary},
(6) follows from the proof of Proposition~\ref{prop:BCT02-parallelizability},
(7) follows from Lemma~\ref{lem:generic} and Fact~\ref{fact:boundary} and finally
Property (8) is the following example. Property (9) follows from (4) and (7)  (and the
well-known fact that 1--generics are not minimal). It strengthens the well-known observation that minimal Turing degrees form a meager class.

\begin{example} 
\label{ex:non-computable} 
Let $A\In2^\IN$ be the set of non-computable functions $f:\IN\to\{0,1\}$. We prove that it is a co-c.e.\ comeager set.
By $\varphi$ we denote a G\"odel numbering such that the function
$\varphi_i:\In\IN\to\{0,1\}$ is the $i$--th computable function and by $\Phi_i:\In\IN\to\IN$ we denote the corresponding time complexity.
We define $f_{it}:\IN\to\{0,1\}$ by
\[f_{it}(n):=\left\{\begin{array}{ll}
  \varphi_i(n) & \mbox{if $(\forall k\leq n)\;\Phi_i(n)\leq t$}\\
  0                & \mbox{otherwise}
\end{array}\right.\]
and we let
\[A_i:=\overline{\{f_{it}:t\in\IN\}}.\]
Clearly, $(A_i)_i$ is a computable sequence in $\AA_+(2^\IN)$.
The sequence $(f_{it})_t$ has only one cluster point $f_i:\IN\to\{0,1\}$, which is $\varphi_i$ if this function is total or
otherwise it is given by
\[f_i(n)=\left\{\begin{array}{ll}
  \varphi_i(n) &\mbox {if $(\forall k\leq n)\;k\in\dom(\varphi_i)$}\\
  0                & \mbox{otherwise}
\end{array}\right..\]
In any case $f_i$ is a total computable function and all functions $f_{it}$ are total computable too.
So all members of $A_i$ are total computable functions and if $\varphi_i$ is total, then $\varphi_i\in A_i$.
This means that $\bigcup_{i=0}^\infty A_i$ is the set of all total computable functions and $A=2^\IN\setminus\bigcup_{i=0}^\infty A_i$
is co-c.e.\ comeager.
\end{example}

We close this section with a brief discussion of a well-known weakening of $1$--genericity.
By Corollary~\ref{cor:comeager} all c.e.\ comeager sets contain all $1$--generics. 
However, the class of $1$--generics is not the largest class of points with this property.
We recall that $p\in2^\IN$ is called {\em weakly $1$--generic} in $q\in2^\IN$
if $p\in U$ for each dense set $U\In2^\IN$ that is c.e.\ open in $q$ \cite[Definition~1.8.47]{Nie09}. 

\begin{definition}[Weak 1-genericity]
By $1\dash\WGEN:2^\IN\mto2^\IN$ we denote the problem
\[1\dash\WGEN(q):=\{p:\mbox{$p$ is weakly $1$--generic in $q$}\}.\]
\end{definition}

It follows directly from this definition that every point $p\in2^\IN$ which is $1$--generic in $q$ is also
weakly $1$--generic in $q$.
Moreover, every c.e.\ comeager set $A\In2^\IN$ contains all weakly $1$--generic points. 
The following corollary captures the uniform content of this observation.

\begin{corollary}[Weak $1$--genericity]
$\BCT_0\leqSW1\dash\WGEN\leqSW1\dash\GEN$ for Cantor space.
\end{corollary}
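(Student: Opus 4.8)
The plan is to establish the two reductions separately; both turn out to be essentially definitional refinements of the argument for Proposition~\ref{prop:BCT0-1GEN}, so the corollary should follow with very little extra work.

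First I would handle the upper reduction $1\dash\WGEN\leqSW1\dash\GEN$. Here I would use the observation recorded just before the statement, namely that every point $p$ which is $1$--generic in $q$ is also weakly $1$--generic in $q$; hence $1\dash\GEN(q)\In1\dash\WGEN(q)$ for every $q\in2^\IN$. This containment yields a strong reduction with the trivial maps $K=H=\id$: given the input oracle $q$ for $1\dash\WGEN$, I would pass it unchanged to $1\dash\GEN$, and any returned point $p\in1\dash\GEN(q)$ already lies in $1\dash\WGEN(q)$ and can therefore be output directly.

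Next I would prove $\BCT_0\leqSW1\dash\WGEN$ by mirroring the proof of Proposition~\ref{prop:BCT0-1GEN}. The key point is that for a sequence $(A_i)$ of closed nowhere dense sets in $\AA_-(2^\IN)$ that is computable from a name $p$, each complement $A_i^{\rm c}=U_{s(i)}^p$ (for the computable $s$ supplied in Proposition~\ref{prop:BCT0-1GEN}) is c.e.\ open in $p$, and, since $A_i$ is nowhere dense, $A_i^{\rm c}$ is moreover dense. Thus $(A_i^{\rm c})_i$ is a sequence of dense c.e.\ open sets relative to $p$. By the very definition of weak $1$--genericity, any point $x$ that is weakly $1$--generic in $p$ belongs to each of these sets, whence
\[x\in\bigcap_{i=0}^\infty A_i^{\rm c}=2^\IN\setminus\bigcup_{i=0}^\infty A_i=\BCT_0(A_i).\]
Therefore $1\dash\WGEN(p)\In\BCT_0(A_i)$, and using the same input transformation $K$ that encodes the name $p$ as an oracle, together with $H=\id$, this gives the desired strong reduction.

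Combining the two steps yields $\BCT_0\leqSW1\dash\WGEN\leqSW1\dash\GEN$. I do not expect any genuine obstacle: both reductions rest on simple containments of solution sets. The only thing to check carefully is that nowhere--density of each $A_i$ forces its complement to be a \emph{dense} c.e.\ open set, which is precisely the feature that weak $1$--genericity exploits. In fact the present argument is slightly simpler than that of Proposition~\ref{prop:BCT0-1GEN}, since it does not require the reformulation of $\BCT_0$ through boundaries via Lemma~\ref{lem:generic}; the density of the open complements already suffices.
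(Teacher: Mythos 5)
Your proof is correct and matches the paper's approach: the paper derives the corollary exactly from the two observations you use, namely that $1$--generic points are weakly $1$--generic (giving $1\dash\WGEN\leqSW1\dash\GEN$ via identity maps) and that weakly $1$--generic points relative to a name $p$ lie in every dense c.e.\ open set relative to $p$, hence in $\BCT_0(A_i)$ when the nowhere dense sets $A_i$ are given by negative information. Your remark that the second reduction is even simpler than Proposition~\ref{prop:BCT0-1GEN} (no boundary reformulation needed) is also accurate.
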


\section{Probabilistic Properties of the Baire Category Theorem}
\label{sec:BCT-probabilistic}

In this section we continue to study $\BCT_0$ and $\BCT_2$ on Cantor space $X=2^\IN$ with respect to some probabilistic properties. 
In particular we will show that $\BCT_2\nleqW\WWKL'$ and $1\dash\GEN\leqW\WWKL'$, 
which yields a separation of $1\dash\GEN$ and $\BCT_2$. 

We recall that $\WWKL:\In\Tr\mto2^\IN$ denotes the problem that maps infinite binary trees $T\in\Tr$ to the set $\WWKL(T)=[T]$ of their infinite paths,
restricted to the set of trees with positive measure, $\dom(\WWKL)=\{T\in\Tr:\mu(T)>0\}$. Here $\mu$ denotes the usual 
uniform measure on $2^\IN$ (see \cite{BGH15a} for more details).

By $\MLR(p)$ we denote the set of all points $q\in2^\IN$ that are Martin-L\"of random relative to $p\in2^\IN$.
The Chaitin number $\Omega\in2^\IN$ is an example of a left-c.e.\ Martin-L\"of random point \cite[Theorem~6.1.3]{DH10}
(where {\em left-c.e.} means that all lower rational bounds can be computably enumerated if $\Omega$ is seen as a real number in binary notation).
We recall that $p\in2^\IN$ is called {\em low for $\Omega$} if the Chaitin number $\Omega\in2^\IN$ 
is Martin-L\"of random relative to $p$, i.e., $\Omega\in\MLR(p)$. 
This implies that the points which are low for $\Omega$ are closed downwards with respect to Turing reducibility.
Since $\Omega$ is Martin-L\"of random, it is clear that all computable $p$ are low for $\Omega$ and it is well-known that the points $p\in2^\IN$
which are low for $\Omega$ form a meager class of points.
We prove that there is even a co-c.e.\ comeager set $A\In2^\IN$ without points that are low for $\Omega$.
The proof is inspired by the proof of \cite[Theorem~3.14]{NST05}.

\begin{proposition}
\label{prop:comeager-low-for-Omega}
There is a co-c.e.\ comeager set $A\In 2^\IN$ such that no point of $A$ is low for $\Omega$.
\end{proposition}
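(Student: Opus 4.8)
The plan is to exhibit a computable sequence $(B_c)_{c\in\IN}$ of nowhere dense closed sets in positive information whose union contains every point that is low for $\Omega$, and then to let $A:=2^\IN\setminus\bigcup_{c}B_c$; by Definition~\ref{def:comeager} this $A$ is co-c.e.\ comeager, and by construction it contains no point that is low for $\Omega$. The reduction that makes the covering possible is the coincidence of lowness notions: if $p$ is low for $\Omega$, then $\Omega$ is Martin-L\"of random relative to $p$, so $p$ is low for Martin-L\"of randomness and hence $K$--trivial (see \cite{DH10,Nie09}), where $K$ denotes prefix-free complexity. Thus every such $p$ lies in $\mathcal K_c:=\{q\in2^\IN:(\forall n)\;K(q\restriction n)\le K(n)+c\}$ for some $c\in\IN$, and by the counting theorem (see \cite{DH10}) the number of $c$--$K$--trivial reals is at most $2^{c+O(1)}$, so each $\mathcal K_c$ is finite and in particular nowhere dense. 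It therefore suffices to produce, uniformly in $c$, a nowhere dense closed set $B_c\supseteq\mathcal K_c$ in positive information; then $\{q:q\mbox{ is low for }\Omega\}\In\bigcup_c\mathcal K_c\In\bigcup_cB_c$ as required.

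I would build the positive descriptions by a stagewise approximation of $K$ from above, in the spirit of Example~\ref{ex:non-computable}. Writing $K_s$ for the usual computable approximation, at stage $s$ one inspects the strings $\sigma$ with $|\sigma|\le s$ that currently look cheap, i.e.\ satisfy $K_s(\sigma)\le K_s(|\sigma|)+c$, and emits the computable point $\sigma0^\infty$. Since $K_s(\sigma)$ and $K_s(|\sigma|)$ are integer valued and eventually constant, the inequality $K_s(\sigma)\le K_s(|\sigma|)+c$ either holds for all large $s$ or fails for all large $s$; hence a string is emitted infinitely often exactly when it is genuinely cheap, and the transient false positives are emitted only finitely often. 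In particular, for $q\in\mathcal K_c$ every prefix $q\restriction n$ is genuinely cheap, so each $(q\restriction n)0^\infty$ is emitted cofinally and $q$ is a cluster point of the emitted sequence; thus $\mathcal K_c$ is contained in the closure $B_c$ that the emitted sequence describes in the sense of $\psi_+$.

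The main obstacle is to make $B_c$ nowhere dense. Here the difficulty is that the genuinely cheap strings are not of bounded width per length, so the closure of all points $\sigma0^\infty$ for cheap $\sigma$ need not have empty interior, even though the set $\mathcal K_c$ of cheap reals is finite. To remedy this I would not emit a point for every cheap string, but organise the emission into finitely many \emph{tracks}, using the bound $2^{c+O(1)}$ on the number of cheap reals to fix their number: each track greedily extends its current finite guess along cheap strings and reverts to a shorter guess as soon as that guess stops looking cheap, exactly as the traces $f_{it}$ in Example~\ref{ex:non-computable} revert to $0$. Verifying that every member of $\mathcal K_c$ is eventually followed by some track, and that the tracks stabilise so that the emitted sequence has only finitely many cluster points, is the technical heart of the argument; granting it, $B_c$ is the closure of countably many computable points with finitely many cluster points, hence a countable, and therefore nowhere dense, closed set containing $\mathcal K_c$. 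The sequence $(B_c)_c$ is then a computable sequence in $\AA_+(2^\IN)$ of nowhere dense closed sets, and $A:=2^\IN\setminus\bigcup_cB_c$ is the desired co-c.e.\ comeager set without points low for $\Omega$.
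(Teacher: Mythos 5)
Your reduction to $K$-triviality rests on a false implication, and this breaks the proof irreparably. You argue: if $p$ is low for $\Omega$, then $p$ is low for Martin-L\"of randomness, hence $K$-trivial. But ``low for $\Omega$'' in the sense of the paper ($\Omega\in\MLR(p)$) is a strictly \emph{weaker} property than ``low for Martin-L\"of randomness'' ($\MLR(p)=\MLR$); the true implication goes in the opposite direction. In fact every $2$--random point is low for $\Omega$ --- this is exactly the equivalence the paper itself invokes in the proof of Proposition~\ref{prop:WWKL-2RAN} (``$q$ is $2$--random if and only if it is Martin-L\"of random and low for $\Omega$'', \cite[Proposition~3.6.19]{Nie09}). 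Consequently the low-for-$\Omega$ points form a class of measure one, in particular an uncountable class, whereas the $K$-trivial points form a countable class. Your sets $B_c$ are, by your own design, countable closed sets (finitely many cluster points plus the emitted isolated points), so $\bigcup_c B_c$ is countable and misses uncountably many low-for-$\Omega$ points; in particular $A=2^\IN\setminus\bigcup_c B_c$ still contains $2$--random points, and these are low for $\Omega$. No repair of the ``tracks'' construction can help, because the covering target $\bigcup_c\mathcal{K}_c$ is simply the wrong (far too small) set.

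What the statement requires is a computable sequence of nowhere dense closed sets in $\AA_+(2^\IN)$ whose union contains \emph{all} low-for-$\Omega$ points, and since this class is uncountable one cannot proceed by covering an explicit countable list of reals. The paper instead forces a property on every point of the complement: it builds the sets $A_i$ (as closures of explicitly enumerated computable points, much as in Example~\ref{ex:non-computable}) so that every $p\in A$ is non-computable and the function $f(n):=\min\{k>n:p(k)\not=0\}$, which is computable from $p$, exceeds the modulus of convergence $c_\Omega(3n)$ of a left-c.e.\ approximation of $\Omega$ for infinitely many $n$. From such an $f$ one defines a $p$-computable martingale that succeeds on $\Omega$, so $\Omega\notin\MLR(p)$ and $p$ is not low for $\Omega$. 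If you want to salvage your write-up, this is the shape the argument has to take: make every point of $A$ compute something (here, a sufficiently fast-growing function) that defeats the randomness of $\Omega$ relative to that point, rather than trying to enumerate and cover the points you must avoid.
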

\begin{proof}
The Chaitin number $\Omega\in2^\IN$ is left-c.e.\ and hence we can assume that we have 
a computable sequence $(\Omega_s)_s$ in $2^\IN$ that enumerates $\Omega$ in the sense that it converges
to $\Omega$ pointwise and monotonically from below. 
Since $\Omega$ is computable in the limit, there is also a limit computable modulus of convergence
$c_\Omega:\IN\to\IN$ for the above enumeration, i.e., $c_\Omega(n)$ is the least $s$ such that
$\Omega_t|_n=\Omega|_n$ for all $t\geq s$. 
In particular there is a computable sequence $(c_{\Omega,s})_s$ that converges to $c_\Omega$ 
pointwise monotonically from below.

The plan is to construct a sequence $(A_i)_i$ of closed nowhere dense sets 
such that $A=2^\IN\setminus\bigcup_{i=0}^\infty A_i$. By adding suitable sets $A_i$, we can
achieve that $A$ contains no computable points (see Example~\ref{ex:non-computable}).
For each $p\in2^\IN$ the function $f:\In\IN\to\IN$ with
\[f(n):=\min\{k>n:p(k)\not=0\},\]
which searches the next non-zero value of $p$ is computable in $p$.
We now let $p\in A$. Since $A$ contains no computable points, the function $f$ is total.
Moreover, we assume that the set $A$ is constructed such that
\[f(n)>c_\Omega(3n)\]
holds for infinitely many $n\in\IN$. Let $M:2^*\to\IR_+$ be the martingale (see, e.g., \cite[Definition~7.1.1]{Nie09} for a precise definition) 
defined by $M(\varepsilon):=1$ and
\[M(\sigma b):=\left\{\begin{array}{ll}
  \frac{3}{2}M(\sigma) & \mbox{if $b=\Omega_{f(|\sigma|)}(|\sigma|)$}\\
  \frac{1}{2}M(\sigma) & \mbox{otherwise}
\end{array}\right.\]
for $\sigma\in2^*$ and $b\in\{0,1\}$. 
This martingale $M$ is computable in $f$ and hence in $p$, and
we claim that $M$ succeeds on $\Omega$.
If $n$ is such that $f(n)>c_\Omega(3n)$, then $\Omega|_{3n}=\Omega_{f(n)}|_{3n}$.
By definition $M$ wins the round from $n+1$ to $3n$, i.e., $M(\Omega|_i)=\frac{3}{2}M(\Omega|_{i-1})$ for $i=n+1,...,3n$ and hence
\[M(\Omega|_{3n})\geq\left(\frac{1}{2}\right)^n\left(\frac{3}{2}\right)^{2n}\geq\left(\frac{9}{8}\right)^n.\]
Thus $\sup_{n\in\IN}M(\Omega|_n)=\infty$ if $f(n)>c_\Omega(3n)$ holds for infinitely many $n$.
This means that $M$ succeeds on $\Omega$ and hence $\Omega\not\in\MLR(p)$ by \cite[Proposition~7.2.6]{Nie09}; thus $p$ is not low for $\Omega$.

We still need to construct $A$ such that it satisfies all required conditions. We define
\[A_i:=\overline{\{p\in2^\IN:(\forall n\geq i)\;p|_n0^{c_{\Omega}(3n)}\not\prefix p\}}\]
for all $i\in\IN$. Then $(A_i)_i$ is a computable sequence in $\AA_+(2^\IN)$. In order to prove this, 
we first note that for each fixed $n\in\IN$
\[(\exists s)(p|_n0^{c_{\Omega,s}(3n)}\not\prefix p)\iff p|_n0^{c_{\Omega}(3n)}\not\prefix p\]
since $(c_{\Omega,s}(3n))_s$ converges monotone to $c_\Omega(3n)$ from below.
Now we need to enumerate a sequence $(x_{ij})_j$ in $2^\IN$ which is dense in $A_i$ and this enumeration has to be uniform in $i$:
for each fixed $k$ and $n=i,...,i+k$ one can generate all words $w$ that avoid all the respective
blocks $0^{c_{\Omega,s}(3n)}$ of zeros for at least one $s$ (that can depend on $n$) and then one adds tails of $\widehat{1}$ to these words
and enumerate them into $A_i$. 
By dovetailing one can consider all $k$, $n=i,...,i+k$ and all possible $s$ for each $n$ in this way.
This procedure is computable because $(c_{\Omega,s})_s$ is a computable sequence. 

The sets $A_i$ are also nowhere dense since for each word $w$ of length $n=|w|\geq i$ we obtain $p=w\widehat{0}\not\in A_i$.
Let us suppose the contrary. Then there is a sequence $(p_k)_k$ that converges to $p$ and satisfies $p_k|_n0^{c_{\Omega}(3n)}\not\prefix p_k$ for all $n\geq i$ and $k\in\IN$.
This implies that there is some $k_0$ such that $p_k|_n0^{c_{\Omega}(3n)}=w0^{c_{\Omega}(3n)}\prefix p_k$ for all $k\geq k_0$,
which is a contradiction.

For an arbitrary $p\not\in A_i$ there exists some $n\geq i$ such that $p|_n0^{c_{\Omega}(3n)}\prefix p$.
This implies that $f(n)\geq n+c_\Omega(3n)\geq c_\Omega(3n)$ for some $n\geq i$, provided $f(n)$ is defined.
If $p\not\in A=\IN^\IN\setminus\bigcup_{i=0}^\infty A_i$, then $f$ is total (under the assumption that we have added additional $A_i$ that ensure that
$A$ contains no computable points) and
$f(n)\geq c_\Omega(3n)$ holds for infinitely many $n$, as desired.
\end{proof}

In order to use this result to separate $\BCT_2$ from $\WWKL'$, we want to show that $\WWKL'$ has a realizer
that maps computable inputs to outputs that are low for $\Omega$. For this purpose we need a (relativized version) 
of the Lemma of Ku{\v{c}}era \cite[Lemma~3]{Kuc85}, which we formulate first.

\begin{lemma}[Ku{\v{c}}era 1985]
\label{lem:Kucera}
Let $p\in2^\IN$ and let $A\In2^\IN$ be co-c.e.\ in $p$ with $\mu(A)>0$. 
Then for any $q\in\MLR(p)$ there exist $w\in2^*$ and $r\in A$ such that $q=wr$.
\end{lemma}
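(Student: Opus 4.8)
The plan is to show that the set of ``bad'' sequences,
\[
B:=\{q\in2^\IN:\text{no tail of }q\text{ lies in }A\},
\]
is covered by a Martin-L\"of test relative to $p$, so that $B\cap\MLR(p)=\emptyset$. This is exactly the assertion, since writing $q=wr$ with $r\in A$ just means that the tail $\sigma^{|w|}(q)$ of $q$ lies in $A$, where $\sigma\colon2^\IN\to2^\IN$, $\sigma(q)(n):=q(n+1)$, is the shift. Put $U:=2^\IN\setminus A$, which is c.e.\ open relative to $p$ because $A$ is co-c.e.\ in $p$. Then $B=\bigcap_{n=0}^\infty\sigma^{-n}(U)$ is precisely the set of sequences \emph{all} of whose tails lie in $U$.

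First I would disjointify $U$ in the usual way: since $U$ is c.e.\ open in $p$, there is a prefix-free set $D\In2^*$, c.e.\ in $p$, with $U=\bigcup_{w\in D}[w]$, where $[w]$ denotes the cylinder generated by $w$; hence $\mu(U)=\sum_{w\in D}2^{-|w|}=:\gamma$. As $\mu(A)>0$ we have $\gamma=1-\mu(A)<1$, so a rational bound $\gamma<1$ exists (it need not be computed). For each $k$ I then set
\[
G_k:=\bigcup\{[w_1w_2\cdots w_k]:w_1,\dots,w_k\in D\},
\]
the set of sequences beginning with a concatenation of $k$ codewords from $D$. Each $G_k$ is c.e.\ open relative to $p$, uniformly in $k$, and the sets are nested decreasingly in $k$.

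The two facts to check are $B\In\bigcap_kG_k$ and $\mu(G_k)\le\gamma^k$. For the first: if $q\in B$ then $q\in U$, so $w_1\prefix q$ for some $w_1\in D$; the tail $\sigma^{|w_1|}(q)$ again lies in $U$, so $w_2\prefix\sigma^{|w_1|}(q)$ for some $w_2\in D$, i.e.\ $w_1w_2\prefix q$; iterating gives $q\in G_k$ for every $k$. For the second, the union bound together with $\mu([w_1\cdots w_k])=2^{-(|w_1|+\cdots+|w_k|)}$ gives $\mu(G_k)\le\bigl(\sum_{w\in D}2^{-|w|}\bigr)^{k}=\gamma^{k}$. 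Passing to the subsequence $V_m:=G_{k_m}$ with $\gamma^{k_m}\le2^{-m}$ yields a Martin-L\"of test relative to $p$ with $B\In\bigcap_mV_m$; consequently no $q\in\MLR(p)$ lies in $B$, which is the claim.

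The main obstacle is exactly the estimate $\mu(G_k)\le\gamma^k$: the raw events ``$\sigma^n(q)\in U$'' for different $n$ examine overlapping bits of $q$ and are a priori correlated, so one cannot simply multiply probabilities. The device of generating $U$ by a prefix-free code $D$ and tracking successive codewords on \emph{disjoint} blocks of bits restores the needed independence and produces the clean product bound. A secondary point worth flagging is that $\gamma=\mu(U)$ need not be a computable real; we use only that some rational $\gamma<1$ exists, which already guarantees that $(V_m)$ is a legitimate $p$-Martin-L\"of test — and that is all a $p$-random $q$ must avoid.
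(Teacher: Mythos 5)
Your proof is correct and is exactly the ``standard proof'' that the paper invokes by reference (it cites \cite[Lemma~6.10.1]{DH10} and notes it relativizes): generating the complement $U=2^\IN\setminus A$ by a prefix-free set $D$ c.e.\ in $p$, bounding the $k$-fold concatenation sets by $\mu(G_k)\leq\mu(U)^k$, and extracting a Martin-L\"of test relative to $p$ covering the set of $q$ with no tail in $A$. Your handling of the relativization and of the non-uniform choice of a rational bound on $\mu(U)$ is exactly what the paper's appeal to ``the standard proof relativizes directly'' amounts to, so there is nothing to add.
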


The standard proof of the Lemma of Ku{\v{c}}era relativizes directly to the formulation given above (see for instance the proof of \cite[Lemma~6.10.1]{DH10}).
This leads to the following observation (see also \cite[Theorem~3.7]{ADR12} for an account of the situation for $2$--randomness in reverse mathematics).
We recall that a point $p\in 2^\IN$ is called {\em $(n+1)$--random} for $n\in\IN$, if $p\in\MLR(\emptyset^{(n)})$.

\begin{proposition}
\label{prop:WWKL-2RAN}
Let $n\in\IN$. Then $\WWKL^{(n)}$ has a realizer that maps computable inputs to outputs that are Turing below any fixed $(n+1)$--random.
If $n\geq 1$ then the outputs are in particular low for $\Omega$.
\end{proposition}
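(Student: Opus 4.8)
The plan is to reduce the statement to Ku\v{c}era's Lemma (Lemma~\ref{lem:Kucera}) relativized to $\emptyset^{(n)}$, after identifying the computable inputs of $\WWKL^{(n)}$ with positive-measure closed sets that are co-c.e.\ relative to $\emptyset^{(n)}$. First I would unfold the jump on the input side: since the input representation of $\WWKL^{(n)}$ is the $n$--fold jump of the tree representation, a computable name of an admissible input yields (after $n$ nested limits) a tree $T$ whose associated closed set $[T]$ is computable from $\emptyset^{(n)}$; in particular $[T]$ is a $\pO{1}(\emptyset^{(n)})$ class, i.e.\ co-c.e.\ in $\emptyset^{(n)}$, and by the domain condition $\mu([T])>0$. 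Thus every computable input produces a set $A:=[T]$ to which Lemma~\ref{lem:Kucera} applies with the oracle $p=\emptyset^{(n)}$.

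Next I would fix an arbitrary $(n+1)$--random $q$, that is, a point $q\in\MLR(\emptyset^{(n)})$. Recall that a realizer need not be computable, so we are free to let it depend on $q$. The crucial point is that a single such $q$ serves all computable inputs simultaneously: for each computable input the set $A=[T]$ is co-c.e.\ in $\emptyset^{(n)}$ with $\mu(A)>0$, so Lemma~\ref{lem:Kucera} supplies a prefix $w\in2^*$ and a path $r\in A$ with $q=wr$, whence $r\leqT q$. I would then define the realizer to output precisely this $r$ on each computable input (and to output an arbitrary path of $[T]$ on the remaining inputs, which exist since $\mu([T])>0$). By construction this is a realizer of $\WWKL^{(n)}$ all of whose values on computable inputs are Turing below the fixed $(n+1)$--random $q$, which settles the first assertion.

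It remains to treat the case $n\geq1$. Here $n+1\geq2$, and since $\emptyset'\leqT\emptyset^{(n)}$ we have $\MLR(\emptyset^{(n)})\In\MLR(\emptyset')$, so every $(n+1)$--random $q$ is in particular $2$--random. I would then invoke the standard fact that every $2$--random is low for $\Omega$: because $\Omega\leqT\emptyset'$ we have $q\in\MLR(\emptyset')\In\MLR(\Omega)$, so by van Lambalgen's Theorem the join $q\oplus\Omega$ is $1$--random; as $q$ is itself $1$--random, the converse direction of van Lambalgen yields $\Omega\in\MLR(q)$, i.e.\ $q$ is low for $\Omega$. Since lowness for $\Omega$ is downward closed under $\leqT$ and the outputs satisfy $r\leqT q$, all outputs on computable inputs are low for $\Omega$.

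The main obstacle I anticipate is the bookkeeping in the first paragraph: making the correspondence between computable names of the jumped input and $\pO{1}(\emptyset^{(n)})$ classes precise, and checking that the positive-measure condition transfers so that Lemma~\ref{lem:Kucera} is genuinely applicable uniformly in the input via a single random $q$. Once this correspondence is in hand, the remaining steps are a direct application of the relativized Ku\v{c}era Lemma together with the downward closure of lowness for $\Omega$.
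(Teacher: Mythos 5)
Your proposal is correct and follows essentially the same route as the paper: identify the computable inputs of $\WWKL^{(n)}$ with positive-measure classes that are co-c.e.\ in $\emptyset^{(n)}$, apply the relativized Ku\v{c}era Lemma with a single fixed $(n+1)$--random $q$ to get outputs Turing below $q$, and then use that $2$--randoms are low for $\Omega$ together with downward closure of that class under $\leqT$. The only cosmetic difference is that you reprove the fact that every $2$--random is low for $\Omega$ via van Lambalgen's Theorem, whereas the paper simply cites it (Nies, Proposition~3.6.19).
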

\begin{proof}
Let $p=\widehat{0}$ be the constant zero sequence. 
Fix $n\geq1$ and let $q\in\MLR(p^{(n)})$ be some $(n+1)$--random point.
The computable inputs of $\WWKL^{(n)}$ are exactly those binary trees $T$ such that the sets $A=[T]\In 2^\IN$ are of positive measure and co-c.e.\ in $p^{(n)}$.
By Lemma~\ref{lem:Kucera} for every such set $A\In2^\IN$
there exists some $r\in A$ and $w\in 2^*$ such that $q=wr$. In particular $r\leqT q$.
This means that $\WWKL^{(n)}$ has a realizer $F$ that maps computable inputs to outputs that are Turing below the $(n+1)$--random $q$.
Since $q\in2^\IN$ is $2$--random if and only if it is Martin-L\"of random and low for $\Omega$ \cite[Proposition~3.6.19]{Nie09}
and the class of points which are low for $\Omega$ is downwards closed with respect to Turing reducibility,
it follows that $F$ maps computable inputs to outputs that are low for $\Omega$, if $n\geq 1$.
\end{proof}

If we combine Propositions~\ref{prop:comeager-low-for-Omega} and \ref{prop:WWKL-2RAN}, then we obtain the following conclusion.

\begin{theorem}
\label{thm:BCT2-WWKL}
$\BCT_2\nleqW\WWKL^{(n)}$ for all $n\in\IN$.
\end{theorem}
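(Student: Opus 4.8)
The plan is to combine Propositions~\ref{prop:comeager-low-for-Omega} and~\ref{prop:WWKL-2RAN}: the former exhibits a co-c.e.\ comeager set that avoids every point which is low for $\Omega$, while the latter realizes $\WWKL^{(n)}$ (for $n\geq1$) entirely inside the points that are low for $\Omega$. Since $\BCT_2$ is forced to output points of such a co-c.e.\ comeager set, these two facts cannot coexist with a reduction. First I would reduce the claim to the case $n\geq1$. Indeed, since $\id\leqSW\lim$ (map $p$ to the constant sequence with value $p$ and apply $\lim$) and $f'\equivSW f\stars\lim$, we obtain $f\leqSW f'$ for every $f$, so in particular $\WWKL\leqW\WWKL'$. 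Hence $\BCT_2\leqW\WWKL$ would yield $\BCT_2\leqW\WWKL'$ by transitivity, and it therefore suffices to derive a contradiction from $\BCT_2\leqW\WWKL^{(n)}$ for each fixed $n\geq1$.

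So let me fix $n\geq1$ and assume, for a contradiction, that $\BCT_2\leqW\WWKL^{(n)}$ holds via computable functions $K,H$, i.e., $H\langle\id,GK\rangle\vdash\BCT_2$ for every realizer $G\vdash\WWKL^{(n)}$. Let $A\In2^\IN$ be the co-c.e.\ comeager set from Proposition~\ref{prop:comeager-low-for-Omega}, witnessed by a computable sequence $(A_i)_i$ of nowhere dense sets in $\AA_+(2^\IN)$ with $2^\IN\setminus A=\bigcup_{i=0}^\infty A_i$. Then $(A_i)_i\in\dom(\BCT_2)$ and $\BCT_2((A_i)_i)=A$, and I may fix a \emph{computable} name $p$ of $(A_i)_i$. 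Since $K$ is computable, $K(p)$ is computable and hence names a computable input of $\WWKL^{(n)}$ in exactly the sense of Proposition~\ref{prop:WWKL-2RAN}; applying that proposition, I instantiate $G$ with the specific realizer it provides, so that $GK(p)$ lies Turing below a fixed $(n+1)$--random $q$, which (as $n\geq1$) is low for $\Omega$.

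Now I would track Turing degrees. As $p$ and $H$ are computable, the name $H\langle p,GK(p)\rangle$ satisfies $H\langle p,GK(p)\rangle\leqT GK(p)\leqT q$, and the point $x:=\delta_{2^\IN}(H\langle p,GK(p)\rangle)$ computed from it is therefore also Turing below $q$. Since the class of points that are low for $\Omega$ is downward closed under $\leqT$, the point $x$ is low for $\Omega$. On the other hand, the realizer condition gives $x\in\BCT_2((A_i)_i)=A$, and by Proposition~\ref{prop:comeager-low-for-Omega} no point of $A$ is low for $\Omega$. This contradiction establishes $\BCT_2\nleqW\WWKL^{(n)}$ for every $n\geq1$, and together with the reduction of the first paragraph the case $n=0$ follows as well.

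The only genuinely delicate point is the bookkeeping that converts the degree-theoretic content of Propositions~\ref{prop:comeager-low-for-Omega} and~\ref{prop:WWKL-2RAN} into a Weihrauch non-reduction. Concretely, one must observe that because the chosen input $(A_i)_i$ is computable, the ``input modifier'' $H$---which under ordinary $\leqW$ has access to the original input $p$---cannot inject any additional power beyond what it receives from $GK(p)$, so that the final output really stays Turing below the fixed $(n+1)$--random $q$. Everything else is a routine matching of the ``computable inputs'' of $\WWKL^{(n)}$ with the computable name $K(p)$, together with the downward closure of lowness for $\Omega$ and the elementary monotonicity $\WWKL\leqW\WWKL'$ used to absorb the case $n=0$.
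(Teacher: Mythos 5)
Your proof is correct and follows essentially the same route as the paper: fix the co-c.e.\ comeager set of Proposition~\ref{prop:comeager-low-for-Omega} as a computable input to $\BCT_2$, instantiate the realizer of $\WWKL^{(n)}$ from Proposition~\ref{prop:WWKL-2RAN} whose output on that computable input is low for $\Omega$, and contradict the fact that $A$ contains no low-for-$\Omega$ points, handling $n=0$ by the monotonicity $\WWKL\leqW\WWKL'$. The only (immaterial) difference is bookkeeping: the paper runs the argument for $\WWKL^{(n+1)}$, $n\in\IN$, and then deduces the $\WWKL$ case, while you split into $n\geq1$ and $n=0$ up front.
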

\begin{proof}
Let us assume that $\BCT_2\leqW\WWKL^{(n+1)}$ for some $n\in\IN$. Then there are computable $H,K$ such that $H\langle\id,GK\rangle$ is a realizer
of $\BCT_2$ whenever $G$ is a realizer of $\WWKL^{(n+1)}$.
By Proposition~\ref{prop:comeager-low-for-Omega} there is a co-c.e.\ comeager set $A=\IN^\IN\setminus\bigcup_{i=0}^\infty A_i$
that contains no point $r\in A$ that is low for $\Omega$ and such that $(A_i)_i$ is a computable input for $\BCT_2$
with $A=\BCT_2((A_i)_i)$. 
Hence there is a computable name $p$ of $(A_i)_i$ such that $K(p)$ is a computable name for some input of  $\WWKL^{(n+1)}$.
By Proposition~\ref{prop:WWKL-2RAN} there exists a realizer $G$ of $\WWKL^{(n+1)}$ that maps this computable input $K(p)$
to an output $q=G(p)$ which is low for $\Omega$. Hence $r=H\langle p,q\rangle\leqT q$ is also low for $\Omega$ 
and $r\in A$, which is a contradiction.
Since $\BCT_2\nleqW\WWKL'$ and $\WWKL\leqSW\WWKL'$, it follows that also $\BCT_2\nleqW\WWKL$.
\end{proof}

This yields the following obvious question.

\begin{question} Is $\BCT_2$ probabilistic?
\end{question}

$\BCT_0\leqSW\WWKL^{(n)}$ would imply $\BCT_2\leqSW\BCT_0'\leqSW\WWKL^{(n+1)}$ by Proposition~\ref{prop:BCT-upper-bound}, 
which contradicts Theorem~\ref{thm:BCT2-WWKL}. Hence we also obtain the following corollary.

\begin{corollary}
\label{cor:BCT0-WWKL}
$\BCT_0\nleqSW\WWKL^{(n)}$ for all $n\in\IN$.
\end{corollary}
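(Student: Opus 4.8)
The plan is to argue by contradiction, exploiting the fact that Theorem~\ref{thm:BCT2-WWKL} already rules out $\BCT_2\leqW\WWKL^{(m)}$ for every $m\in\IN$, together with two structural facts available from earlier in the paper: that the jump operation is monotone with respect to strong Weihrauch reducibility $\leqSW$ (as recalled in the subsection on jumps), and that $\BCT_2\leqSW\BCT_0'$ by Proposition~\ref{prop:BCT-upper-bound}. Concretely, I would suppose for contradiction that $\BCT_0\leqSW\WWKL^{(n)}$ for some $n\in\IN$ and derive the forbidden reduction $\BCT_2\leqW\WWKL^{(n+1)}$.

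The execution proceeds in three short moves. First I would apply the jump to the assumed reduction: since $f\mapsto f'$ is monotone for $\leqSW$, the hypothesis $\BCT_0\leqSW\WWKL^{(n)}$ yields $\BCT_0'\leqSW(\WWKL^{(n)})'$, and by the definition of the $n$-fold jump we have $(\WWKL^{(n)})'=\WWKL^{(n+1)}$, so $\BCT_0'\leqSW\WWKL^{(n+1)}$. Second, prepending $\BCT_2\leqSW\BCT_0'$ from Proposition~\ref{prop:BCT-upper-bound} and using transitivity of $\leqSW$ gives $\BCT_2\leqSW\WWKL^{(n+1)}$. Third, since strong reducibility implies ordinary reducibility, this weakens to $\BCT_2\leqW\WWKL^{(n+1)}$, which contradicts Theorem~\ref{thm:BCT2-WWKL} instantiated at $m=n+1$. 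As $n$ was arbitrary, we conclude $\BCT_0\nleqSW\WWKL^{(n)}$ for all $n\in\IN$.

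The argument is essentially bookkeeping, and there is no genuine obstacle once the two imported facts are in place. The only points requiring care are that the assumed reduction must be \emph{strong} in order to survive the jump step, which is precisely why the statement is phrased with $\nleqSW$ rather than $\nleqW$ (ordinary $\leqW$ is not monotone under jumps, so the same contradiction could not be drawn); and dually, that the contradiction is extracted at the level of $\leqW$, so that Theorem~\ref{thm:BCT2-WWKL} — which is stated for ordinary reducibility — is directly applicable. Observing these two matching directions is the whole content of the proof.
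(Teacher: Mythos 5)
Your proof is correct and is essentially identical to the paper's argument: the paper likewise assumes $\BCT_0\leqSW\WWKL^{(n)}$, uses jump monotonicity of $\leqSW$ together with $\BCT_2\leqSW\BCT_0'$ from Proposition~\ref{prop:BCT-upper-bound} to derive $\BCT_2\leqSW\WWKL^{(n+1)}$, and then contradicts Theorem~\ref{thm:BCT2-WWKL}. Your explicit remark about why the hypothesis must be a \emph{strong} reduction (so it survives the jump) while the contradiction is extracted at the $\leqW$ level is exactly the point the paper leaves implicit.
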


In the next step we want to provide some probabilistic upper bound for $1\dash\GEN$.
For this purpose we need $(1-*)\dash\WWKL'$, where 
\[(1-*)\dash\WWKL(T_n)_n=\bigsqcup_{n=0}^\infty(1-2^{-n})\dash\WWKL(T_n)\] 
was introduced in \cite{BGH15a}
and is based on $\varepsilon-\WWKL(T)$, which is $\WWKL$ restricted to $\dom(\varepsilon\dash\WWKL)=\{T:\mu([T])>\varepsilon\}$ for every $\varepsilon\in[0,1]$ (this problem
was first introduced in \cite{DDH+16}). Intuitively speaking, $(1-*)\dash\WWKL$ is the problem that, given a sequence of trees $(T_n)$ with $\mu([T_n])>1-2^{-n}$ finds
an infinite path $p\in T_n$ in one of these trees together with the information $n$ that indicates which tree it is.

A classical Theorem of Kurtz \cite{Kur81} states that every $2$--random degree bounds a $1$--generic degree. Using the fireworks argument\footnote{The fireworks technique is due to Andrei Rumyantsev and Alexander Shen \cite{RS14}; the fact that it can be used to prove that every $2$--random degree bounds a $1$--generic has been communicated to us by Laurent Bienvenu; see also \cite{BP14}.}
we prove the following result, which can be seen as a uniform version of the
Theorem of Kurtz. Alternatively, one could approach this result using
the technique recently introduced by Barmpalias, Day and Lewis-Pye \cite[Theorem~4.10]{BDL14}.

\begin{theorem}[Uniform Theorem of Kurtz]
\label{thm:1GEN-WWKL}
$1\dash\GEN\leqW(1-*)\dash\WWKL'$.
\end{theorem}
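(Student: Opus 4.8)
The plan is to realize the classical fireworks construction uniformly. The central object is a $q$--computable ``banker'' strategy $\Phi$ which, given a name of $q$ and an auxiliary random source $r\in2^\IN$, outputs a candidate point $\Phi^q(r)\in2^\IN$; the correctness of $\Phi^q(r)$ (that it is $1$--generic in $q$) will hold for all $r$ in a suitable large class $G_n$. I would set up the Weihrauch reduction so that the output modifier is essentially $H=\Phi$, which is computable and uses only a name of $q$ and the oracle answer, while the input modifier $K$ produces, in the limit, a sequence of trees $(T_n)_n$ whose path sets are these good classes $G_n=[T_n]$. Since $(1-*)\dash\WWKL'$ takes its tree input in the jump representation and returns a path $r\in[T_n]$ together with the index $n$, it suffices that $(T_n)_n$ is computable from the jump $q'$ and that $\mu([T_n])>1-2^{-n}$; applying $\Phi^q$ to the returned path then yields a $1$--generic point in $q$.

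Next I would describe the banker itself. Building $\Phi^q(r)$ bitwise, I handle the genericity requirements $R_i$, ``meet or avoid $U_i^q$'', in turn, maintaining a current commitment $\sigma\prefix\Phi^q(r)$. For $R_i$ the strategy reads from $r$ a random ``courage'' $c_i$ and then watches the enumeration of $U_i^q$: whenever the current avoidance candidate is threatened (some enumerated part of $U_i^q$ enters its cone) the banker dodges to a fresh sub-cone, but it is only willing to dodge $c_i$ times before it commits irrevocably to its current sub-cone as an (unverified) avoidance. This is exactly the point where a purely $q$--computable strategy cannot confirm $w2^\IN\cap U_i^q=\emptyset$ (a condition co-c.e.\ in $q$), so the commitment is a guess. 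The fireworks estimate of Rumyantsev and Shen \cite{RS14} (see also \cite{BP14}) shows that, with the courages drawn from an appropriate distribution, the probability that the banker exhausts its courage on some $R_i$ is below $2^{-n}$; hence the good set $G_n$ of sources on which every commitment is safe satisfies $\mu(G_n)>1-2^{-n}$. Crucially, the failure event ``the banker exhausts its courage on $R_i$'' depends on the total number of threats mounted by $U_i^q$, which is a $\sO{2}$ condition relative to $q$; consequently $G_n$ is co-c.e.\ in the jump $q'$, so $G_n=[T_n]$ for a tree $T_n$ computable from $q'$. This is precisely what forces the jump onto $\WWKL$: the good sources can only be described one level up, matching the fact that the fireworks source must be random relative to $q'$ rather than merely to $q$.

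Finally I would assemble the reduction. Since $q'$ is limit computable from $q$, the map $K$ computes from a name of $q$ a jump name of $(T_n)_n$, a legitimate instance of $(1-*)\dash\WWKL'$ by the measure bound $\mu([T_n])>1-2^{-n}$. Given any oracle answer $(n,r)$ with $r\in[T_n]=G_n$, the output map $H$, which has access to the original name of $q$ and to $(n,r)$, computes $\Phi^q(r)$; by the choice of $G_n$ this point is $1$--generic in $q$, and note that $H$ needs only $q$ and $r$, never $q'$, so the extraction is genuinely computable. As the solver may return a path through any one of the trees, and every $G_n$ consists of safe sources, the reduction $1\dash\GEN\leqW(1-*)\dash\WWKL'$ succeeds regardless of the returned index. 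The main obstacle is entirely contained in the second paragraph: fixing the courage distribution and proving the fireworks measure estimate $\mu(G_n)>1-2^{-n}$, together with the bookkeeping that pins the complexity of $G_n$ at the co-c.e.-in-$q'$ level (rather than in $q$), which is what makes the jump on $\WWKL$ both necessary and sufficient.
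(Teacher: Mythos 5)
Your high-level architecture is the same as the paper's: the output map $H$ runs a $q$--computable probabilistic strategy on $q$ and the returned random source $r$, the input map $K$ produces a jump-name of a sequence of trees whose paths are the ``good'' sources with measure greater than $1-2^{-n}$, and the jump on $\WWKL$ enters precisely because these trees are only limit computable from $q$. The gap is in the engine itself: your banker's gamble is on the wrong side, and the fireworks estimate you invoke does not apply to it. Your banker answers each threat by dodging and, after $c_i$ dodges, commits irrevocably to its current sub-cone as an unverified avoidance; as you say yourself, the failure event is then that the total number of threats exceeds $c_i$. But that event is not small: the adversary is the fixed enumeration of $U_i^q$ and controls the number of threats, so if it mounts $T$ threats then \emph{every} courage value $c_i<T$ fails, giving failure probability $\min(T,N)/N$ when courages are drawn from $\{1,\dots,N\}$ --- which is $1$ when $T=\infty$, not $2^{-n-i}$. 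Concretely, against the dense open set $U_i^q=\bigcup_{k}0^k1\,2^\IN$, enumerated with enough delay that $0^k1$ appears only after the construction has extended its position beyond length $k+1$, an avoid-and-commit strategy is forced to dodge along $0^\omega$ (avoiding $\{0^j1:j\leq k\}$ leaves only the cone $0^{k+1}2^\IN$), is threatened infinitely often, and never gets a compatible meet; so it fails for \emph{every} courage value: its output avoids $U_i^q$ but has no avoidance witness, hence is not $1$--generic. This is not an artifact of the example: a $1$--generic must \emph{meet} every dense open set that is c.e.\ in $q$, so a correct strategy must at some point deliberately steer into $U_i^q$; dodging away from it and hoping for an accidental meet cannot yield the per-requirement bound your tree construction needs.

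The gamble that the fireworks argument of Rumyantsev and Shen actually supports --- and the one the paper uses --- is dual to yours. Between threats, avoidance is the free, riskless default: the current position is the tentative witness, and if threats simply stop, avoidance holds with no commitment ever made. The risky move, taken at the randomly chosen $n_i$-th threat, is to \emph{block} the entire construction and wait for $U_i^q$ to enumerate an extension of the current position, i.e.\ to bet on meeting (a c.e.\ event), not on avoiding (a co-c.e.\ event). This bet is self-limiting: if at the $n_i$-th threat no extension of the current position lies in $U_i^q$, then no $(n_i+1)$-th threat can ever occur, because threats are by definition such extensions; hence larger courage values never reach their gamble (and win by avoidance), while smaller ones gamble successfully (the next threat witnesses that the awaited extension exists). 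So at most one value of $n_i$ in $\{1,\dots,2^{k+i+1}\}$ is bad, giving failure probability at most $2^{-k-i-1}$ per requirement and $2^{-k}$ in total. Your bet, ``no further threat will come,'' has the opposite structure --- its failures accumulate over all courage values below the threat count --- and no choice of courage distribution repairs this. If you reverse the gamble in this way, the rest of your outline (the bookkeeping that places the good sources at level $q'$, the measure bound $\mu([T_n])>1-2^{-n}$, and the assembly of $H$ and $K$ with $H$ using only $q$ and $r$) does go through essentially as in the paper.
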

\begin{proof}
Given a $q\in2^\IN$ we want to find some $p\in2^\IN$ that is $1$--generic relative to $q$ with the help of $(1-*)\dash\WWKL'$.
We describe a probabilistic algorithm that computes such a $p$ with probability greater than $1-2^{-k}$ from a given $q\in2^\IN$ and $k\in\IN$.
Let $(U_i^q)_i$ be an enumeration of c.e.\ open sets relative to $q$; we can assume that each $U_i^q$
has the form $U_i^q=\bigcup_{j=0}^\infty w_{ij}2^\IN$ with words $w_{ij}\in\{0,1\}^*$. The goal is to satisfy the
property $R_i:p\not\in\partial U_i^q$ for all $i\in\IN$, which can be reformulated as
\[R_i:(\exists j)\;w_{ij}\prefix p\mbox{ or }(\exists w\prefix p)(\forall j)(w2^\IN\cap w_{ij}2^\IN=\emptyset).\]
This can be achieved by a probabilistic algorithm that uses another ``random'' input $r\in2^\IN$ which we consider as a sequence $r=n_0n_1n_2...$ of blocks $n_i\in\{0,1\}^*$
of length $|n_i|=k+i+1$. Each such block $n_i$ is identified with a number
$n_i\in\{1,...,2^{k+i+1}\}$. 

{\bf Algorithm.}
Upon input of $q,r$ and $k$ the probabilistic algorithm works in steps $s=0,1,2,...$ and computes a 
sequence $p$ by producing longer and longer prefixes $v_s$ of $p$.
Initially the prefix $v_0$ is the empty sequence. We also use two 
sequences of natural number programming variables $(c_i)_i$ and $(l_i)_i$,
which are initially all set to zero. In stage 
$s=\langle i,j\rangle$ we perform the following steps, provided $R_i$ has not 
yet been declared as satisfied (otherwise we do nothing):
\begin{enumerate}
\item If $w_{ij}\prefix v_s$, then property $R_i$ is declared as satisfied 
and we set $v_{s+1}:=v_s$.
\item If $v_s\prefix w_{ij}$, then
we set $v_{s+1}:=w_{ij}$ and property $R_i$ is declared as satisfied.
\item 
If $v_s$ is incompatible with $w_{ij}$,
but has a common prefix with it of length greater or equal to $l_i$, then
we consider this as an ``event'' and we do the following:
\begin{enumerate}
\item
We increase
the ``event counter'' $c_i:=c_i+1$ and we set the ``length bound'' to $l_i:=|v_s|$.
\item If $c_i=n_i$, then we consider this as a ``critical event'' and we increase $j$ step by step until we 
find some $j$ with $v_s\prefix w_{ij}$ in which case we set $v_{s+1}:=w_{ij}$
and property $R_i$ is declared as satisfied (if no suitable $j$ is found, then the
algorithm loops here forever).
\end{enumerate}
\end{enumerate}

{\bf Verification.}
We note that the algorithm produces an infinite output $p=\sup_s v_s$ if it never
happens to loop forever in case of a critical event in Step (3)(b). In this case all properties $R_i$ are satisfied,
either because there is some $j$ with $w_{ij}\prefix p$ (in which case $R_i$ will be declared to be satisfied) or because the event
counter $c_i$ never reaches the critical value $n_i$, which means that
there exists some $w\prefix p$ such that $w$ is incompatible with $w_{ij}$
for all $j$. 

{\bf Success probability.}
 The algorithm is unsuccessful if and only if there is an $i$ such that
an infinite loop in Step (3)(b) occurs. We need to calculate the 
probability that this happens for some arbitrary $r\in2^\IN$ (seen as a sequence $(n_i)_i$). 
The key observation for this calculation is to understand what counts as an ``event'': 
whenever an event happens and $c_i$ is increased,
then the next event will happen only if there is a $w_{ij}$ that extends the
current output $v_s$.
The unsuccessful case happens if the event counter reaches $c_i=n_i$
for some $i$ and an infinite loop is reached since 
no suitable $j$ is found afterwards in Step (3)(b). 
Let us fix such an $i$ and the corresponding $n_i$ that leads to an infinite loop.
Then we claim that no other value of $n_i\in\{1,...,2^{k+i+1}\}$ can lead to an infinite loop:
\begin{enumerate}
\item
Since the event counter eventually reached the value $c_i=n_i$ no infinite loop
can happen in Step (3)(b) for a smaller value of $n_i$ due to the key observation above.
\item
Since Step (3)(b) enters an infinite loop for the current value of $n_i$, the event
counter could never reach a larger value of $n_i$ due to the key observation above.
\end{enumerate}
Since at most one value $n_i\in\{1,...,2^{k+i+1}\}$ can lead to an infinite loop,
the failure probability for our fixed $i$ and $n_i$ is $\leq2^{-k-i-1}$
and hence the total failure probability for $r$ is $\leq\sum_{i=0}^\infty2^{-k-i-1}=2^{-k}$.
So the probability that the ``random input'' $r$ is successful is $\geq1-2^{-k}$.

This probabilistic algorithm describes a computable function $H$ that
computes $p=H\langle q,\langle k,r\rangle\rangle$, given $q,r\in2^\IN$ and $k\in\IN$. 

We still need to describe
a computable function $K$ that given $q$ and $k$ computes a name for a sequence $(T_m)_m$ of binary trees that converges to some binary tree $T=\lim_{m\to\infty}T_m$
such that $[T]$ is the set of successful random advices $r\in2^\IN$.
For this purpose, we let $T_m$ initially be the full binary tree of all paths
of length $h_m=\sum_{i=0}^m2^{k+i+1}$ (i.e., all the paths $v\in T_m$ contain
suitable values $n_0,...,n_m$). 
Then we simulate the above algorithm for input $q,k$ and each path $v\in T_m$
of full length $h_m$ as a prefix of $r$ for all stages $s=\langle i,j\rangle\leq m$.
This bound implies $i\leq m$ and hence the simulation will never require an $n_i$
which is not included in $v$.
If the algorithm runs through without ever entering a search for $j$ in some
Step (3)(b) that runs longer than for $m$ values of $j$, then $v$ is kept in the 
tree $T_m$, otherwise $v$ is shortened to length $h_s$ for the corresponding
stage $s$ at which the problem occurred.
If a random advice $r\in2^\IN$ is successful, then each critical
search that it enters in some Step (3)(b) will terminate after finitely many steps,
hence longer and longer prefixes of $r$ will be included in the sequence
$(T_m)_m$ and so $r\in[\lim_{m\to\infty}T_m]$. On the other hand,
if $r\in[\lim_{m\to\infty}T_m]$, then each critical Step (3)(b) will eventually
terminate for $r$. Thus $T:=\lim_{m\to\infty}T_m$ is a tree such 
that $[T]$ contains exactly the successful $r\in2^\IN$.

Thus the desired reduction $1\dash\GEN\leqW(1-*)\dash\WWKL'$ is given by the
computable functions $H,K$; more precisely, $q\mapsto H\langle q,G\langle K\langle q,0\rangle,K\langle q,1\rangle,...\rangle\rangle$
is a realizer of $1\dash\GEN$ whenever $G$ is a realizer of $(1-*)\dash\WWKL'$.
\end{proof}

We note that Corollaries~\ref{cor:genericity} and \ref{cor:BCT0-WWKL} show that the reduction in 
Theorem~\ref{thm:1GEN-WWKL} cannot be improved to a strong one. 
Theorem~\ref{thm:1GEN-WWKL} yields 
\[1\dash\GEN\leqW(1-*)\dash\WWKL'\leqSW\WWKL'\] 
and hence 
we obtain the following corollary with the help of Theorem~\ref{thm:BCT2-WWKL}.

\begin{corollary}
$\BCT_2\nleqW1\dash\GEN$.
\end{corollary}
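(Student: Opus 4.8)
The plan is to derive the separation purely by transitivity, using the two substantial results that precede this corollary as black boxes. The key point is that $1\dash\GEN$ has already been located strictly below the jump of Weak Weak K\H{o}nig's Lemma, whereas $\BCT_2$ has already been shown not to reduce to that same problem; so the two cannot be comparable in the claimed direction.

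First I would recall the reduction chain noted immediately after Theorem~\ref{thm:1GEN-WWKL}, namely
\[1\dash\GEN\leqW(1-*)\dash\WWKL'\leqSW\WWKL',\]
which gives $1\dash\GEN\leqW\WWKL'$ by transitivity of $\leqW$ (using that strong reducibility implies ordinary reducibility). Next I would argue by contradiction: suppose $\BCT_2\leqW1\dash\GEN$. Combining this with the previous reduction and again invoking transitivity of $\leqW$, I obtain $\BCT_2\leqW\WWKL'$. Since $\WWKL'=\WWKL^{(1)}$, this directly contradicts Theorem~\ref{thm:BCT2-WWKL} specialised to $n=1$, which asserts $\BCT_2\nleqW\WWKL^{(n)}$ for all $n\in\IN$. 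Hence the assumption is false and $\BCT_2\nleqW1\dash\GEN$.

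There is no genuine obstacle at this stage: the entire force of the argument was already absorbed into the two earlier theorems. Theorem~\ref{thm:BCT2-WWKL} is where the real work lies, resting on Proposition~\ref{prop:comeager-low-for-Omega} (the construction of a co-c.e.\ comeager set containing no point that is low for $\Omega$) together with Proposition~\ref{prop:WWKL-2RAN} (relativised Ku{\v c}era coding of random points below sufficiently random advices); and Theorem~\ref{thm:1GEN-WWKL} supplies the fireworks-based uniform Kurtz upper bound on $1\dash\GEN$. The corollary itself is therefore a one-line consequence, and the only care needed is to track that the jump index matches: the upper bound for $1\dash\GEN$ involves exactly a single jump of $\WWKL$, and the non-reducibility for $\BCT_2$ covers $\WWKL^{(n)}$ for every $n$, in particular for $n=1$.
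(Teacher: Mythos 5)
Your proof is correct and is essentially identical to the paper's own argument: the paper derives the corollary from the chain $1\dash\GEN\leqW(1-*)\dash\WWKL'\leqSW\WWKL'$ together with Theorem~\ref{thm:BCT2-WWKL}, exactly as you do. The jump-index bookkeeping you note is also right, since Theorem~\ref{thm:BCT2-WWKL} covers $\WWKL^{(n)}$ for every $n$, in particular $\WWKL'$.
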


Since $\BCT_0\leqSW1\dash\GEN$ by Corollary~\ref{cor:genericity}
we also obtain the following consequence of Theorem~\ref{thm:1GEN-WWKL} and Corollary~\ref{cor:BCT0-WWKL}.

\begin{corollary}
$1\dash\GEN\nleqSW\WWKL^{(n)}$ for all $n\in\IN$.
\end{corollary}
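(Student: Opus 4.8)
The plan is to argue by contradiction, reducing the claim to a non-reducibility of $\BCT_0$ that has already been established. First I would suppose, toward a contradiction, that $1\dash\GEN\leqSW\WWKL^{(n)}$ for some $n\in\IN$. The key additional input is the lower bound on $1\dash\GEN$ recorded in Corollary~\ref{cor:genericity}, namely $\BCT_0\leqSW1\dash\GEN$ (stated for Cantor space, which by Corollary~\ref{cor:perfect-polish} is no real restriction for the problems involved).

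Next I would simply compose the two reductions. Since strong Weihrauch reducibility $\leqSW$ is transitive, the assumed reduction together with $\BCT_0\leqSW1\dash\GEN$ yields
\[\BCT_0\leqSW1\dash\GEN\leqSW\WWKL^{(n)},\]
and hence $\BCT_0\leqSW\WWKL^{(n)}$. But this contradicts Corollary~\ref{cor:BCT0-WWKL}, which asserts $\BCT_0\nleqSW\WWKL^{(n)}$ for every $n\in\IN$. Therefore the assumption fails and $1\dash\GEN\nleqSW\WWKL^{(n)}$ holds for all $n\in\IN$.

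I do not expect any genuine obstacle in the argument itself, as it is a one-step transitivity chase: all of the analytic content has already been invested upstream. The substantive work lies in the chain establishing $\BCT_0\nleqSW\WWKL^{(n)}$ (Corollary~\ref{cor:BCT0-WWKL})—which rests on the observation that $\BCT_0\leqSW\WWKL^{(n)}$ would, by monotonicity of the jump under strong reductions, force $\BCT_2\equivSW\BCT_0'\leqSW\WWKL^{(n+1)}$, contradicting Theorem~\ref{thm:BCT2-WWKL}—and in the genericity lower bound $\BCT_0\leqSW1\dash\GEN$ of Corollary~\ref{cor:genericity}. The only point worth a moment's care is that the reduction must be \emph{strong}: it is essential that both cited reductions are stated for $\leqSW$, so that transitivity of $\leqSW$ (rather than merely of $\leqW$) applies. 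Since both $\BCT_0\leqSW1\dash\GEN$ and $\BCT_0\nleqSW\WWKL^{(n)}$ are indeed strong, the contradiction goes through and the corollary follows.
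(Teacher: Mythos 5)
Your proof is correct and matches the paper's own (implicit) argument: the paper derives this corollary exactly by combining $\BCT_0\leqSW1\dash\GEN$ from Corollary~\ref{cor:genericity} with $\BCT_0\nleqSW\WWKL^{(n)}$ from Corollary~\ref{cor:BCT0-WWKL}, using transitivity of $\leqSW$. Your added remarks on why the reductions must be strong and how Corollary~\ref{cor:BCT0-WWKL} was obtained are accurate and consistent with the paper.
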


We can easily derive probabilistic upper bounds for $\BCT_1$ and $\BCT_3$.
For one, $\BCT_1\equivSW\C_\IN\leqSW\PC_{\IN\times2^\IN}\leqSW\WWKL'$
by \cite[Theorem~9.3]{BGH15a} and by Fact~\ref{fact:BCT0-BCT1} and hence $\BCT_3\leqSW\BCT_1'\leqSW\WWKL''$
by Proposition~\ref{prop:BCT-upper-bound}.

\begin{corollary}
\label{cor:BCT13-WWKL}
$\BCT_1\leqSW\WWKL'$ and $\BCT_3\leqSW\WWKL''$ for any fixed computable Polish space $X$.
\end{corollary}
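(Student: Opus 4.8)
The plan is to assemble the statement entirely from strong reduction facts already recorded in the excerpt, since both bounds follow by chaining cited strong reductions together with a single application of the jump. No new construction is needed.

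First I would treat $\BCT_1 \leqSW \WWKL'$. By Fact~\ref{fact:BCT0-BCT1} we have $\BCT_1 \equivSW \C_\IN$, so it suffices to bound $\C_\IN$. Here I would invoke \cite[Theorem~9.3]{BGH15a}, which supplies the chain $\C_\IN \leqSW \PC_{\IN\times2^\IN} \leqSW \WWKL'$. Since $\leqSW$ is transitive, composing $\BCT_1 \equivSW \C_\IN$ with this chain yields $\BCT_1 \leqSW \WWKL'$ at once.

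Next I would derive the bound on $\BCT_3$ by one jump. Proposition~\ref{prop:BCT-upper-bound} gives $\BCT_3 \leqSW \BCT_1'$. Because the jump operation $f\mapsto f'$ is monotone with respect to strong Weihrauch reducibility \cite{BGM12}, the reduction $\BCT_1 \leqSW \WWKL'$ established in the previous step lifts to $\BCT_1' \leqSW (\WWKL')'$. Since $(\WWKL')' = \WWKL''$ by definition of the iterated jump, transitivity gives $\BCT_3 \leqSW \BCT_1' \leqSW \WWKL''$, as desired.

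There is essentially no obstacle beyond bookkeeping: every step is either a cited strong reduction, an instance of transitivity of $\leqSW$, or an application of jump monotonicity. The only point requiring a moment's care is to confirm that the reductions feeding into the jump genuinely hold in the \emph{strong} sense rather than merely in the ordinary one, so that the jump step is legitimate; this is precisely what \cite[Theorem~9.3]{BGH15a} and Fact~\ref{fact:BCT0-BCT1} provide. Finally, I would note that no dependence on the particular computable Polish space $X$ enters, since $\BCT_1$ and $\BCT_3$ are classified independently of $X$ through Fact~\ref{fact:BCT0-BCT1} and Proposition~\ref{prop:BCT-upper-bound}, which justifies the phrase ``for any fixed computable Polish space $X$''.
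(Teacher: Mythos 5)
Your proposal is correct and follows essentially the same route as the paper, which likewise chains $\BCT_1\equivSW\C_\IN\leqSW\PC_{\IN\times2^\IN}\leqSW\WWKL'$ via Fact~\ref{fact:BCT0-BCT1} and \cite[Theorem~9.3]{BGH15a}, and then obtains $\BCT_3\leqSW\BCT_1'\leqSW\WWKL''$ from Proposition~\ref{prop:BCT-upper-bound} together with jump monotonicity of $\leqSW$. Your explicit remark that the input reductions must be strong for the jump step to be legitimate is precisely the point the paper leaves implicit, so nothing is missing.
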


\section{Conclusion}

The diagram in Figure~\ref{fig:diagram-BCT} shows different versions of the Baire Category Theorem for computable perfect Polish spaces
in the Weihrauch lattice together with their neighborhood.\footnote{For all problems not defined in this paper, the reader is referred to \cite{BGH15a,BHK15}.}
The solid lines indicate strong Weihrauch reductions against the direction of the arrow and 
the dashed lines indicate ordinary Weihrauch reductions. 

\begin{figure}[tb]
\begin{tikzpicture}[scale=0.8,auto=left,every node/.style={fill=black!15}]
\useasboundingbox  rectangle (18,21);
\def\rvdots{\raisebox{1mm}[\height][\depth]{$\huge\vdots$}};

% Ellipse & Rectangle

%\draw[style={fill=black!1}]  (10,9.5) ellipse (10 and 9);
\draw[style={fill=black!1}]  (1,14) rectangle (17,1);
\draw[style={fill=black!3}]  (13,3.15) ellipse (3 and 2);

 % Nodes
  \node (LIMS) at (3,19) {$\lim'$};
  \node (CLR) at (5,18) {$\CL_\IR\equivSW\C_\IR'$};
  \node (BWT) at (3,17) {$\CL_{2^\IN}\equivSW\WKL'$};
 % \node (WBWT) at (9,19) {$\WBWT_\IR$};
  %\node (PAS) at (6,19) {$\PA'$};
  %\node (COH) at (12.2,19) {$\SBWT_\IR\equivSW\COH$};
  %\node (COHP) at (15,20) {$\COH_+$};
  %\node (COHD) at (9,18) {$[\COH]$};
  %\node (JD) at (2,14) {$\J_\DD$};
  \node (CLN) at (9.4,17) {$\CL_\IN\equivSW\BCT_1'\equivSW\BCT_3$};
  \node (LIM) at (3,13) {$\lim\equivSW\J$};
  \node (LIMJ) at (14,11) {$\lim_\J$};
  \node (LIMD) at (11.2,10) {$\lim_\Delta$};
  \node (L) at (3,12) {$\Low\equivSW(\J^{-1})'$};
  %\node (LBT) at (2,13) {$\LBT$};
  \node (CR) at (5,11) {$\C_\IR$};
  \node (WKL) at (3,10) {$\C_{2^\IN}\equivSW\WKL$};
  %\node (DNC3) at (4,11) {$\DNC_3$};
 % \node (DNCN) at (4.5,8.5) {$\DNC_\IN$};
  %\node (PA) at (3.5,7.7) {$\PA$};
  \node (WWKLS) at (5,16) {$\WWKL'$};
  \node (1SWWKLS) at (7.2,15) {$(1-*)\dash\WWKL'$};
  \node (WWKLD) at (6.3,10) {$\WWKL^\Delta$};
  \node (WWKL) at (5,9) {$\WWKL$};
  \node (1SWWKL) at (5,8) {$(1-*)\dash\WWKL$};
  %\node (MLR) at (10,9) {$\MLR$};
  \node (1GEN) at (14,7) {$1\dash\GEN$};
  \node (1WGEN) at (14,6) {$1\dash\WGEN$};
 % \node (KPT) at (10,7.7) {$\KPT$};
  \node (BCT0) at (14,3) {$\BCT_0$};
  \node (BCT0S) at (14,9) {$\BCT_0'\equivSW\BCT_2\equivSW\pO{1}\G$};
 % \node (BCT0S) at (14,11) {$\BCT_0'$};
 % \node (HYP) at (12.2,7) {$\HYP$};
  \node (CN) at (9.4,9) {$\C_\IN\equivSW\BCT_1$};
  \node (LPO) at (8.2,8) {$\LPO$};
  \node (LLPO) at (5,7) {$\C_2\equivSW\LLPO$};
  \node (ACCN) at (5,6) {$\ACC_\IN$};
%  \node (BII) at (8.5,6.5) {$\BII$};
 % \node (NON) at (10,5) {$\NON$};
  \node (ID) at (11.2,4) {$\id$};
 % \node (JM) at (12,2) {$\J^{-1}$};
%  \node (JD1) at (15.1,5) {$\J_\DD^{-1}$};
  %\node (JIT) at (15.1,6) {$\JIT$};
  %\node (cKid) at (16.1,7) {$\cc_{\emptyset'}\times\id$};

 % Straight lines
 
\foreach \from/\to in 
{  LIMS/CLR,
   CLR/BWT,
   CLR/CLN,
   CLN/CN,
   BWT/LIM,
   LIM/L,
   BWT/WWKLS,
   WWKLS/WWKLD,
   WWKLS/1SWWKLS,
   L/CR,
   CR/WKL,
   CR/WWKLD,
   WKL/WWKL,
   WWKLD/WWKL,
   WWKL/1SWWKL,
   LLPO/ACCN,
   WWKLD/CN,
   CN/LPO,
   LPO/LLPO,
   L/LIMJ,
   LIMJ/BCT0S,
   LIMJ/LIMD,
   LIMD/CN,
   BCT0S/1GEN,
   1WGEN/BCT0,
   1GEN/1WGEN,
   ID/BCT0,
   LIMD/ID,
 CR/LIMD}
\draw [->,thick] (\from) -- (\to);

 % Straight dashed lines
 
 \foreach \from/\to in 
 {ACCN/ID}
 \draw [->,dashed] (\from) -- (\to);

% Curved lines

\draw [->,thick,looseness=1.5] (WKL) to [out=220,in=170] (ID);
\draw [->,thick,looseness=1.5] (WWKL) to [out=180,in=180] (LLPO);
\draw [->,thick,looseness=1.5] (1SWWKL) to [out=0,in=0] (ACCN);
%\draw [->,thick,looseness=0.7] (JD) to [out=220,in=150] (PA);
%\draw [->,thick,looseness=0.6] (LIMJ) to [out=330,in=90] (JIT);
\draw [->,thick,looseness=0.6] (1SWWKLS) to [out=290,in=10] (1SWWKL);
%\draw [->,thick,looseness=1.2] (PAS) to [out=180,in=150] (PA);
%\draw [->,thick,looseness=1.8] (LIM) to [out=0,in=90] (cKid);

% Curved dashed lines

%\draw [->,dashed,looseness=1.1] (HYP) to [out=10,in=240] (1WGEN);
\draw [->,dashed,looseness=1.6] (1SWWKLS) to [out=340,in=23] (1GEN);
%\draw [->,dashed,looseness=1.2] (ACCN) to [out=340,in=120] (ID);
%\draw [->,dashed,looseness=1.9] (JD1) to [out=270,in=350] (JM);

% Labels 

\node[style={fill=black!1}] at (4,2) {limit computable = $\SO{2}$};
\node[style={fill=black!3}] at (13,2) {computable = $\SO{1}$};
\draw  (0.5,20) rectangle (17.5,0.5);
\node[style={fill=none}] at (14,19) {double limit computable = $\SO{3}$};
\end{tikzpicture}
\ \\[-0.5cm]
\caption{The Baire Category Theorem for perfect computable Polish spaces in the Weihrauch lattice. }
\label{fig:diagram-BCT}
\end{figure}

\bibliographystyle{plain}
\bibliography{C:/Users/Vasco/Dropbox/Bibliography/lit}

\end{document}